\newtheorem{theorem}{Theorem}[section]
\newtheorem{lemma}[theorem]{Lemma}
\newtheorem{proposition}[theorem]{Proposition}
\newtheorem{corollary}[theorem]{Corollary}
\newtheorem{definition}[theorem]{Definition}
\numberwithin{equation}{section}
\begin{document}

\title{Integral factorial ratios} 
 \author{K. Soundararajan} 
 \address{Department of Mathematics \\ Stanford University \\
450 Serra Mall, Bldg. 380\\ Stanford, CA 94305-2125}
\email{ksound@math.stanford.edu}
\thanks{} 

 \maketitle

\section{Introduction}  

\noindent This paper is motivated by the following problem: classify all tuples of natural numbers $a_1$, $\ldots$, $a_K$, and 
$b_1$, $\ldots$, $b_L$ with $\sum_{i} a_i = \sum_{j} b_j$ such that 
$$ 
\frac{(a_1 n)! (a_2 n)! \cdots (a_K n)!}{(b_1 n)! (b_2 n)! \cdots (b_L n)!} \in {\Bbb N} 
$$ 
for all natural numbers $n$.   The condition $\sum a_i = \sum b_j$ ensures that these ratios grow only exponentially 
with $n$, so that the power series formed with these coefficients is a hypergeometric series.  Naturally one may assume that $a_i \neq b_j$ for all 
$i$ and $j$.  As will become apparent shortly, one may also assume that $L>K$ (else there are no solutions), and assume that the gcd of 
$(a_1, \ldots, a_K, b_1, \ldots, b_L) $ is $1$ (and we call tuples with this gcd condition {\sl primitive}).

We have in mind the situation when $D = L-K$ is a fixed positive integer, and the problem is to determine all possible integral 
factorial ratios as above with $D$ more factorials in the denominator than the numerator.   Multinomial coefficients are a natural source of 
examples, but there are also exotic examples such as Chebyshev's observation (used in obtaining estimates for the number of primes below $x$) that 
$$ 
\frac{(30n)! n!}{(15n)! (10n)! (6n)!} 
$$ 
is an integer for all $n$.   In full generality, the problem of classifying these integral factorial ratios is wide open, and only the case $D=1$ 
has been resolved completely.  

\begin{theorem} \label{thm1.1} In the case $D=1$, there are three infinite families of primitive integral factorial ratios: 
$$
\frac{((a+b)n)!}{(an)! (bn)!}, \ \  \frac{(2an)! (2bn)!}{(an)! (bn)! ((a+b)n)!}, \ \ \frac{(2an)! (bn)!}{(an)! (2bn)! ((a-b)n)!}, 
$$ 
where $a$ and $b$ are coprime natural numbers, and with $a>b$ in the third family.  Besides these there are fifty two sporadic examples 
of primitive integral factorial ratios: $29$ examples with $K=2$ and $L=3$; $21$ examples with $K=3$ and $L=4$; and $2$ examples with $K=4$ and $L=5$.  
\end{theorem}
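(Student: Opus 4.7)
My plan is based on Landau's classical criterion, which states that $\prod_{i} (a_i n)!/\prod_{j} (b_j n)!$ lies in $\mathbb N$ for every $n$ if and only if the step function
$$ f(x) := \sum_{i=1}^K \lfloor a_i x \rfloor - \sum_{j=1}^L \lfloor b_j x \rfloor $$
is non-negative for all real $x$ (equivalently, for $x \in [0,1]$). This reduces the classification problem to a concrete statement about a single $\mathbb Z$-valued step function.

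The key simplification when $D = L - K = 1$ is a reflection identity: because $\lfloor t \rfloor + \lfloor -t \rfloor = -1$ for $t \notin \mathbb Z$, and because $\sum a_i = \sum b_j$ implies $f(x+1) = f(x) + 1$, one obtains $f(x) + f(1 - x) = 1$ outside a finite set of rationals in $[0,1]$. Since $f \geq 0$ and is $\mathbb Z$-valued, this forces $f(x) \in \{0, 1\}$ almost everywhere. Thus $f$ is (a.e.) the indicator function of a set $E \subset [0, 1]$ of measure $1/2$, a finite union of intervals whose endpoints are rationals $p/q$ with $q$ dividing some $a_i$ or $b_j$. At any such reduced fraction $p/q$ in $(0,1)$, the function $f$ changes by $m(q) := \#\{i : q \mid a_i\} - \#\{j : q \mid b_j\}$, independently of the numerator $p$; it follows that $m(q) \in \{-1, 0, +1\}$ for every $q \geq 2$, and the pattern of signs as $p/q$ runs along the Farey sequence must trace out the boundary of $E$.

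These rigid constraints on the divisor pattern translate into a combinatorial game: one classifies all profiles $(m(2), m(3), m(4), \ldots)$ of signed multiplicities consistent with $f \in \{0,1\}$, together with the primitivity condition and $\sum a_i = \sum b_j$. The generic solutions fall into the three stated one-parameter families (the multinomial family and the two Chebyshev-type families), which one verifies against Landau's inequality by a direct calculation. The principal obstacle, I expect, is bounding $K$, $L$, and $\max(a_i,b_j)$ for the remaining \emph{sporadic} tuples: one must extract from the simultaneous constraints $m(q) \in \{-1,0,+1\}$ at small $q$ (together with the global measure $|E| = 1/2$) effective upper bounds ruling out $K \geq 5$ and confining smaller $K$ to a finite search region. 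Once the problem is reduced to such a region, a direct (computer-aided) enumeration produces the $29 + 21 + 2 = 52$ sporadic examples. The infinite-family skeleton is conceptually clean; it is this quantitative separation of the sporadic tail from the families that I anticipate will be the hardest part.
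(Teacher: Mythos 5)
Your setup is sound and coincides with the paper's starting point: Landau's criterion, the reflection identity forcing $f(x)\in\{0,1\}$ almost everywhere (note a small slip: since $\sum a_i=\sum b_j$ the function $f$ is $1$-periodic, so $f(x+1)=f(x)$, not $f(x+1)=f(x)+1$; the identity $f(x)+f(1-x)=1$ still follows from $f(-x)=D-f(x)$), and the observation that the jump of $f$ at a reduced fraction $p/q$ equals $m(q)=\#\{i:q\mid a_i\}-\#\{j:q\mid b_j\}\in\{-1,0,1\}$. But everything after that is a statement of intent rather than an argument, and the part you defer is precisely the entire content of the theorem. You give no mechanism for proving $K+L\le 9$, and no mechanism for bounding the sizes of the entries of the sporadic tuples so that a finite search terminates. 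The constraint $m(q)\in\{-1,0,1\}$ for all $q\ge2$, even combined with the measure condition $|E|=1/2$, is far too weak to deliver either bound on its own: it is a countable family of local conditions with no evident way to force finiteness of $K$ or of $\max_i a_i$, and no classification of ``profiles'' is actually carried out. Verifying that the three families satisfy Landau's inequality is the easy direction; the theorem is the converse.

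The paper closes exactly this gap by replacing the pointwise condition $f\in\{0,1\}$ with the equivalent $L^2$ statement $N(\frak a)=\int_0^1 \frak a(x)^2\,dx=1/4$ for the list $\frak a=[a_1,\ldots,a_K,-b_1,\ldots,-b_L]$, where $N(\frak a)=\frac1{12}\sum_{i,j}(a_i,a_j)^2/(a_ia_j)$. Two ingredients then do the work you are missing: first, a recursive lower bound for $G(n)$, the minimal norm of a nondegenerate list of length $n$, built on the notion of $k$--separatedness and the norm decomposition of Proposition \ref{prop2.2}; this yields $G(n)>1/4$ for odd $n\ge 11$ and hence $K\le 4$. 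Second, Proposition \ref{prop2.4} shows that a primitive list of length $n$ that is at most $k$--separated has all entries dividing an explicit integer, which together with the splitting of highly separated lists into shorter lists of controlled norm reduces the sporadic search in each of the lengths $5$, $7$, $9$ to an explicit finite computation (Sections 6, 8, 10). Without analogues of these two quantitative steps your plan cannot be completed.
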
  

Theorem \ref{thm1.1} was established by Bober \cite{Bober}, building on a key observation of Rodriguez--Villegas \cite{RV}.  Rodriguez--Villegas noted 
that when $D=1$ the condition that the factorial ratios are integers is equivalent to the associated hypergeometric function being algebraic.  In turn, the 
condition that the hypergeometric function is algebraic amounts to the finiteness of its associated monodromy group.  The work of Beukers and Heckman \cite{BH} 
describes all hypergeometric series ${}_nF_{n-1}$ with finite monodromy group, and thus enables the classification given in Theorem \ref{thm1.1}.   At the 2018 Bristol 
conference {\sl Perspectives on the Riemann Hypothesis}, Rodriguez--Villegas asked for a more direct proof of Theorem \ref{thm1.1}, and one of our main goals is to describe an elementary, self-contained proof.  
In addition, we can impose some restrictions on the possible integral factorial ratios in the situation $D>1$.  

\begin{theorem} \label{thm1.2}  Suppose $D\ge 1$ and put $L=K+D$.  

1.  There are no integral factorial ratios with $K$ terms in the numerator and $L$ in the denominator unless
$$ 
K+L \le (1+o(1)) \frac{18e^{2\gamma}}{\pi^2} D^2 (\log \log (D+2))^2. 
$$

2.  The points $(a_1,\ldots, a_K,b_1,\ldots, b_L) \in {\Bbb R}^{K+L}$ corresponding to an integral factorial ratio lie on finitely many 
vector subspaces of ${\Bbb R}^{K+L}$ of dimension at most $3D^2-1$.  

3.  In the special case $D=2$, there are no integral factorial ratios if $K+L \ge 82$, and if $K+L \ge 76$ there are at most finitely many primitive 
solutions. 
\end{theorem}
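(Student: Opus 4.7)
My plan rests on Landau's classical criterion: $\prod_i(a_in)!/\prod_j(b_jn)!\in\mathbb N$ for every $n$ if and only if
\[
h(x):=\sum_i\lfloor a_i x\rfloor-\sum_j\lfloor b_j x\rfloor\ge 0\quad\text{for all }x\ge 0,
\]
equivalently (since $\sum a_i=\sum b_j$) that $g(x):=\sum_j\{b_jx\}-\sum_i\{a_ix\}\ge 0$. The function $g$ is $1$-periodic, integer-valued, and piecewise constant with jumps of size $\pm 1$ at the points $k/a_i$ and $k/b_j$, and it satisfies $\int_0^1 g\,dx=D/2$. All three parts of the theorem should follow from a careful analysis of this step function.

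For part 1, the strategy is to exploit Landau's inequality at the rationals $1/p$ for primes $p$: $g(1/p)\ge 0$ rearranges to $\sum_j(b_j\bmod p)\ge \sum_i(a_i\bmod p)$. On a ``typical'' residue class, each $a_i\bmod p$ and $b_j\bmod p$ averages to $(p-1)/2$, so $g(1/p)\approx D(p-1)/(2p)$; summing a weighted version of $g(1/p)\ge 0$ over primes $p\le M$ and invoking Mertens' third theorem $\prod_{p\le X}(1-1/p)^{-1}\sim e^\gamma\log X$ furnishes a lower bound of order $(D/2)\log\log M$ on a cumulative Landau deficit, while an upper bound in terms of $K+L$ follows from the trivial estimate $|g|\le L$ and from the total variation of $g$. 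Comparing the two bounds and optimising over $M$ should give $K+L\ll D^2(\log\log D)^2$. Extracting the sharp constant $18e^{2\gamma}/\pi^2$ is the main obstacle: it requires applying Mertens-type estimates essentially twice (once for the multiset $\{a_i\}$ and once for $\{b_j\}$) and keeping careful track of the factor $\zeta(2)=\pi^2/6$ that appears when passing from a sum indexed by all integers to one indexed by primes.

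For parts 2 and 3, I would stratify the set of integral factorial ratios by the combinatorial type of $g$---that is, by the jump pattern of $g$ on $[0,1]$ together with the congruence classes of the $a_i,b_j$ modulo a bounded set of small primes. For each fixed type, the Landau constraints reduce to a finite linear system in the $a_i,b_j$, carving out a vector subspace of $\mathbb R^{K+L}$. The dimension bound $3D^2-1$ in part 2 should emerge from a counting argument: the ``large'' entries of the tuple must pair up into at most $O(D)$ multinomial-like blocks carrying $D$ parameters each, while the ``small'' entries range over a finite set that contributes only finitely many subspaces overall. Part 3 is then a quantitative refinement of part 1 specialised to $D=2$, with explicit tracking of constants and an exhaustive enumeration of residue patterns modulo $2,3,5,7,\ldots$; the thresholds $82$ and $76$ should fall out of a finite (if lengthy) case analysis, once the sharp version of part 1 is in hand.
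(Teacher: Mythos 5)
Your sketch does not contain the mechanism that actually drives all three parts of this theorem, namely a single quantity that is bounded \emph{above} by $O(D^2)$ for every integral factorial ratio and bounded \emph{below} in terms of $K+L$. In the paper this is the $L^2$ norm $N(\frak a)=\int_0^1\frak a(x)^2\,dx=\frac{1}{12}\sum_{i,j}(a_i,a_j)^2/(a_ia_j)$ of the list $\frak a=[a_1,\ldots,a_K,-b_1,\ldots,-b_L]$: integrality forces $\frak a(x)\in\{-D/2,\ldots,D/2\}$, hence $N(\frak a)\le D^2/4$, and Sections 2, 3 and 12 supply the matching lower bound $G(K+L)\sim\frac{\pi^2}{72e^{2\gamma}}\frac{K+L}{(\log\log(K+L))^2}$ (Theorem \ref{thm1.4}), from which part 1 is immediate. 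Your proposal instead evaluates Landau's inequality at the points $1/p$; each such evaluation is one linear inequality $g(1/p)\ge 0$, and a weighted sum of nonnegative quantities over $p\le M$ is simply $\ge 0$ --- you have not said what the ``cumulative Landau deficit'' is, nor why it should be bounded above in terms of $K+L$ while also being at least of order $(D/2)\log\log M$, and the heuristic that $a_i\bmod p$ ``averages to $(p-1)/2$'' is not a statement about a fixed tuple. The constants $e^{2\gamma}$ and $\pi^2$ enter precisely through Mertens applied to the product $\prod_{p\le x}\frac{p-1}{p+1}$ arising from the recursive lower bound for $G(n)$ via $k$--separatedness (Proposition \ref{prop3.4}), together with a matching upper-bound construction from Liouville-weighted divisor lists (Lemma \ref{lem12.1}); nothing in your outline produces either half of that asymptotic.

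Parts 2 and 3 have the same gap. Stratifying by the ``combinatorial type of $g$'' does not work as stated: the jump pattern of $g$ depends on the actual integers $a_i,b_j$, so there are infinitely many types, and the claim that the large entries ``pair up into $O(D)$ blocks carrying $D$ parameters each'' is unsupported and would not yield the specific bound $3D^2-1$. The paper's route is through the quantity $\widetilde G(n;d)$ of \eqref{1.5}: the inductive splitting $\frak a=B\frak b+C\frak c$ of a highly separated list, with $B$ and $C$ pinned down by $s(\frak a)=0$ (Lemma \ref{lem5.2}), shows that any list of length $n$ and sum zero with norm below $\widetilde G(n;d)$ lies in one of finitely many subspaces of dimension at most $d$, and Theorem \ref{thm1.3} gives $\widetilde G(K+L;3D^2-1)\ge(3D^2+1)/12>D^2/4$. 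Part 3 is likewise not a residue-pattern enumeration but the explicit numerical bounds $G(n)>1$ for $n\ge 82$ and $G(n;1)>1$ for $n\ge 76$ from Corollary \ref{cor3.5}. In short, your starting point (Landau's criterion) is the right one, but the proposal is missing the central object and the entire quantitative apparatus built on it, so as written it does not constitute a proof of any of the three parts.
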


Part 1 of Theorem \ref{thm1.2} refines earlier results of Bell and Bober \cite{BB}  (see also the unpublished work of Bombieri and Bourgain \cite{BoBo}) who 
obtained the bound $K+L \ll D^2 (\log D)^2$; this established a conjecture of Borisov \cite{Bor}.   In his senior thesis advised by me, Schmerling \cite{Sch} made 
improvements to Bell and Bober's argument and obtained $K+L \ll D^2(\log \log D)^2$.  Our proof of part 1 of Theorem \ref{thm1.2} follows a different, more combinatorial, approach and forms 
part of our proof of Theorem \ref{thm1.1}.   In the case $D=1$, Bell and Bober established the bound $K+L < 112371$, and Schmerling obtained the much better bound $K+L <43$; 
we will first see here that $K+L <11$ and then give the classification in the remaining cases.  In the case $D=2$, the bound of Bell and Bober is $K+L < 502827$, while Schmerling 
gives $K+L < 202$.  Here it is expected that $K+L \le 18$, so that our bound too is far from the truth.  

Regarding part 2 of Theorem \ref{thm1.2}, Bober \cite{Bober2} established that the points $(a_1,\ldots, a_K, b_1, \ldots, b_L)$ are exactly the points with 
natural number coordinates in a finite union of vector subspaces of ${\Bbb R}^{K+L}$.  We have given an upper bound for the dimension of these spaces.  
It is likely that our upper bound is not close to the truth, which perhaps is linear in $D$.

 We now describe our framework for establishing these results.   In general, all known approaches to establishing that factorial 
 ratios are integral proceed by showing that for every prime $p$, the power of $p$ dividing the numerator is at least the power dividing the 
 denominator.  This readily leads to the equivalent formulation (going back to Landau) that for all real $x$ one must have 
 $$ 
 f(x;{\bold a}, {\bold b}) := \sum_{i=1}^{K} \lfloor a_i x\rfloor - \sum_{j=1}^{L} \lfloor b_j x \rfloor \ge 0. 
 $$ 
 This equivalent condition makes clear why we may assume that the gcd of $(a_1,\ldots, a_K, b_1, \ldots, b_L)$ equals $1$. 
 Further, since (apart from some rational numbers) $f(-x;{\bold a}, {\bold b}) = (L-K) - f(x;{\bold a}, {\bold b})$ we see that $D=L-K$ must be 
 positive and that $f(x;{\bold a}, {\bold b})$ must take values in $\{ 0, 1, \ldots, (L-K)=D\}$.  
 
 The work of Bell and Bober, and Schmerling now proceeds by obtaining lower bounds (in terms of $K+L$) 
 for $\int_0^1 (f(x; {\bold a}, {\bold b}) -D/2)^2 dx$.   We will also study this quantity, but will develop a different 
 inductive approach to bounding it from below.  In order to set up our induction framework, we generalize the problem a little.

Throughout we let $\{x\} = x -\lfloor x\rfloor$ denote the fractional part of $x$, and let  
$$ 
\psi(x) = 1/2 - \{ x\}    
$$ 
denote the ``saw-tooth function".    Let ${\frak a} = [a_1, \ldots, a_n]$ denote a list of $n$ non-zero integers.  Associated to such a list is the $1$--periodic function 
\begin{equation} 
\label{1.1} 
{\frak a}(x) = \sum_{j=1}^{n} \psi(a_j x). 
\end{equation} 
    Our interest is in the norm of such a list, defined by 
\begin{equation} 
\label{1.2} 
N([a_1,\ldots,a_n]) = N({\frak a}) = \int_0^1 {\frak a}(x)^2 dx. 
\end{equation}  
We call a list {\sl degenerate} if it contains both $a$ and $-a$ for some integer $a$.   In such a case, the function ${\frak a}(x)$ and the norm $N(\frak a)$ 
are unaltered if both $a$ and $-a$ are removed from the list.   Thus, henceforth we shall restrict attention to {\sl non-degenerate} lists, and for such a list $\frak a$ 
we let $\ell(\frak a)$ denote the length of this list, and $s(\frak a)$ denote the sum of the elements in the list, $a_1 +\ldots +a_n$.   Given a non-zero integer 
$k$, we denote by $k\frak a$ the list obtained by multiplying all elements of $\frak a$ by $k$.  Note that $(k\frak a)(x) = {\frak a}(kx)$, and that $N(k\frak a) = N(\frak a)$.  
We say that a (non-degenerate) list is {\sl primitive} if the elements of the list have gcd $1$, and in studying the norm we may clearly restrict attention to primitive lists. 
We will also treat all permutations of the entries of a list as being the same; clearly the associated functions and norms are unaltered.

If $(a_1,\ldots,a_K, b_1,\ldots, b_L)$ is a tuple giving rise to an integral factorial ratio, then we associate to this tuple the 
list of length $K+L$ given by $\frak a = [a_1,\ldots,a_K, -b_1, -b_2, \ldots, -b_L]$.  Note that, in our earlier notation, 
$$ 
\frak a(x) = f(x;\bold a, \bold b) - D/2, 
$$ 
so that the factorial ratio being integral is equivalent to $\frak a(x)$ taking the values from $-D/2, -D/2+1, \ldots, D/2$.  In particular one must 
have $N(\frak a) \le D^2/4$, and this motivates our study of lower bounds for the norms of lists in general.   Note that the lists 
arising from factorial ratios also satisfy $s(\frak a) = 0$, but for the inductive argument we have in mind it is convenient to allow more 
general lists.

Given $n\ge 1$, a central object in our study is 
\begin{equation} 
\label{1.3} 
G(n) = \inf \{ N(\frak a): \ \ \ell(\frak a) = n \}, 
\end{equation} 
where the infimum is taken over all non-degenerate lists of length $n$.  To prove Theorem \ref{thm1.1} we shall find $G(n)$ 
for small $n$, as well as classify those lists with suitably small norm.   More generally, given $d<n$ we shall also consider 
\begin{equation} 
\label{1.4} 
G(n;d) = \sup_{\substack{V_1, \ldots, V_r \\ \text{dim}(V_j) \le d}} \inf \{ N(\frak a): \ \ell(\frak a)=n ,  \ \frak a \notin V_1 \cup \ldots \cup V_r \}. 
\end{equation} 
Here the supremum is taken over all finite collections of vector subspaces $V_1$, $\ldots$, $V_r$ of ${\Bbb R}^n$ with dimension at most 
$d$, and the infimum is taken over all non-degenerate $\frak a$ of length $n$ not lying in one of these subspaces.  Note that $G(n;0)$ is 
simply $G(n)$, and that $G(n;1)$ is the infimum of $N(\frak a)$ after removing any finite number of primitive lists of length $n$. If $d\ge n$ then 
we set $G(n;d) =\infty$.   In addition, we define the analogues of $G(n)$ and $G(n;d)$ when restricted to the hyperplane of  lists that sum to $0$.  Thus, for $n>d$, we 
put 
\begin{equation} 
\label{1.5} 
{\widetilde G}(n;d) = \sup_{\substack{V_1, \ldots, V_r \\ \text{dim} V_j \le d } } \inf \{ N(\frak a), \ s(\frak a)= 0, \ \frak a\notin V_1 \cup \ldots \cup V_r \},
\end{equation} 
with ${\widetilde G}(n):= {\widetilde G}(n;0)$.  

\begin{theorem} \label{thm1.3} For $0\le d\le n-1$ we have 
$$ 
G(n;d) = \min_{\substack{ \ell_1 + \ldots + \ell_{d+1} = n\\ \ell_j \ge 1}} \big( G(\ell_1) + \ldots + G(\ell_{d+1}) \big).  
$$ 
Further for $0 \le d\le n-2$ we have 
$$ 
{\widetilde G}(n;d) \ge \min\Big( G(n;d+1), \min_{\substack{\ell_1 +\ldots + \ell_{d+1}=n \\ \ell_j \ge 1} } {\widetilde G}(\ell_1) +\ldots + {\widetilde G}(\ell_{d+1}) \Big). 
$$ 
In particular, for all $2d+2 \ge n \ge d+1$ we have $G(n;d) = (d+1)/12$, and if $n\ge 2d+3$ then $G(n;d) \ge d/12 + 1/9$.  Further if $d+2 \le n \le 2d+4$ then 
${\widetilde G}(n;d) \ge (d+2)/12$, while if $n\ge 2d+5$ then ${\widetilde G}(n;d) \ge d/12 +7/36$.  
\end{theorem}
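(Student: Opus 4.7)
The plan is to establish the inductive formulas in parts (i) and (ii) first, and then unwind them against baseline values to obtain the specific numerical bounds in part (iii).

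For the \emph{upper bound} $G(n;d) \le \min \sum G(\ell_j)$, I would use a scaling construction. Given a composition $n = \ell_1 + \cdots + \ell_{d+1}$ and near-extremal lists $\frak{a}_j$ of length $\ell_j$, form the concatenations $\frak{a}(T) = [T_1 \frak{a}_1, T_2 \frak{a}_2, \ldots, T_{d+1} \frak{a}_{d+1}]$ for positive integer scalings $T_j$. The Fourier identity
$$\int_0^1 \psi(ax)\psi(bx)\,dx = \operatorname{sign}(ab)\,\frac{\gcd(|a|,|b|)^2}{12\,|ab|}$$
shows that the cross terms $\langle T_i \frak{a}_i,\, T_j \frak{a}_j\rangle$ decay like $1/(T_i T_j)$ when the $T_j$ are taken pairwise coprime and grow, so $N(\frak{a}(T)) \to \sum_j N(\frak{a}_j)$. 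Moreover the family $\{\frak{a}(T)\}$ lies in a $(d{+}1)$-dimensional subspace $W \subset \mathbb{R}^n$; each $V_i$ of dimension at most $d$ cuts $W$ in a proper subspace, so a generic positive integer choice of $(T_1,\ldots,T_{d+1})$ places $\frak{a}(T)$ outside every $V_i$. Sending the $T_j$ to infinity yields the bound.

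For the \emph{lower bound} $G(n;d) \ge \min \sum G(\ell_j)$, I would proceed by induction on $d$ via the equivalent recursion $G(n;d) \ge \min_{1 \le \ell \le n-d}(G(\ell) + G(n-\ell;\,d-1))$. The central tool is a prime-decomposition identity, obtained by splitting the Fourier expansion $N(\frak{a}) = (4\pi^2)^{-1}\sum_{m\ne 0} S(m)^2/m^2$ (where $S(m) = \sum_{j:\,a_j\mid m} a_j$) according to whether $p\mid m$, and simplifying using the decomposition $S(pm') = S^{(\ne p)}(m') + p\,S^{(p)/p}(m')$:
$$ N(\frak{a}) = N(\frak{a}^{(\ne p)}) + N\bigl(\tfrac{1}{p}\frak{a}^{(p)}\bigr) + \tfrac{2}{p}\bigl\langle \frak{a}^{(\ne p)},\, \tfrac{1}{p}\frak{a}^{(p)}\bigr\rangle. $$
If one can select a prime $p$ so that both sub-lists are non-empty and the cross term is non-negative, induction immediately yields $N(\frak{a}) \ge G(\ell) + G(n-\ell;d-1)$, since the pull-back of the $V_i$'s to the coordinates of $\frak{a}^{(p)}/p$ has dimension at most $d-1$. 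For the sum-zero variant the same identity is used while tracking the sums of the sub-lists: if the split can be arranged so that both sub-lists sum to $0$, induction gives the $\sum \widetilde G(\ell_j)$ bound; if some sub-list has nonzero sum, the constraint $s(\frak{a})=0$ imposes one extra linear relation linking the pieces, which effectively raises the allowable excluded-subspace dimension by one and yields the $G(n;d+1)$ bound. The main obstacle will be the residual case where no choice of prime produces a viable split: here one must prove a rigidity statement showing that such recalcitrant lists are forced to lie in one of finitely many $d$-dimensional subspaces (determined by the combinatorial divisibility patterns of the $a_j$'s), which may then be absorbed into the $V_i$'s. I expect this rigidity analysis to be the technical heart of the proof.

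Once the inductive formulas are in place, the \emph{numerical bounds} in part (iii) follow by unwinding. Direct computation gives $G(1) = G(2) = 1/12$ (attained by $[1]$ and $[1,-2]$). A case analysis, or an iteration of the splitting lemma specialized to length-three lists, will give $G(n) \ge 1/9$ for $n \ge 3$, together with the parallel estimates $\widetilde G(3),\, \widetilde G(4) \ge 1/6$ and $\widetilde G(n) \ge 7/36$ for $n \ge 5$. Inserting these into the partition formula finishes the argument: when $d+1 \le n \le 2d+2$, every composition may be taken with all parts of size at most $2$, giving $G(n;d) = (d+1)/12$; when $n \ge 2d+3$, pigeonhole forces some part to have length $\ge 3$ (contributing at least $1/9$) while the remaining $d$ parts contribute at least $1/12$ each. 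Analogous bookkeeping, using $\widetilde G(\ell) = \infty$ for $\ell \le 2$ to make $G(n;d+1)$ the binding term of the $\min$ throughout the stated ranges, yields the $\widetilde G$ bounds.
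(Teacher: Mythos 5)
Your upper-bound construction (scaling the $d+1$ pieces by large pairwise coprime integers so that the cross terms decay like $1/(T_iT_j)$ and the resulting family escapes any finite union of subspaces of dimension at most $d$) is exactly the paper's argument, and that half is sound. The numerical deductions in your final paragraph also match the paper's, modulo the input bounds $G(3)\ge 1/8$, $G(4)\ge 1/9$, $G(\ell)\ge 1/6$ for $\ell \ge 5$, which the paper already has from Corollary \ref{cor3.5}.

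The lower bound is where your plan has a genuine gap, in two places. First, you split off the multiples of a prime $p$ and then \emph{hope} the cross term is non-negative, deferring the ``residual case'' to an unproven rigidity statement. The paper needs no such case analysis: by Proposition \ref{prop2.2}, whenever $\frak a$ is $k$--separated as $B\frak b+C\frak c$ one has $N(\frak a)=(1-\tfrac 1k)(N(\frak b)+N(\frak c))+\tfrac 1k N(\widetilde{\frak b}+\widetilde{\frak c})$; that is, the two diagonal terms plus the cross term reassemble into $(1-\tfrac 1k)(N(\frak b)+N(\frak c))$ plus $\tfrac 1k$ times a \emph{norm}, which is non-negative regardless of the sign of the cross term. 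By Proposition \ref{prop2.4} only finitely many primitive lists of length $n$ fail to be $k$--separated for a given $k$, so those can be discarded outright (this is admissible since $d\ge 1$). Your ``rigidity analysis'' is precisely the content of these two propositions, and without them the argument does not close. Second, your dimension bookkeeping runs in the wrong direction: you fix the recursion as $G(\ell)+G(n-\ell;d-1)$ and assert that the excluded subspaces ``pull back'' to the $\frak c$--piece with dimension at most $d-1$. But $\frak a\notin V_i$ gives no information about $\frak c$ alone; and if $\frak c$ is confined to a $(d-1)$--dimensional space $W$ while $\frak b$ ranges freely, the lists $\frak a=B\frak b+C\frak c$ fill out a space of dimension $\ell+d-1$, which exceeds $d$ for $\ell\ge 2$ and so cannot be excised. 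The paper's fix is an adaptive dichotomy: take $r$ minimal with $N(\frak b)\le G(b;r)-\epsilon$, so that \emph{either} $N(\frak b)\ge G(b;r-1)-\epsilon$ and $N(\frak c)\ge G(n-b;d-r)-\epsilon$, \emph{or both} pieces are confined, in which case $\frak a$ lies in one of finitely many direct sums $V_j\oplus W_m$ of dimension at most $r+(d-r)=d$, which are then added to the excluded collection. Your sketch for $\widetilde G(n;d)$ --- splitting into the cases $s(\frak b)=s(\frak c)=0$ versus both nonzero, with the hyperplane $s(\frak x)=0$ buying one extra dimension in the latter case --- is the right idea and matches the paper, but it inherits both gaps above.
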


In our work we shall determine $G(n)$ for all $2\le n\le 8$, and obtain precise explicit lower bounds for $G(n)$.  In particular, we can 
determine the asymptotic nature of $G(n)$ for all large $n$.  

\begin{theorem} \label{thm1.4}  For large $n$ one has 
$$ 
G(n) \sim \frac{\pi^2 }{72e^{2\gamma}} \frac{n}{(\log \log n)^2}. 
$$ 
\end{theorem}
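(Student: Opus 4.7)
The plan is to prove Theorem~\ref{thm1.4} by establishing matching upper and lower bounds for $G(n)$.

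\emph{Upper bound construction.} Fix a large parameter $y$ and an integer $K = K(y)$ growing faster than $\pi(y)$, and set $P = P(y) = \prod_{p \le y} p$. I consider the Liouville-weighted list
\[
\mathfrak{a}_{y,K} = [\lambda(d)\,d : d \mid P^K],
\]
which is non-degenerate of length $n = (K+1)^{\pi(y)}$ since distinct divisors $d$ give distinct values $|a| = d$. Using $\int_0^1 \psi(ax)\psi(bx)\,dx = \gcd(a,b)^2/(12ab)$ and multiplicativity of the factorization over primes, the norm factors as
\[
N(\mathfrak{a}_{y,K}) = \frac{1}{12}\prod_{p\le y} F_K(p), \qquad F_K(p) = \sum_{k_1,k_2=0}^K (-1/p)^{|k_1-k_2|}.
\]
A short computation gives $F_K(p)/(K+1) = (p-1)/(p+1) + O(1/K)$, so provided $K$ grows faster than $\pi(y)$,
\[
\frac{N(\mathfrak{a}_{y,K})}{n} \sim \frac{1}{12}\prod_{p\le y}\frac{p-1}{p+1} \sim \frac{\pi^2 e^{-2\gamma}}{72(\log y)^2}
\]
by Mertens' theorem. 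Since $\log\log n \sim \log y$ in this regime, this gives the upper bound $G(n) \le (1+o(1))\pi^2 n/(72 e^{2\gamma}(\log\log n)^2)$.

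\emph{Lower bound.} I claim that every non-degenerate list $\mathfrak{a}$ of length $n$ satisfies the matching inequality $N(\mathfrak{a}) \ge (1-o(1))\pi^2 n/(72 e^{2\gamma}(\log\log n)^2)$. This is the general-list analogue of Part~1 of Theorem~\ref{thm1.2} (which gives $K+L \le (1+o(1))(18 e^{2\gamma}/\pi^2)D^2(\log\log D)^2$ in the factorial-ratio case). The starting point is the Parseval identity
\[
N(\mathfrak{a}) = \frac{1}{2\pi^2}\sum_{m\ge 1}\frac{A(m)^2}{m^2}, \qquad A(m) = \sum_{j:|a_j|\mid m} a_j,
\]
derived from the Fourier expansion $\psi(x) = \sum_{k\ge 1}\sin(2\pi k x)/(\pi k)$. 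For each prime $p \le y$ (with $y$ eventually chosen so that $\log y \sim \log\log n$), I partition the list according to the $p$-adic valuations of $|a_j|$ and use convexity/averaging to derive a lower bound on $\sum_m A(m)^2/m^2$ that is tight precisely for the Liouville construction above. Taking the product of local contributions over $p \le y$ and invoking Mertens yields $n \le 12\,N(\mathfrak{a})\prod_{p\le y}(p+1)/(p-1) \cdot (1+o(1))$, equivalent to the claim.

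\emph{Main obstacle.} The crucial subtlety is that the natural Möbius-weighted construction $[\mu(d)\,d : d \mid P,\ d \text{ squarefree}]$ gives only $N/n \sim e^{-\gamma}/(12\log\log n)$, which is off by a factor of $\log\log n$. Passing from $K=1$ to $K \to \infty$ transforms the local factor $1-1/p$ into the tighter $(p-1)/(p+1) = (1-1/p)/(1+1/p)$, contributing the missing Mertens factor. On the lower bound side, the corresponding challenge is to show that one cannot beat the Liouville weights; this requires careful handling of $y$-rough entries of $\mathfrak{a}$, where the inductive framework of Theorem~\ref{thm1.3} (splitting a list along its gcd structure and applying the partition bound $G(n;d) = \min \sum G(\ell_j)$) becomes essential.
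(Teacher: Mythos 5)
Your upper bound is essentially the paper's own construction: the Liouville-weighted list of divisors of $P^K$ is exactly the list $\frak L(N)$ of Lemma \ref{lem12.1} with $N=P^K$, and your local factor $F_K(p)/(K+1)=(p-1)/(p+1)+O(1/K)$ agrees with the paper's $f(p^k)=1-\tfrac{2}{p+1}+O(\tfrac{1}{kp})$. One omission: your construction only produces lengths of the special form $n=(K+1)^{\pi(y)}$, whereas the theorem asserts an asymptotic for \emph{every} large $n$; the paper closes this by a greedy decomposition of a general $n$ into achievable lengths together with the sub-additivity $G(n)\le G(i)+G(n-i)$ of Proposition \ref{prop3.1}. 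This is easily repaired, but it must be said.

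The genuine gap is in your lower bound. The Parseval identity $N(\frak a)=\tfrac{1}{2\pi^2}\sum_m A(m)^2/m^2$ is correct, but the ensuing step --- ``partition the list according to the $p$-adic valuations \dots use convexity/averaging to derive a lower bound \dots tight precisely for the Liouville construction,'' followed by ``taking the product of local contributions over $p\le y$'' --- is not an argument. For a general list the quantity $\sum_m A(m)^2/m^2$ does \emph{not} factor as a product of local contributions over primes (it does so only for multiplicatively structured lists such as $\frak L(N)$), so there is no well-defined ``local contribution'' to bound from below and multiply. Moreover the minimization here is over \emph{signed} lists, so it is not the same extremal problem as the G\'al-sum maximization of Lewko--Radziwi{\l\l} that your sketch implicitly invokes, and the paper explicitly remarks that adapting that route, while possible, would not recover the sharp constants needed for small $n$. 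All of the difficulty of the theorem's lower bound lives in the step you have waved at. The paper instead obtains the lower bound entirely differently and \emph{before} Section 12: the $k$-separatedness decomposition of Proposition \ref{prop2.2} feeds the double induction of Proposition \ref{prop3.2}, whose output (Proposition \ref{prop3.4} and \eqref{3.6}) is the clean inequality $G(n)\ge \tfrac{n}{12}\prod_{j\le m}\tfrac{p_j-1}{p_j+1}$ for $2^m\le n<2^{m+1}$; Mertens then gives \eqref{3.7}, which is the matching lower asymptotic. To complete your proof you would either need to import that inductive machinery or actually carry out the convexity argument, which is a substantial piece of work rather than a finishing touch.
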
  

The paper is organized as follows.  In Section 2 we introduce the new notion of $k$--separatedness of lists, which is used 
in Section 3 to set up an inductive framework for obtaining lower bounds for $G(n)$.  In Section 4 we consider lists of length $2$, $3$ 
and $4$, and classify such lists with small norm.  Section 5 lays the foundations for the proof of Theorem \ref{thm1.1}, and gives 
a qualitative version (see Theorem \ref{thm5.1}) showing that there are only finitely many sporadic examples.  The complete classification 
of sporadic examples  is carried out in Section 6 (for length $5$), Section 8 (for length $7$) and Section 10 (for length $9$).  These 
involve a precise understanding of lists of length $5$, $6$, $7$ and $8$ with small norm, which is carried out in Sections 7 and 9.  In determining 
lists of small norm, and classifying the sporadic examples of Theorem \ref{thm1.1} we made use of computer calculations.  The programs were written in Python, 
and all the computations were carried out using fractions so that the answers are exact and not approximate.  The programs are not very involved, and most of them 
executed in a manner of minutes, with the slowest step taking about an hour.  Sections 11, 12, and 13 treat Theorems \ref{thm1.3}, \ref{thm1.4}, and \ref{thm1.2}.

We end the introduction by mentioning some related work.   In the case $D=1$, 
the paper of Vasyunin \cite{V}, motivated by the Nyman--Beurling formulation of the 
Riemann Hypothesis, identified the infinite families as well as the fifty two sporadic examples, and 
conjectured that these are all the examples.  

 The integrality of factorial ratios can be reformulated 
as a question on interlacing sets of fractions in $[0,1)$; see Bober \cite{Bober}.  Indeed this interlacing 
condition is what arises naturally in the work of Beukers and Heckman \cite{BH}.  A variant of this interlacing 
condition is studied in \cite{FMS} in connection with the problem of determining which hyperbolic hypergeometric 
monodromy groups are thin.  In passing, the authors there mention that one obtains examples of factorial ratios 
with $D=3$.  However, the examples so obtained are imprimitive in the sense that they arise from one of 
the sporadic examples with $D=1$ multiplied by two binomial coefficients.  

The masters thesis of Wider \cite{W} gives some examples of factorial ratios with $D\ge 2$, and considers 
the problem of determining whether such factorial ratios are primitive or not.  For example, Wider 
gives the family $[3a, 3b, -a, -b, -(a+b), - (a+b)]$, which is primitive, but several of the other examples 
found by him turn out to be imprimitive.   Our work on lists with small norm suggests a possible approach to finding 
examples of such families.  For example, we found that $[-a, 2a, -4a, -b, 2b, -4b, 6(a+b), -3(a+b)]$ 
gives a two parameter family of factorial ratios with $D=2$.  We hope to discuss such examples elsewhere.  

The complementary problem of finding sets of $n$ positive integers $a_1$, $\ldots$, $a_n$ with maximal value of 
$\sum_{i,j=1}^{n} (a_i, a_j)^2/(a_ia_j)$  is considered in Lewko and Radziwi{\l \l} \cite{LR} who establish an analogue of Theorem \ref{thm1.4} in this 
context.  In a sense, their argument is closely related to the approach in \cite{Sch} and could be adapted to give an alternative proof of Theorem \ref{thm1.4}; 
however the bounds obtained in this way would not be sharp enough to yield Theorem \ref{thm1.1} and the combinatorial approach developed here yields 
sharper lower bounds for $G(n)$ for small $n$.

\smallskip 

\noindent {\bf Acknowledgements.}   I am grateful to Peter Sarnak for encouragement and helpful remarks, and to him and Amit Ghosh 
for drawing my attention to the work of Wider.  I am partially supported by a grant from the National Science Foundation, and a Simons Investigator grant 
from the Simons Foundation.

\section{The key notion: $k$--separatedness of lists} 


\noindent In this section we introduce the key property of being $k$--separated which will set up 
an inductive procedure to evaluate norms of lists.  First, let us recall  that 
for any two non-zero integers $a$ and $b$  one has 
$$ 
\int_0^1 \psi(ax) \psi(bx) = \frac{1}{12} \frac{(a,b)^2}{ab}.  
$$ 
This is easily established; for example, by using the Fourier expansion 
$$ 
\psi(x) = 1/2 -\{ x\} =  \sum_{n\neq 0} \frac{e^{2\pi inx}}{2\pi in}, 
$$ 
and applying the Parseval formula.   Therefore 
\begin{equation} 
\label{2.1} 
N([a_1,\ldots,a_n]) = \frac{1}{12} \sum_{i,j=1}^{n} \frac{(a_i, a_j)^2}{a_i a_j}. 
\end{equation}

Given two lists ${\frak a}_1$ and ${\frak a}_2$, we denote by ${\frak a}_1 + {\frak a}_2$ the list obtained by concatenating these two lists, and removing any degeneracies.  
Note that even if ${\frak a}_1$ and ${\frak a}_2$ are non-degenerate, concatenating them might result in degeneracies, which are removed in defining ${\frak a}_1 +{\frak a}_2$.  
The next definition gives the key tool in our analysis.  

\begin{definition} \label{def2.1}   Let $k\ge 2$ be a natural number.  A primitive (non-degenerate) list $\frak a$ (of length $n$) is said to be $k$--separated, or $k$--separated of type $(\ell, m)$, if there are two primitive lists $\frak b$ and $\frak c$ with $1\le \ell =\ell(\frak b) \le m=\ell(\frak c) < n$ with $\ell+m =n$ such that: 

1.  There are two non-zero coprime integers $B$ and $C$ such that 
$$ 
\frak a = B \frak b +C\frak c. 
$$ 

2.  Exactly one of $B$ or $C$ is a multiple of $k$ (and the other is therefore coprime to $k$).  

3.  If $k|B$ then we set ${\widetilde {\frak b}} = (B/k) \frak b$ and $\widetilde{\frak c} = C\frak c$, and require that for all $kb\in B\frak b$ and $c\in \frak c$ one has $(kb,c) = (b,c)$.  Similarly if $k|C$ then we set $\widetilde{\frak b} = B\frak b$ and $\widetilde{\frak c} = (C/k) \frak c$ and require that for all $kc \in C\frak c$ and all $b\in \frak b$ one 
has $(b,kc) = (b,c)$. 
\end{definition}

 In all our work below, when a list $\frak a$ is $k$--separated, the symbols $\frak b$, $\frak c$, $B$, $C$, $\widetilde{\frak b}$ and $\widetilde{\frak c}$ will 
have the meanings assigned in the above definition.   Note that part of the definition  includes that there are no degeneracies when concatenating $B\frak b$ and $C\frak c$.  

The key point of this definition is the gcd condition imposed in part 3.  To help with this condition, we give a few illustrations.   
If the list ${\frak a}$ has at least one multiple of a prime $p$, and one non-multiple of a prime $p$, 
then ${\frak a}$ is $p$--separated, and we can split it as the list of multiples of $p$  and the list of non-multiples of $p$.   
A list $\frak a$ can be $k$--separated in several different ways:  for example $[1, -p, p^2]$ can be split as $p\times [-1,p] + [1]$ or as $p \times [p] + [1,-p]$.  
Lastly, the reader may check that $[30, -15, -10, - 6, 1]$ is $2$, $3$, and $5$--separated, but is not $6$--separated.

The following proposition calculates the norm of $\frak a$ in terms of smaller lists $\frak b$ and $\frak c$, thereby setting the stage 
for inductive arguments. 

\begin{proposition} \label{prop2.2}  Suppose that ${\frak a}$ is $k$--separated, and splits as $\frak a = B \frak b + C\frak c$ as in Definition \ref{def2.1}.  
Then 
$$ 
N(\frak a )= \Big( 1- \frac 1k \Big) \Big( N(\frak b) + N(\frak c) \Big) + \frac 1k N(\widetilde{ \frak b} + \widetilde{\frak c}). 
$$
Further, note that $\ell(\widetilde{\frak b}+\widetilde{ \frak c}) \ge |\ell(\frak b) -\ell (\frak c)|$, and that $\ell(\widetilde{\frak b} +\widetilde{ \frak c})$ has the same parity as $\ell(\frak a)$.  
\end{proposition}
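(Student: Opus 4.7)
The plan is to compute $N(\frak a)$ using formula~\eqref{2.1} and partition the resulting double sum over pairs $(\alpha,\beta)$ in $\frak a$ according to whether each of $\alpha,\beta$ comes from $B\frak b$ or from $C\frak c$. The self-pair sum over $B\frak b$ contracts because $\gcd(Bb_i,Bb_j)=|B|\gcd(b_i,b_j)$ causes the factors of $B$ to cancel in numerator and denominator, yielding exactly $12N(\frak b)$; symmetrically the self-pair sum over $C\frak c$ equals $12N(\frak c)$. Writing $S_{BC}$ for the (single-counted) cross sum, one obtains
$$ 12\,N(\frak a) = 12\,N(\frak b) + 12\,N(\frak c) + 2\,S_{BC}. $$

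The heart of the argument is to reconcile $S_{BC}$ with the analogous cross sum for $\widetilde{\frak b}+\widetilde{\frak c}$. Assume without loss of generality that $k\mid B$, and set $B'=B/k$. I would then establish, prime by prime, the identity
$$ (Bb,Cc) \;=\; (B'b,Cc) \qquad\text{for every } b\in \frak b,\ c\in \frak c. $$
For primes $p\nmid k$ this is automatic since $v_p(B)=v_p(B')$. For primes $p\mid k$ the coprimality $\gcd(B,C)=1$ forces $p\nmid C$, so $v_p(Cc)=v_p(c)$, and the desired equality reduces to the inequality $v_p(B'b)\ge v_p(c)$. This is precisely the content of the condition $(kb,c)=(b,c)$ from part~3 of Definition~\ref{def2.1}, applied to the element $B'b\in \widetilde{\frak b}$ (whose $k$-multiple $Bb$ lies in $B\frak b$) and to $c\in \frak c$.

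Consequently $S_{BC}=(1/k)\widetilde S_{BC}$, where $\widetilde S_{BC}$ denotes the cross sum for $\widetilde{\frak b}+\widetilde{\frak c}$. Performing the same three-way decomposition on $N(\widetilde{\frak b}+\widetilde{\frak c})$ and using $N(B'\frak b)=N(\frak b)$ and $N(C\frak c)=N(\frak c)$ (since scaling by a nonzero integer preserves the norm) yields $12\,N(\widetilde{\frak b}+\widetilde{\frak c}) = 12\,N(\frak b)+12\,N(\frak c)+2\widetilde S_{BC}$. Substituting into the earlier expression for $N(\frak a)$ and dividing by $12$ produces the claimed identity. Any degeneracies in $\widetilde{\frak b}+\widetilde{\frak c}$ cause no difficulty: $\psi(ax)+\psi(-ax)=0$ almost everywhere, so the corresponding $(a,-a)$ contributions to the double sum cancel in pairs and the norm is unchanged by their removal.

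For the length statement, every degeneracy in $\widetilde{\frak b}+\widetilde{\frak c}$ must pair an element of $\widetilde{\frak b}$ with the negative of an element of $\widetilde{\frak c}$ (since $\frak b$ and $\frak c$ are each non-degenerate), so removing $d$ such pairs leaves length $\ell(\frak b)+\ell(\frak c)-2d=\ell(\frak a)-2d$. This matches the parity of $\ell(\frak a)$ and is at least $\ell(\frak b)+\ell(\frak c)-2\min(\ell(\frak b),\ell(\frak c))=|\ell(\frak b)-\ell(\frak c)|$. The main delicate step throughout is the prime-by-prime bookkeeping that translates the Definition~\ref{def2.1} condition into the gcd equality $(Bb,Cc)=(B'b,Cc)$; once this is in hand, the rest of the argument is purely formal.
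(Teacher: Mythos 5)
Your proof is correct and follows essentially the same route as the paper's: both split $N(\frak a)$ into the two diagonal contributions $N(\frak b)$, $N(\frak c)$ plus a cross term, use the gcd hypothesis of Definition \ref{def2.1} to show that the cross term equals $1/k$ times the cross term of $\widetilde{\frak b}+\widetilde{\frak c}$, and then reassemble, with the same counting of removed degenerate pairs for the length and parity claims. Your prime-by-prime verification that $(Bb,Cc)=(B'b,Cc)$ (using $\gcd(B,C)=1$ to get $v_p(Cc)=v_p(c)$ for $p\mid k$) is just a slightly more explicit rendering of the step the paper asserts directly, and it correctly bridges the definition's condition, stated for $c\in\frak c$, to the scaled elements $Cc$ that actually appear in the norm computation.
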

\begin{proof}  Suppose $k|B$ so that $\widetilde{\frak b} = (B/k) \frak b$ and $\widetilde{\frak c} = C \frak c$.  Note that 
$$ 
N(\frak a) = \int_0^1 (B{\frak b} + C {\frak c})(x)^2 dx = N(\frak b) + N(\frak c) + 2 \int_0^1 (B{\frak b})(x) (C{\frak c})(x) dx.
$$ 
Since $(kb,c)= (b,c)$ for any elements $kb$ of $B{\frak b}$ and $c$ of $C\frak c$, it follows that 
$$ 
\int_0^1 (B\frak b)(x) (C\frak c)(x) dx =\frac 1{12} \sum_{\substack{kb \in B\frak b\\ c\in \frak c}} \frac{(kb,c)^2}{kbc} = 
\frac{1}{k} \frac{1}{12} \sum_{\substack{b\in \widetilde{\frak b} \\ c\in \widetilde{\frak c}}} \frac{(b,c)^2}{bc} 
= \frac{1}{k} \int_0^1 {\widetilde {\frak b}}(x) \widetilde{\frak c}(x) dx. 
$$
Since $N(\frak b) = N(\widetilde{\frak b})$ and $N(\frak c) = N(\widetilde{\frak c})$ we conclude that 
\begin{align*}
N(\frak a) &=  \Big( 1-\frac 1k\Big) (N(\frak b) + N(\frak c)) + \frac 1k \Big( N(\widetilde{\frak b}) + N(\widetilde{\frak c}) + 2\int_0^1 \widetilde{\frak b}(x) 
\widetilde{\frak c}(x) dx \Big) \\
&= 
\Big(1-\frac 1k \Big) (N(\frak b) + N(\frak c)) + \frac 1k N(\widetilde{\frak b} +\widetilde{\frak c}). 
\end{align*} 
The case $k|C$ follows in exactly the same way. 

When the lists $\widetilde{\frak b}$ and $\widetilde{\frak c}$ are added, the number of degeneracies that must be removed is always even 
and is at most $2\min (\ell(\widetilde{\frak b}), \ell(\widetilde{\frak c}))$.  This proves the last assertion of the proposition. 
\end{proof} 

The other important feature of our definition is that only finitely many primitive lists of length $n$ are at most $k$-separated.  
 Naturally we say that ${\frak a}$ is {\sl at most $k$--separated} if it is not $\ell$--separated for any $\ell >k$, and 
we say that ${\frak a}$ is {\sl at least $k$--separated} if it  is $\ell$--separated for some $\ell \ge k$.

 \begin{lemma} \label{lem2.3}  Let $p$ be a prime, and let $\frak a$ be a primitive list of length $n$.  Suppose there are 
 two elements $a_1$ and $a_2$ in $\frak a$ with $p^{e_1} \Vert a_1$ and $p^{e_2} \Vert a_2$.  If $e_2 \ge e_1+r$, and there 
 is no element of $\frak a$ exactly divisible by $p^{e}$ with $e_1 < e < e_2$, then $\frak a$ is $p^r$--separated. 
 \end{lemma}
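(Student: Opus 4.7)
The plan is to partition the list $\frak a$ into two sublists according to the gap in $p$-adic valuations, and show that this partition realizes $\frak a$ as a $p^r$-separated list in the sense of Definition \ref{def2.1}. Let $\frak b'$ be the sublist of elements $a \in \frak a$ with $v_p(a) \le e_1$, and let $\frak c'$ be the sublist with $v_p(a) \ge e_2$. By the hypothesis that no element of $\frak a$ has $v_p \in (e_1, e_2)$, these sublists partition $\frak a$, and both are nonempty (they contain $a_1$ and $a_2$ respectively). Writing $B = \gcd(\frak b')$ and $C = \gcd(\frak c')$ (taken positive), and setting $\frak b = \frak b'/B$, $\frak c = \frak c'/C$, I would propose the splitting $\frak a = B\frak b + C\frak c$.

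First I would verify conditions (1) and (2) of Definition \ref{def2.1}. The lists $\frak b$ and $\frak c$ are primitive by construction. Since every element of $\frak c'$ is divisible by $p^{e_2}$, we have $v_p(C) \ge e_2 \ge e_1 + r \ge r$, so $p^r$ divides $C$. Since $\gcd(\frak a) = 1$ by primitivity, we have $\gcd(B, C) = \gcd(\gcd(\frak b'), \gcd(\frak c')) = \gcd(\frak a) = 1$, and as $p \mid C$ this forces $p \nmid B$. Thus $B, C$ are coprime non-zero integers and exactly one of them, namely $C$, is divisible by $k = p^r$. No degeneracies arise in concatenating $B\frak b$ and $C\frak c$ because $\frak a = \frak b' \sqcup \frak c'$ is itself non-degenerate.

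The substantive step is condition (3). With $k \mid C$, set $\widetilde{\frak b} = B\frak b$ and $\widetilde{\frak c} = (C/p^r)\frak c$, and verify that $(b, p^r c) = (b, c)$ for every $b \in B\frak b = \frak b'$ and every $p^r c \in C\frak c = \frak c'$. At any prime $q \neq p$ this is automatic, since multiplying by $p^r$ does not change $q$-adic valuations. At $p$ itself, the definition of $\frak b'$ gives $v_p(b) \le e_1$, while $v_p(c) = v_p(p^r c) - r \ge e_2 - r \ge e_1$; hence $v_p(b) \le v_p(c)$, so that
$$
\min(v_p(b), v_p(c)) \;=\; v_p(b) \;=\; \min(v_p(b), r + v_p(c)),
$$
which is exactly the required equality of $p$-adic valuations of the two gcds.

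There is no real obstacle beyond this bookkeeping; the hypothesis $e_2 - e_1 \ge r$ is precisely what is needed to bridge the valuations of elements in $\frak b'$ and $\frak c'$. If $\ell(\frak b') > \ell(\frak c')$, one swaps the roles of $\frak b$ and $\frak c$ (and correspondingly of $B$ and $C$) so that $\ell(\frak b) \le \ell(\frak c)$ as required by the definition, in which case the symmetric $k \mid B$ clause of condition (3) is verified by the identical valuation calculation.
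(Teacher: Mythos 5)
Your proof is correct and takes essentially the same route as the paper: split $\frak a$ into the elements divisible by $p^{e_2}$ and those that are not (which, by the gap hypothesis, is your partition by $v_p\le e_1$ versus $v_p\ge e_2$), and check the gcd condition of Definition \ref{def2.1} using that one side has $p$-adic valuation at most $e_1$ while the other retains valuation at least $e_2-r\ge e_1$ after dividing out $p^r$. The paper states this in two sentences; you have merely made explicit the choice of $B$, $C$ as the gcds of the two sublists and the prime-by-prime valuation check.
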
 
\begin{proof}  Divide $\frak a$ into the two non-empty lists consisting of the multiples of $p^{e_2}$ and those elements 
that are not multiples of $p^{e_2}$.  In the notation of our definition, $B\frak b$ will be the smaller of these two lists, and $C\frak c$ the 
longer one.  The important gcd condition of the definition (for being $p^r$--separated) is satisfied because all the elements of one list will be multiples of $p^{e_2}$ 
while all the elements of the other list are divisible at most by $p^{e_1}$.
\end{proof} 
 
From the lemma it is easy to deduce that a primitive list $\frak a$ of length $n$ that is at most $k$--separated must consist of divisors of a specified 
number.     

\begin{proposition} \label{prop2.4}   If $\frak a$ is a primitive list of length $n$ that is at most $k$--separated, then the elements of $\frak a$ are divisors of 
$$ 
\prod_{\substack{p^r \le k< p^{r+1}} } p^{r(n-1)}, 
$$ 
where the product is over all primes $p \le k$ and $p^r$ is the largest power of $p$ at most $k$. 
\end{proposition}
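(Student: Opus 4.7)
The plan is to apply Lemma 2.3 prime by prime and convert the ``at most $k$--separated'' hypothesis into a direct bound on $p$--adic valuations.

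First I would fix a prime $p$ and examine the multiset of values $\{v_p(a) : a \in \frak a\}$, listing the distinct exponents that actually appear as $0 \le e_0 < e_1 < \ldots < e_s$. Primitivity of $\frak a$ forces $e_0 = 0$, since otherwise $p$ would divide $\gcd(\frak a)$. For each consecutive pair $(e_{i-1}, e_i)$ there are elements of $\frak a$ exactly divisible by $p^{e_{i-1}}$ and by $p^{e_i}$, and by the way the $e_i$'s were chosen there is no element with intermediate valuation, so Lemma 2.3 applies with gap $r = e_i - e_{i-1}$ and shows that $\frak a$ is $p^{e_i - e_{i-1}}$--separated. The hypothesis that $\frak a$ is at most $k$--separated therefore yields $p^{e_i - e_{i-1}} \le k$ for each $i$.

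This inequality has two consequences that I would now cash in. For a prime $p > k$, no positive gap is allowed at all, so $s = 0$ and every element of $\frak a$ is coprime to $p$. For a prime $p \le k$, let $r = r(p)$ be the largest integer with $p^r \le k$; then each gap satisfies $e_i - e_{i-1} \le r$, and since $s + 1 \le n$ we obtain
\[
\max_{a \in \frak a} v_p(a) \;=\; e_s \;=\; \sum_{i=1}^{s}(e_i - e_{i-1}) \;\le\; s\cdot r \;\le\; (n-1)r.
\]
Taking the product over all primes $p \le k$ gives that every $a \in \frak a$ divides $\prod_{p^r \le k < p^{r+1}} p^{r(n-1)}$, as claimed.

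I do not anticipate a real obstacle here: the entire argument is a bookkeeping exercise in which Lemma 2.3 does all the work, so the only thing worth checking carefully is that the two elements and the ``no intermediate valuation'' condition needed to invoke Lemma 2.3 are genuinely available for each chosen consecutive pair $(e_{i-1}, e_i)$, which is immediate from how the $e_i$ were defined. The edge case $n=1$ is vacuous, since a primitive list of length one is $[\pm 1]$ and the asserted product is $1$.
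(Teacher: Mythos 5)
Your proof is correct and follows essentially the same route as the paper: both arguments fix a prime, look at the sorted sequence of $p$-adic valuations (anchored at $0$ by primitivity), and invoke Lemma \ref{lem2.3} to bound consecutive gaps by $r$, the paper phrasing this via pigeonhole and you by summing at most $n-1$ gaps each of size at most $r$. Your explicit treatment of primes $p>k$ is a point the paper leaves implicit, but it is the same argument with $r=0$.
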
 
\begin{proof}  Suppose $p$ is prime and $p^r \le k < p^{r+1}$.  Write in ascending order the sequence of the powers of $p$ dividing $a_j$.  Since $\frak a$ is 
primitive, there is some $a_j$ that is not a multiple of $p$ and so this sequence starts with $0$.  If the sequence ends in a number larger than $r(n-1)$ then by the pigeonhole principle there must be two consecutive exponents that differ by at least $(r+1)$.   But then by Lemma \ref{lem2.3} we would know that $\frak a$ is $p^{r+1}$--separated.  
\end{proof}

In our future work we shall use Proposition \ref{prop2.4}, together with variants when the list is known to be of a special form, to consider (using a computer calculation) 
all possible lists of length $n$ and separation at most $k$.   

\begin{definition} \label{def2.5}   If $n$ is even then a list $\frak a$ is said to be of Type A if it is of the form $[a_1, -2a_1, a_3, -2a_3, \ldots, a_{n-1}, -2a_{n-1}]$.  If $n$ is odd, then $\frak a$ is of Type A if it is of the form $[a_1, -2a_1, 
a_3, -2a_3, \ldots, a_{n-2}, -2a_{n-2},a_n]$.   A list is said to be of Type B if it is not of Type A.  
\end{definition}  

Most of the lists that give rise to integral factorial ratios (with $D=1$) are of Type A, and Type A lists also account for many of the lists with small norm.   The special shape 
of Type A lists, however, allows us to search over lists with greater separation than we could for the most general lists.   Rather than stating general results (which would be 
proved exactly as in Proposition \ref{prop2.4}) we content ourselves with giving some typical examples.   

Consider primitive lists $\frak a$ of length $7$ that are at most $7$ separated.  Proposition \ref{prop2.4}  gives that the elements of $\frak a$ must be divisors of 
$2^{12} \times 3^6 \times 5^6 \times 7^6$.  If we restrict to lists $\frak a$ that are of Type A, then the elements of $\frak a$ are constrained to be divisors of $2^9 \times 3^3 \times 5^3 \times 7^3$.   Suppose now that $\frak a$ is further restricted to be of Type A and to satisfy  $s(\frak a) = 0$.   If we write $\frak a =[a,-2a, b, -2b, c, -2c, d=a+b+c]$ then at least two of $a$, $b$, $c$, $d$ 
must be coprime to $p$ for each prime $p\le 7$.  Using this fact, and arguing as in Proposition \ref{prop2.4}, we may see that the elements of $\frak a$ are now 
forced to be divisors of $2^6 \times 3^2 \times 5^2 \times 7^2$.

\section{General lower bounds on $G(n)$} 


\noindent  In this section we shall establish bounds on norms of lists of length $n$, by induction on $n$ as 
well as induction on the largest prime factor of the elements of the list.   Here it is convenient to denote the $j$-th 
prime by $p_j$, and to define 
$$
G_r (n) = \inf \{ N({\frak a}): \ \frak a = [a_1, \ldots, a_n] , \ \ p|(a_1 \cdots a_n) \implies p \le p_r \}, 
$$ 
so that $G_r(n)$ gives the infimum of norms of (non-degenerate) lists all of whose elements are composed only of the first $r$ primes.   
The two lists of length $n$  consisting of all $1$'s or all $-1$'s have norm $n^2/12$, and so it is natural to define $G_0(n)$ as $n^2/12$.  
Clearly $G_0(n) \ge G_1(n) \ge G_2(n) \ge G_3(n) \ge \ldots \ge G(n)$, and indeed $G(n)= \lim_{r\to \infty} G_r(n)$.   
Note also that when $n=1$, $N([a])=1/12$ for all non-zero integers $a$, and so $G_r(1)=1/12$ for all $r$.   

\begin{proposition} \label{prop3.1}  For all $n\ge 2$ and $r\ge 1$ we have 
$$ 
G_r(n) \le \min_{1\le i < n} (G_r(i) + G_{r-1}(n-i)). 
$$ 
Moreover 
$$ 
G(n) =G(n;0) \le G(n;1)  = \min_{1 \le i <n} (G(i) +G(n-i)).
$$ 
Finally, there is a non-degenerate list of length $n$ attaining the norm $G(n)$, so that the infimum in the definition of $G(n)$ is an 
attained minimum. 
\end{proposition}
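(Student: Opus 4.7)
The plan is to take the three claims in order, using a common construction—concatenate two shorter lists after scaling one of them by a large prime power—coupled with the separation framework of Section 2.

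For the first inequality, I would pick a primitive $\frak{b}$ of length $i$, supported on primes $\le p_r$, with $N(\frak{b}) < G_r(i) + \epsilon$, and a primitive $\frak{c}$ of length $n-i$, supported on primes $\le p_{r-1}$, with $N(\frak{c}) < G_{r-1}(n-i) + \epsilon$, and form $\frak{a}_M = \frak{b} + p_r^M \frak{c}$. Once $M$ exceeds every $p_r$-valuation occurring in $\frak{b}$, the concatenation has no degeneracies, $\frak{a}_M$ is primitive of length $n$, and its elements use only primes $\le p_r$. Using \eqref{2.1}, the cross term in $N(\frak{a}_M) = N(\frak{b})+N(\frak{c})+2\int_0^1 \frak{b}(x)(p_r^M\frak{c})(x)\,dx$ evaluates to $(1/6)\sum_{b,c}(b,c)^2 \, p_r^{2v_{p_r}(b)-M}/(bc)$, which tends to $0$ as $M \to \infty$. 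Letting $M\to\infty$ and then $\epsilon\to 0$ yields part 1.

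In part 2 the inequality $G(n;0) \le G(n;1)$ is immediate from the definitions (the right-hand side is a supremum over more constraints). For $G(n;1) \le \min_i(G(i)+G(n-i))$, the same construction applies: given finitely many lines $V_1,\ldots,V_r$, fix a prime $p$ coprime to every element of $\frak{b}\cup\frak{c}$ and vary $M$. Since $\frak{a}_M$ is primitive, it lies in $V_j$ only if it equals $\pm$ the generator of $V_j$ as a multiset, which determines $M$, so at most $r$ values of $M$ are bad and we select a good one. For the matching lower bound, fix $\epsilon > 0$ and choose $k$ with $(1-1/k)\min_i(G(i)+G(n-i)) > \min_i(G(i)+G(n-i))-\epsilon$; let $V_1,\ldots,V_s$ be the (finitely many) lines spanned by primitive length-$n$ lists that are at most $k$-separated, finite by Proposition~\ref{prop2.4}. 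Any primitive list outside these lines is $k'$-separated for some $k' > k$, and Proposition~\ref{prop2.2} (dropping the non-negative term $(1/k')N(\widetilde{\frak{b}}+\widetilde{\frak{c}})$) gives
\begin{equation*}
N(\frak{a}) \ge (1-1/k')(G(\ell(\frak{b}))+G(\ell(\frak{c}))) \ge (1-1/k)\min_i(G(i)+G(n-i));
\end{equation*}
non-primitive lists reduce to this case by rescaling. Sending $\epsilon\to 0$ concludes.

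For part 3 I would induct on $n$, with $n=1$ trivial. If $G(n) < G(n;1)$, pick $k$ with $(1-1/k)G(n;1) > G(n)$: the preceding bound shows that every primitive length-$n$ list with separation exceeding $k$ has norm above $G(n)$, so the infimum is attained over the finite set of at-most-$k$-separated primitive lists. In the residual case $G(n)=G(n;1)=G(\ell_1)+G(\ell_2)$ for some split, invoke induction to obtain primitive $\frak{b}^*, \frak{c}^*$ realizing $G(\ell_1), G(\ell_2)$, pick a prime $q$ coprime to every element of $\frak{b}^*\cup\frak{c}^*$, and set $\frak{a}_q = \frak{b}^* + q\frak{c}^*$. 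This list is primitive, non-degenerate, of length $n$, and using \eqref{2.1} a direct computation gives
\begin{equation*}
N(\frak{a}_q) = G(n) + \frac{2}{q}\int_0^1 \frak{b}^*(x)\frak{c}^*(x)\,dx.
\end{equation*}
If this cross integral were nonzero, replacing $\frak{c}^*$ by $-\frak{c}^*$ (which preserves $N(\frak{c}^*)$, primitivity, and non-degeneracy of the concatenation) if necessary would produce a length-$n$ non-degenerate list with norm strictly below $G(n)$, contradicting the definition of $G(n)$. Hence the integral vanishes and $\frak{a}_q$ attains $G(n)$. The main obstacle, to my mind, is precisely this edge case $G(n)=G(n;1)$: compactness alone fails, because the set of primitive lists whose norms approach $G(n;1)$ from above need not be finite, so one must exhibit an explicit attainer, and the sign-flip trick that forces the cross integral to vanish is the key non-obvious ingredient.
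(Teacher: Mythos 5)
Your proposal is correct and follows essentially the same route as the paper: the concatenation-after-scaling construction for the upper bounds, Propositions \ref{prop2.2} and \ref{prop2.4} (finiteness of at-most-$k$-separated lists plus the $(1-1/k)$ norm inequality) for the lower bound on $G(n;1)$, and the sign-flip argument forcing $\int_0^1 \frak{b}(x)\frak{c}(x)\,dx=0$ in the attainment step. The only cosmetic difference is that in part 1 you kill the cross term by scaling by a large power $p_r^M$, whereas the paper scales by $p_r$ once and chooses the better of the two signs $p_r\frak{b}\pm\frak{c}$; both are valid.
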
 
 \begin{proof}  Suppose $\frak b$ and $\frak c$ are two lists with lengths $i$ and $n-i$ respectively.  Let $p$ be a prime 
 larger than the largest prime factor of elements in the list $\frak c$.   Then the lists $p\frak b + \frak c$ and $p\frak b - \frak c$ are 
 both of length $n$ and $p$--separated (in the obvious way), and one has 
 \begin{equation} 
 \label{3.1} 
 N(p\frak b \pm \frak c) = N(\frak b) + N(\frak c)  \pm \frac{2}{p} \int_0^1 \frak b(x) \frak c(x) dx. 
 \end{equation} 
 It follows that at least one of the two lists $p\frak b \pm \frak c$ has norm at most $N(\frak b) + N(\frak c)$.  If we let $p$ tend to infinity here, then 
 the lists $p\frak b\pm \frak c$ have norm tending to $N(\frak b) + N(\frak c)$.  
 
Using this observation to a list $\frak b$ with all elements composed of primes at most $p_r$, and a list $\frak c$ with all elements 
composed of primes at most $p_{r-1}$, and with $p=p_r$ we obtain that $G_r(n) \le G_r(i) + G_{r-1}(n-i)$, and the first claim of the 
proposition follows. 

For any $\epsilon >0$ we may find $\frak b$ of length $i$ with  $N(\frak b) \le G(i) + \epsilon$ and $\frak c$ of length $n-i$ with $N(\frak c) \le G(n-i) + \epsilon$, and 
so by choosing $p$ sufficiently large, we find that there are infinitely many primitive lists $\frak a= p \frak b +\frak c$ with norm below $G(i) + G(n-i) + 3\epsilon$.  
Since  $\epsilon >0$ is arbitrary, we conclude that $G(n;1) \le \min_{1\le i < n} (G(i) + G(n-i))$.  

To obtain the reverse inequality, suppose that there is an infinite sequence of primitive lists $\frak a_j$, all of length $n$, with 
$N(\frak a_j)$ converging to $G(n;1)$.   By Proposition \ref{prop2.4}, given any $k$, if $j$ is large enough then ${\frak a}_j$ is at least $k$--separated.  
Appealing now to Proposition \ref{prop2.2} we find that $N(\frak a_j) \ge (1-1/k) \min_{1\le i <n} (G(i) + G(n-i))$.   Letting $k \to \infty$ we conclude that $G(n;1) \ge \min_{1\le i<n} (G(i)+G(n-i))$, as claimed.  

Lastly, it remains to show that there is a list of length $n$ with norm $G(n)$, which we establish by induction.  The length $1$ case is trivial, and suppose the claim holds 
for all lengths below $n$.  If $G(n) < G(n;1)$ then (by the definition of $G(n;1)$) there are only finitely many primitive lists of length $n$ with norm below $(G(n)+G(n;1))/2$, 
and therefore a minimum exists.  If $G(n)=G(n;1)$, then pick (using the induction hypothesis) two primitive lists $\frak b$ and $\frak c$ with lengths adding up to $n$ and 
with $N(\frak b) + N(\frak c) = G(n;1)$.   For large enough $p$, using \eqref{3.1} and the assumption that there are no lists with norm below $G(n;1)$, we find that  
$\int_0^1 \frak b(x) \frak c(x)dx =0$ and that the lists $p\frak b \pm \frak c$ all have norm $N(\frak b) + N(\frak c)$.   This completes the proof. 
\end{proof}

\begin{proposition} \label{prop3.2}  For all $r\ge 1$ and $n\ge 2$ we have 
\begin{equation} 
\label{3.2} 
G_r(n) \ge \min_{1 \le i<n} \min_{\substack{|n-2i| \le j < n \\ j-n \text{ even}} } \Big( G_{r-1}(n),  G_r(i) + G_{r-1}(n-i), \Big(1-\frac 1{p_r}\Big) 
\Big( G_r(i)  + G_{r-1}(n-i) \Big) + \frac{G_{r} (j)}{p_r} \Big). 
\end{equation} 
Further, for any $r\ge 0$ we have   
\begin{equation} 
\label{3.3} 
G(n) \ge \min_{1 \le i<n} \min_{\substack{|n-2i| \le j < n \\ j-n \text{ even}} } \Big( G_r(n), G(i) + G(n-i), \Big( 1- \frac{1}{p_{r+1}}\Big) \Big( 
G(i) + G(n-i)\Big) +\frac{G(j)}{p_{r+1}}\Big). 
\end{equation} 
\end{proposition}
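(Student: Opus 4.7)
The plan is to analyze an arbitrary primitive list $\frak a$ of length $n$ by a case distinction on whether a distinguished prime $p$ divides some element of $\frak a$: for the first assertion $p = p_r$, and for the second $p$ is the smallest prime factor of any element of $\frak a$ exceeding $p_r$, so $p \ge p_{r+1}$. If no such prime appears, then $\frak a$ lies in the smaller universe and $N(\frak a) \ge G_{r-1}(n)$ in part 1, or $N(\frak a) \ge G_r(n)$ in part 2, matching the first term of the minimum.

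Otherwise, primitivity forces both multiples and non-multiples of $p$ to appear in $\frak a$, and Lemma \ref{lem2.3} provides the $p$-separation $\frak a = B \frak b + C \frak c$ where $B \frak b$ is the sublist of multiples of $p$ and $C \frak c$ the sublist of non-multiples. By construction $\frak c$ is coprime to $p$, so in part 1 its prime factors are at most $p_{r-1}$, giving $N(\frak c) \ge G_{r-1}(n-i)$ while $N(\frak b) \ge G_r(i)$, where $i = \ell(\frak b)$. Proposition \ref{prop2.2} then yields
$$
N(\frak a) = \Bigl(1 - \frac{1}{p}\Bigr)\bigl(N(\frak b) + N(\frak c)\bigr) + \frac{1}{p} N(\widetilde{\frak b} + \widetilde{\frak c}),
$$
with $j := \ell(\widetilde{\frak b} + \widetilde{\frak c})$ satisfying $|n-2i| \le j \le n$ and $j \equiv n \pmod 2$. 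When $j < n$, bounding $N(\widetilde{\frak b} + \widetilde{\frak c}) \ge G_r(j)$ (respectively $G(j)$) directly produces the third term in part 1. In part 2, where the separating prime $p$ may exceed $p_{r+1}$, the function $p \mapsto (1 - 1/p) A + (1/p) B$ is monotone, so its infimum over primes $p \ge p_{r+1}$ is either $(1-1/p_{r+1}) A + (1/p_{r+1}) B$ (yielding the third term as stated) or $A = G(i) + G(n-i)$ in the limit $p \to \infty$ (yielding the second term), according to the sign of $A - B$.

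The main obstacle is the subcase $j = n$, in which no degeneracies arise in the concatenation and one only has $N(\widetilde{\frak b} + \widetilde{\frak c}) \ge G_r(n)$ (resp.\ $G(n)$), producing a self-referential inequality. I resolve this by passing to the infimum: setting $\alpha = G_r(n)$ and $B^\ast = \min_i (G_r(i) + G_{r-1}(n-i))$, and partitioning primitive lists of length $n$ into the three cases, one sees
$$
\inf_{\frak a: j = n} N(\frak a) \ge \Bigl(1 - \frac{1}{p_r}\Bigr) B^\ast + \frac{1}{p_r}\, \alpha.
$$
If the overall infimum $\alpha$ is realized in this $j = n$ subcase, the inequality forces $\alpha \ge B^\ast$, recovering the second term of the minimum. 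Otherwise $\alpha$ is realized in the first case or in the $j < n$ subcase, which already yield the first or third term respectively. The same argument, with $p_{r+1}$ in place of $p_r$ and $G$ in place of $G_r$, together with the monotonicity observation to absorb the varying separating prime, handles part 2.
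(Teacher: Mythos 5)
Your argument is correct and follows essentially the same route as the paper: split $\frak a$ into multiples and non-multiples of the distinguished prime, apply Proposition \ref{prop2.2}, bound the pieces by the appropriate $G_r$ or $G$ values, and resolve the self-referential $j=n$ case by rearranging the inequality (the paper phrases this as ``considering whether the minimum over $j$ occurs for $j=n$ or for a smaller value''). The only cosmetic difference is that for \eqref{3.3} the paper notes $\min_j G(j)\le G(i)+G(n-i)$ via Proposition \ref{prop3.1} so that the expression is always minimized at $p=p_{r+1}$, whereas you split on the sign of $A-B$; both work.
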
 
\begin{proof}  We begin with \eqref{3.2}.  Suppose ${\frak a}$ is a primitive list of length $n$ all of whose prime factors are at most $p_r$.  If there 
is no element of the list divisible by $p_r$, then $N(\frak a) \ge G_{r-1}(n)$ and the desired inequality holds.   So we may assume that 
$\frak a$ has at least one element being a multiple of $p_r$ and one that is not.  Thus $\frak a$ is $p_r$--separated, and we write $\frak a = p_r  \frak b + \frak c$ 
with $\frak c$ denoting the elements of $\frak a$ not divisible by $p_r$.   Thus by Proposition \ref{prop2.2}  
$$ 
N(\frak a) = \Big(1-\frac 1{p_r} \Big) \Big( N(\frak b) + N(\frak c) \Big) + \frac 1{p_r} N(\frak b +\frak c). 
$$ 
Since $\frak b$ and $\frak b+\frak c$ are lists with all elements divisible by primes at most $p_r$, and $\frak c$ is a list with 
elements divisible by primes at most $p_{r-1}$, it follows that 
$$ 
G_{r}(n) \ge \min_{1 \le i< n} \Big( \Big( 1- \frac 1{p_r}\Big) \Big( G_r(i) + G_{r-1}(n-i)\Big) + \frac 1{p_r} \min_{\substack{|n-2i| \le j \le n \\ j\equiv n \bmod 2}} G_r(j)\Big). 
$$ 
Upon considering whether the minimum over $j$ above occurs for $j=n$, or for a smaller value of $j$, we obtain \eqref{3.2}.  

The proof of \eqref{3.3} is similar.  Let  ${\frak a}$ be a primitive list of length $n$.  If all elements of ${\frak a}$ are divisible only by primes at most $p_r$ then $N(\frak a) \ge G_r(n)$.  
Otherwise, for some prime $p\ge p_{r+1}$, we may split $\frak a$ as $p \frak b + \frak c$ where $\frak b$ and $\frak c$ are non-empty lists and the elements of $\frak c$ are 
all not divisible by $p$.  By Proposition \ref{prop2.2} we obtain 
$$ 
N(\frak a) = \Big( 1-\frac 1p\Big) \Big(N(\frak b) + N(\frak c) \Big) + \frac 1p N(\frak b+\frak c) 
\ge \min_{1\le i <n} \min_{\substack{ |n-2i| \le j\le n \\ n\equiv j \bmod 2}} \Big( \Big( 1- \frac 1p\Big) (G(i) + G(n-i))   + \frac{G(j)}{p} \Big)
$$ 
Given $i$, the minimum over suitable $j$ of $G(j)$ is clearly $\le G(n) \le G(i) + G(n-i)$ by Proposition \ref{prop3.1}.   Therefore we 
see that the right side in the display above is smallest when $p=p_{r+1}$.   We conclude that 
$$ 
G(n) \ge \min\Big( G_r(n), \min_{1\le i < n} \min_{\substack{ |n-2i| \le j\le n \\ n\equiv j \bmod 2}} \Big( \Big( 1- \frac 1{p_{r+1}}\Big) (G(i) + G(n-i))   + \frac{G(j)}{p_{r+1}} \Big)\Big), 
$$
from which \eqref{3.3} follows.  
\end{proof} 

Proposition \ref{prop3.2} sets up a simple recursive procedure to obtain lower bounds for $G_r(n)$ and $G(n)$.  For example, we 
can easily compute $G_1(n)$ exactly.

\begin{lemma} \label{lem3.3}  We have 
\begin{equation} 
\label{3.4} 
G_1(n) = \frac{1}{12} \Big( \frac n3 + \frac 23 \Big( 1- \frac 12 + \frac 14 -\ldots + \frac{(-1)^{n-1}}{2^{n-1}}\Big)\Big),  
\end{equation} 
and this norm is attained for the list $[(-2)^j: \ 0\le j <n ]$.  
\end{lemma}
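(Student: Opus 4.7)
The proof has two components: an explicit norm computation giving the upper bound (and the attainment claim), and a matching inductive lower bound via Proposition~\ref{prop3.2}.

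For the upper bound, apply \eqref{2.1} to the list $\frak{a}_\star = [(-2)^k : 0 \le k \le n-1]$. Since $(a_i, a_j)^2/(a_i a_j) = (-1)^{i+j}/2^{|i-j|}$, one has
\[
12\, N(\frak{a}_\star) \;=\; n + 2\sum_{d=1}^{n-1}(n-d)\bigl(-\tfrac{1}{2}\bigr)^d,
\]
and evaluating the geometric and arithmetic-geometric sums on the right yields $n/3 + (4/9)(1-(-1/2)^n)$, which is precisely $12$ times the right side of \eqref{3.4}. Writing $g(n)$ for that right side, this gives $G_1(n) \le g(n)$ together with attainment on $\frak{a}_\star$.

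For the lower bound, first note from the closed form $g(n) = n/36 + (1/27)(1-(-1/2)^n)$ the key recurrence
\[
g(n) \;=\; \tfrac{1}{2}\bigl(g(n-1) + \tfrac{1}{12}\bigr) + \tfrac{1}{2}\, g(n-2), \qquad n \ge 2,
\]
verified by direct substitution. Then proceed by strong induction on $n$; the case $n = 1$ is immediate. For $n \ge 2$, Proposition~\ref{prop3.2} with $r = 1$ and the inductive hypothesis $G_1(m) = g(m)$ for $m < n$ give
\[
G_1(n) \ge \min\Bigl(\tfrac{n^2}{12},\ \min_i\bigl(g(i)+\tfrac{(n-i)^2}{12}\bigr),\ \min_{(i,j)}\bigl[\tfrac{1}{2}\bigl(g(i)+\tfrac{(n-i)^2}{12}\bigr)+\tfrac{1}{2}\,g(j)\bigr]\Bigr),
\]
over admissible $(i,j)$. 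The first term trivially exceeds $g(n)$; for the second, setting $m = n - i$ reduces the claim to $m^2/12 \ge g(n) - g(n-m)$, which follows from the uniform bound $g(n) - g(n-m) \le m/36 + 1/18$ (extracted from the closed form) combined with the polynomial inequality $3m^2 - m - 2 \ge 0$ for $m \ge 1$.

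The main obstacle is the third family. Taking $(i, j) = (n-1, n-2)$ recovers the recurrence above and gives exactly $g(n)$, so the bound is sharp. For every other admissible $(i, j)$, write $m = n - i$ and $p = n - j$ (so $p$ is even with $2 \le p \le 2\min(m, n-m)$); substituting the closed form and simplifying yields
\[
\tfrac{1}{2}\bigl(g(i)+\tfrac{(n-i)^2}{12}\bigr) + \tfrac{1}{2}\, g(j) - g(n) \;=\; \frac{3m^2 - m - p}{72} + \frac{(-1)^n}{54\cdot 2^n}\bigl(2 - (-2)^m - 2^p\bigr).
\]
For $m = 1$ (which forces $p = 2$) both summands vanish. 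For $m \ge 2$, the first (``polynomial'') summand is at least $m(m-1)/24 \ge 1/12$, while the second (``oscillatory'') summand has absolute value at most $(2 + 2^m + 2^p)/(54 \cdot 2^n) \le 1/36 + 1/54 < 1/12$ (using $2^m \le 2^{n-1}$ and $2^p \le 2^n$). Strict positivity follows, completing the induction.
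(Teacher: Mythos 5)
Your proof is correct and follows essentially the same route as the paper: the upper bound comes from a direct evaluation of $N([(-2)^j:0\le j<n])$ via \eqref{2.1}, and the lower bound is a strong induction using Proposition~\ref{prop3.2} with $r=1$. The only difference is one of detail — the paper invokes monotonicity of the right-hand side of \eqref{3.4} and leaves the two key inequalities as ``easy to check,'' whereas you verify them explicitly (and correctly), identifying the equality case $(i,j)=(n-1,n-2)$ through the recurrence for $g$.
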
 
\begin{proof}  We use induction on $n$; the case $n=1$ is trivial since $N([a]) = 1/12$ for any non-zero $a$.  
Temporarily we define $h(n)$ to be the right hand side of \eqref{3.4}, and we 
note that $h$ is monotone increasing in $n$.   

Applying \eqref{3.2} with $r=1$, and since $h$ is monotone, we obtain 
$$ 
G_1(n) \ge \min_{1 \le i <n} \Big( \frac{n^2}{12}, h(i) +\frac{(n-i)^2}{12}, \frac 12 \Big( h(i) + \frac{(n-i)^2}{12} + h(|n-2i|)\Big)\Big). 
$$ 
 Now it is easy to check that $i^2/12 + h(n-i) \ge h(n)$ for all $1\le i<n$, and that $i^2/12+h(n-i) + h(|n-2i|) \ge 2h(n)$,
  which establishes that $G_1(n) \ge h(n)$.    Direct calculation using \eqref{2.1} shows that equality is attained here for the list $[ (-2)^j: 0\le j <n]$.  
 \end{proof} 
 
 Further one can obtain asymptotically sharp lower bounds for $G(n)$ from Proposition \ref{3.2}, although for small values of $n$ 
 we shall need more precise bounds.  

\begin{proposition} \label{prop3.4} For all $r\ge 0$ we have 
\begin{equation}
\label{3.5} 
G_r(n)  \ge \frac{n}{12} \prod_{j=1}^r \Big( \frac{p_j-1}{p_j+1} \Big).  
\end{equation} 
Moreover, if $2^m \le n <2^{m+1}$ then 
\begin{equation} \label{3.6} 
G(n) \ge \frac{n}{12} \prod_{j=1}^{m} \Big( \frac{p_j-1}{p_j+1}\Big). 
\end{equation} 
\end{proposition}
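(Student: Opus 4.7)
The plan is to establish both bounds by induction using Proposition \ref{prop3.2}. For \eqref{3.5}, set $F_r(n) = \frac{n}{12} \prod_{j=1}^r \frac{p_j-1}{p_j+1}$ and $Q_r = \prod_{j=1}^r (p_j-1)/(p_j+1)$, and proceed by outer induction on $r$ (base $r=0$: $G_0(n) = n^2/12 \ge n/12 = F_0(n)$) with inner induction on $n$ (base $n=1$ immediate). Applying \eqref{3.2} in the induction step reduces matters to three lower bounds. The first, $G_{r-1}(n) \ge F_{r-1}(n) = F_r(n)(p_r+1)/(p_r-1) \ge F_r(n)$, is the outer hypothesis. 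For the second, a brief algebraic manipulation gives
\begin{equation*}
F_r(i) + F_{r-1}(n-i) \;=\; \frac{Q_r}{12\,(p_r-1)} \bigl(n(p_r+1) - 2i\bigr),
\end{equation*}
which exceeds $F_r(n) = nQ_r/12$ for all $i < n$. For the third, substituting the inductive lower bounds yields
\begin{equation*}
\Bigl(1 - \tfrac{1}{p_r}\Bigr)\bigl(F_r(i) + F_{r-1}(n-i)\bigr) + \tfrac{1}{p_r} F_r(j) \;=\; \frac{Q_r}{12\, p_r} \bigl( n(p_r+1) - 2i + j \bigr),
\end{equation*}
and the bound $\ge F_r(n)$ is equivalent to $j \ge 2i - n$, which is exactly what the constraint $j \ge |n-2i|$ in \eqref{3.2} supplies.

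For \eqref{3.6}, define $F(n) = \frac{n}{12}\prod_{j=1}^{m(n)} \frac{p_j-1}{p_j+1}$ with $m(n) = \lfloor \log_2 n \rfloor$, and induct on $n$ ($n=1$ trivial). Given $2^m \le n < 2^{m+1}$, apply \eqref{3.3} with $r = m$. The first alternative is immediate: $G_m(n) \ge F_m(n) = F(n)$ by \eqref{3.5}. For the remaining alternatives I use the symmetry of $G(i)+G(n-i)$ and of the constraint $|n-2i| \le j$ under $i \leftrightarrow n-i$ to assume $i \le n/2$. The crucial observation is that $i \le n/2 < 2^m$ forces $m(i) \le m-1$, which yields the strict improvement $F(i) \ge \frac{iQ}{12} \cdot \frac{p_m+1}{p_m-1}$ with $Q = \prod_{j=1}^m (p_j-1)/(p_j+1)$. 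Combined with $F(n-i) \ge (n-i)Q/12$ this gives
\begin{equation*}
F(i) + F(n-i) \;\ge\; \frac{Q}{12}\Big( n + \frac{2i}{p_m - 1}\Big),
\end{equation*}
which already exceeds $F(n) = nQ/12$ and disposes of the second alternative.

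The main technical point is the third alternative, since $G(j)/p_{m+1}$ only contributes $jQ/(12\,p_{m+1})$ and $j$ may be strictly less than $n$; the slack in the previous display is exactly what compensates. Substituting, using $F(j) \ge jQ/12$, and invoking $j \ge |n-2i| = n - 2i$ (from $i \le n/2$), the required inequality $\ge F(n)$ simplifies after a short calculation to
\begin{equation*}
\frac{2i\,(p_{m+1} - p_m)}{p_m - 1} \;\ge\; 0,
\end{equation*}
which is immediate from $p_{m+1} > p_m \ge 2$ (the edge case $j=0$ occurring only when $2i = n$ is handled by the same inequality). The key reason the argument works is the choice $r = m$ in \eqref{3.3}: any larger $r$ would not secure the improvement $m(i) \le m-1$, and the extra factor $(p_m+1)/(p_m-1)$ it produces is precisely what absorbs the deficiency $n - j \ge 0$ in the third alternative.
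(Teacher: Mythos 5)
Your proof is correct and follows essentially the same route as the paper: a double induction on $r$ and $n$ via Proposition \ref{prop3.2} for \eqref{3.5}, and an induction on $n$ for \eqref{3.6} that exploits $i\le n/2<2^m$ to gain the factor $(p_m+1)/(p_m-1)$ absorbing the deficit $n-j\le 2i$. The only cosmetic difference is that you apply \eqref{3.3} with $r=m$ where the paper takes $r=m-1$; both choices close the argument.
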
 
\begin{proof}   We establish \eqref{3.5} by induction on $r$ and $n$.  When $r=0$ the result holds for all $n$ as $G_0(n) =n^2/12$, and the result is also 
easy when $n=1$ for all values of $r$.   When $r=1$ the result follows from Lemma \ref{lem3.3}.      Therefore by induction hypothesis and \eqref{3.2} we obtain 
$$ 
G_r(n) \prod_{j\le r}\Big(\frac{p_j+1}{p_j-1}\Big) \ge 
\min_{1\le i <n} \min_{\substack{|n-2i| \le j< n\\ j\equiv n\bmod 2}} \Big( \frac{n}{12},  \Big(1-\frac 1{p_r}\Big) \frac{i}{12} +\Big(1+\frac 1{p_r}\Big) \frac{n-i}{12}  + \frac{1}{p_r}  \frac{j}{12}\Big) 
= \frac{n}{12},  
 $$
 which gives \eqref{3.5}. 
 
 The proof of \eqref{3.6} is similar, by induction on $n$.  Suppose $2^m \le n < 2^{m+1}$.   We apply \eqref{3.3} with $r={m-1}$ there.  
 Using the bound just established \eqref{3.5} and the induction hypothesis we find 
 $$ 
 G(n) \prod_{j=1}^{m} \Big( \frac{p_j+1}{p_j-1}\Big) \ge \min_{1\le i < n}  \Big( \frac{n}{12}, \Big(1-\frac{1}{p_m}\Big) 
 \prod_{j=1}^{m} \Big( \frac{p_{j}+1}{p_j-1}\Big) (G(i)+G(n-i)) + \frac{|n-2i|}{12p_m}\Big).
 $$
Now by symmetry we may assume that $i\le n/2 <2^m$ above, in which case the induction hypothesis gives the 
stronger bound $G(i) \ge \frac{i}{12} \prod_{j=1}^{m-1} (\frac{p_j-1}{p_j+1})$.   Thus we obtain 
$$ 
G(n) \prod_{j=1}^{m} \Big( \frac{p_j+1}{p_j-1}\Big) \ge \min_{1\le i \le n/2}  \Big( \frac{n}{12}, \Big(1+ \frac 1{p_m}\Big) \frac{i}{12} + \Big(1-\frac{1}{p_m}\Big)\frac{n-i}{12} + \frac{n-2i}{12 p_m}\Big) 
= \frac{n}{12}. 
$$ 
This completes the proof of the proposition. 
\end{proof}

\begin{corollary} \label{cor3.5}  For $2\le n \le 11$ the following table gives lower bounds for $G(n)$ and $G(n;1)$:  
\begin{center} 
\begin{tabular} {|c || c | c| c| c| c| c| c| c| c| c| }
\hline
$n$ & $2$ & $3$ & $4$ & $5$ & $6$ & $7$ & $8$ & $9$ & $10$ & $11$  \\
\hline 
$G(n)\ge $ &${1}/{12}$ & $ 1/8 $ & $ 1/9 $ & $ 1/6$  & $17/108$ & $5/27$ & $37/216$ & $95/432$ & $2/9$ & $325/1296$ \\ 
\hline 
$G(n;1)\ge $ & $1/6$ & $1/6$ &$ 1/6$ & $7/{36}$ & ${7}/{36}$ & ${17}/{72}$& $ 2/9$ & $ {55}/{216}$ &${55}/{216}$ & ${8}/{27}$ \\ 
\hline
\end{tabular} 
\end{center} 
If $n\ge 11$ is odd then $G(n) >1/4$, and if $n\ge 82$ then $G(n)>1$. 
\end{corollary}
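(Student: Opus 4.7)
The plan is to establish the table entries by induction on $n$, starting from the trivial fact $G(1) = 1/12$ and iterating inequality (3.3) of Proposition~\ref{prop3.2}. For each $n$ with $2 \le n \le 11$, I would select a small parameter $r$ (typically $r=1$ or $r=2$) and minimize the three expressions on the right of (3.3) over the finitely many admissible pairs $(i,j)$ with $1 \le i < n$, $|n-2i| \le j < n$, and $j \equiv n \pmod 2$. Proposition~\ref{prop3.4} controls $G_r(n)$ from below, while the inductive hypothesis supplies lower bounds for $G(i)$, $G(n-i)$, and $G(j)$. For instance, with $n = 4$ and $r = 1$, Proposition~\ref{prop3.4} yields $G_1(4) \ge 7/48$, the splits give $\min_i(G(i) + G(4-i)) \ge 2G(2) \ge 1/6$, and the third expression is minimized at $i=2$, $j=0$ with value $(2/3)(1/6) + 0 = 1/9$, matching the recorded bound; the remaining entries $G(5), \ldots, G(11)$ follow the same pattern. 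The lower bounds on $G(n;1)$ are then immediate from the identity $G(n;1) = \min_{1 \le i < n}(G(i) + G(n-i))$ of Proposition~\ref{prop3.1}, applied to the table values.

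To obtain $G(n) > 1/4$ for odd $n \ge 11$, the base case $n = 11$ is direct from the table since $4 \cdot 325 = 1300 > 1296$. For odd $n \ge 13$ one continues the induction, now tracking even indices as well since the splits mix parities. A useful feature is that the parity condition $j \equiv n \pmod 2$ in (3.3) forces $j \ge 1$ when $n$ is odd, eliminating the degenerate $j = 0$ case that otherwise weakens the even-$n$ bound and so allowing the inequality $>1/4$ to propagate. Similarly, $G(n) > 1$ for $n \ge 82$ follows by continuing the inductive table through $n = 82$: Proposition~\ref{prop3.4} already gives $G(n) \ge (n/12)(10/189) > 1$ as soon as $n \ge 227$ (since then $2^7 \le n$ and $\prod_{p \le 17}(p-1)/(p+1) = 10/189$), so only the finite range $82 \le n \le 226$ requires further iteration of (3.3).

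The main obstacle is numerical bookkeeping rather than conceptual: while each individual application of (3.3) is elementary, iterating through $n = 82$ accumulates a large number of rational computations that is unwieldy by hand. This is precisely the sort of task for which the authors describe using exact-fraction Python routines; nothing deeper than disciplined case analysis is required at any step.
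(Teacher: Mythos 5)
Your overall strategy is the paper's: iterate Proposition \ref{prop3.2}, feed the results into Proposition \ref{prop3.1} for the $G(n;1)$ row, and use \eqref{3.6} to dispose of all but a finite range for the tail claims. But there is a concrete gap in how you propose to handle the term $G_r(n)$ appearing in \eqref{3.3}. You say that Proposition \ref{prop3.4} controls $G_r(n)$ from below, and this is too weak to produce the table. Take $n=8$, where the claimed bound is $G(8)\ge 37/216$. With $r=1$ the third expression in \eqref{3.3} at $i=4$, $j=0$ gives only $\tfrac{2}{3}\cdot\tfrac{2}{9}=\tfrac{4}{27}=\tfrac{32}{216}<\tfrac{37}{216}$, so $r\ge 2$ is forced; but then the minimum in \eqref{3.3} includes $G_2(8)$ (or $G_3(8)$), and Proposition \ref{prop3.4} only yields $G_2(8)\ge \tfrac{8}{12}\cdot\tfrac13\cdot\tfrac12=\tfrac19=\tfrac{24}{216}$ and $G_3(8)\ge\tfrac{2}{27}=\tfrac{16}{216}$, both far below $37/216$. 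The same failure occurs for $n=9,10,11$ (e.g.\ $95/432$ versus Proposition \ref{prop3.4}'s $G_2(9)\ge 1/8$). What is actually needed --- and what the paper does --- is to first compute good lower bounds for $G_1(n)$, $G_2(n)$, $G_3(n)$ by recursively iterating the separate inequality \eqref{3.2} (with Lemma \ref{lem3.3} giving $G_1(n)$ exactly), and only then apply \eqref{3.3} with $r=3$. Your worked example also reflects this confusion: Proposition \ref{prop3.4} gives $G_1(4)\ge 1/9$, not $7/48$; the value $7/48$ is the exact evaluation from Lemma \ref{lem3.3}. (For $n=4$ this happens not to matter since the final bound is $1/9$ anyway, but for $n\ge 5$ the distinction is essential: already $G(5)\ge 1/6$ fails if you replace $G_1(5)=17/96$ by Proposition \ref{prop3.4}'s $5/36$.)

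The rest of your plan is sound once this is repaired. The identity $G(n;1)=\min_{1\le i<n}(G(i)+G(n-i))$ correctly converts the first row into the second, your observation that the parity constraint $j\equiv n\pmod 2$ excludes $j=0$ for odd $n$ is exactly why the odd-$n$ bounds are stronger, and your reduction of the $n\ge 82$ claim to the finite range $82\le n\le 226$ via \eqref{3.6} matches the paper (which cuts off at $256$ using the monotonicity in $m$ of the right side of \eqref{3.6}; for $n\ge 256$ you still need that monotonicity remark, since the relevant product shrinks as $m$ grows). So the fix is localized: replace "Proposition \ref{prop3.4} controls $G_r(n)$" by a genuine recursive computation of $G_1,G_2,G_3$ via \eqref{3.2} before invoking \eqref{3.3}.
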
  
\begin{proof} From Proposition \ref{prop3.4} it follows that if $2^m \le n <2^{m+1}$ then 
$$ 
G(n) \ge \frac{2^m}{12} \prod_{j=1}^{m} \Big( \frac{p_j-1}{p_j+1} \Big). 
$$ 
The right side above is increasing in $m$ for $m\ge 2$, and a small calculation shows that it exceeds $1$ for $m=8$.  
It follows that $G(n) >1$ for all $n\ge 2^8=256$.  

In the range $n\le 256$, we used Proposition \ref{prop3.2} to compute lower bounds for $G_1(n)$, $G_2(n)$ and $G_3(n)$, 
and then used \eqref{3.3} there (with $r=3$) to compute a lower bound for $G(n)$.   Once lower bounds for $G(n)$ have been computed, 
one obtains bounds for $G(n;1)$ using Proposition \ref{prop3.1}.   The values of $G(n)$ and $G(n;1)$ for $2\le n\le 11$ displayed above were 
extracted from this table.  From the table, one readily finds that $G(n)>1$ for $n\ge 82$, and $G(n)>1/4$ for odd $n\ge 11$ (and this also holds 
for even $n\ge 14$).   
\end{proof} 

For $2\le n\le 6$, the bounds for $G(n)$ given in Corollary \ref{cor3.5} are tight, and for $2\le n\le 8$ the values of $G(n;1)$ are exact.  
In Section 9, we shall establish that $G(7)=5/24$ and that $G(8) = 8/45$.  
For large $n$, asymptotically \eqref{3.6} furnishes the lower bound 
\begin{equation} 
\label{3.7} 
G(n) \ge \frac{n}{12} \prod_{j\le \lfloor \log n/\log 2 \rfloor}  \Big( \frac{p_{j}-1}{p_j+1} \Big) 
\sim \frac{n}{12} \Big(\frac{\pi^2 e^{-2\gamma}}{6 (\log \log n)^2}\Big),  
\end{equation} 
since, by Mertens's theorem,  
$$ 
\prod_{p\le x} \Big( \frac{p-1}{p+1} \Big) = \prod_{p\le x} \Big(1 -\frac{1}{p}\Big)^2 \Big(1- \frac 1{p^2}\Big)^{-1} \sim 
\frac{\pi^2}{6} \frac{e^{-2\gamma}}{(\log x)^2}. 
$$ 
In Section 12 we show that this bound is attained asymptotically.

\section{Some Observations and understanding norms for small lengths} 


\noindent In this section we describe an involution on lists which preserves the norm, and 
then compute explicitly the lists of small norm for lengths $2$, $3$, and $4$.  These calculations  
will be used in Section 6 to classify all the factorial ratios with $D=1$ and $K+L =5$.  

\subsection{An involution that preserves norms}   

Given a (non-degenerate) list $\frak a$, we define a new list $\overline{\frak a}$ as follows.  If $a$ is an 
even element in $\frak a$, leave it unaltered in $\overline{\frak a}$.  If $a$ is an odd element in $\frak a$, replace if 
with two elements $2a$, $-a$ in $\overline{\frak a}$.  After performing this operation on all elements in $\frak a$, 
remove any degeneracies from the list $\overline{\frak a}$.  For example, if $\frak a =[30, 1, -10, -15,-6]$ then performing the procedure we 
obtain $[30, 2, -1, -10, -30, 15, -6]$ and removing degeneracies we end up with $\overline{\frak a} = [15, 2, -10, -6,-1]$.  

The relation between  $\overline{\frak a}$ to $\frak a$ becomes clear when one considers the associated periodic functions 
$\frak a(x)$ and $\overline{\frak a}(x)$.  Since $\psi(2x) = \psi(x) +\psi(x+1/2)$ one sees that $\overline{\frak a}(x) = \frak a(x+1/2)$, 
which explains why this operation is an involution, and why the norm is preserved.  

This involution will help in seeing why several lists below have the same norm.  We point out a pleasant exercise to the reader: 
if $\frak a$ is a list corresponding to an integral factorial ratio with $D=1$, then $\overline{\frak a}$ or $-\overline{\frak a}$ will 
also yield a list corresponding to an integral factorial ratio with $D=1$.

\subsection{Length $2$}  If $[a,b]$ is a primitive list of length $2$ then a small calculation gives 
\begin{equation} 
\label{4.1}
N([a,b]) = \frac 1{12} + \frac{1}{12} +\frac{2}{12 ab} = \frac{1}{6} \Big( 1 + \frac{1}{ab}\Big). 
\end{equation}
Note here that the list $[1, -1]$ is excluded as it is degenerate.  Thus the smallest value of the 
norm is attained for the list $[1,-2]$ and so $G(2) =1/12$.   Also, from our formula it is clear that 
$G(2;1)= 1/6$, and this also follows from Proposition \ref{prop3.1}.

\subsection{Length $3$}   Our goal is to classify all primitive lists $\frak a$ of length $3$ with norms below $43/216$.

\begin{lemma}  \label{lem4.1}  If $\frak a$ is a primitive list of length $3$ with $N(\frak a) < 43/216$ then $\frak a$ is 
of the form $[a, -ka, b]$ with $2\le k \le 5$ and $(a,b)=1$. 
\end{lemma}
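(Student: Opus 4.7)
The plan is to translate the norm hypothesis into a bound on cross-terms via formula (2.1). For a list of length three,
$$N(\mathfrak a) = \tfrac{1}{4} + \tfrac{1}{6}\, S, \qquad S = \sum_{i<j}\frac{(a_i,a_j)^2}{a_ia_j},$$
so $N(\mathfrak a) < 43/216$ is equivalent to $S < -11/36$. Writing each cross-term as $\varepsilon_{ij}/m_{ij}$, where $\varepsilon_{ij} = \operatorname{sgn}(a_ia_j)$ and $m_{ij} = |a_ia_j|/(a_i,a_j)^2$ is a positive integer with $m_{ij}\ge 2$ whenever $\varepsilon_{ij}=-1$ (by non-degeneracy), one sees that if all three $a_i$ share the same sign then $S>0$. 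Since replacing $\mathfrak a$ by $-\mathfrak a$ preserves $N$, I may assume without loss of generality that $a_1,a_2>0$ and $a_3<0$, so that
$$S = \tfrac{1}{m_{12}} - \tfrac{1}{m_{13}} - \tfrac{1}{m_{23}}.$$

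The main step is to show that $m_{13},m_{23}\ge 6$ forces $S\ge -11/36$, contradicting the hypothesis. Since $v_p(m_{ij}) = |v_p(a_i)-v_p(a_j)|$ for every prime $p$, the $p$-adic triangle inequality gives $v_p(m_{12})\le v_p(m_{13})+v_p(m_{23})$ for all $p$, i.e.\ the multiplicative bound $m_{12}\le m_{13}\,m_{23}$. Setting $x=1/m_{13}$, $y=1/m_{23}$, both in $(0,1/6]$, I obtain
$$S \;\ge\; \frac{1}{m_{13}\,m_{23}} - \frac{1}{m_{13}} - \frac{1}{m_{23}} \;=\; xy - x - y \;=\; (1-x)(1-y) - 1 \;\ge\; \bigl(\tfrac{5}{6}\bigr)^2 - 1 \;=\; -\tfrac{11}{36}.$$
Consequently $\min(m_{13},m_{23})\le 5$; say $m_{13}\in\{2,3,4,5\}$.

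It remains to extract the divisibility structure. The integers $a_1/(a_1,a_3)$ and $a_3/(a_1,a_3)$ are coprime with absolute-value product equal to $m_{13}\in\{2,3,4,5\}$; the only alternative unordered factorization with both factors exceeding $1$ is $\{2,2\}$ at $m_{13}=4$, which is ruled out by coprimality. Hence one of these integers is $\pm 1$, meaning $a_1\mid a_3$ (or symmetrically), so $a_3=-m_{13}a_1$ with matching signs. Setting $a:=a_1$, $k:=m_{13}\in\{2,3,4,5\}$, $b:=a_2$ yields $\mathfrak a=[a,-ka,b]$, and primitivity of $\mathfrak a$ combined with $\gcd(a_1,a_3)=|a_1|$ forces $(a,b)=\gcd(a_1,a_2)=1$. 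I expect the delicate point to be the borderline case $m_{13}=m_{23}=6$, where the lower bound on $S$ is actually attained (by $m_{12}=m_{13}\,m_{23}=36$, yielding $S=-11/36$ exactly); the strictness of the hypothesis $S<-11/36$ is exactly what excludes this configuration.
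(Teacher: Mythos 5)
Your proof is correct and takes essentially the same route as the paper's, just organized in the contrapositive direction: your multiplicative bound $m_{12}\le m_{13}m_{23}$ is exactly the paper's inequality $(a,c)^2/(ac)\ge (a,b)^2(b,c)^2/(ab^2c)$, and the optimization $(1-x)(1-y)-1\ge -11/36$ for $x,y\in(0,1/6]$ is the same computation that produces the threshold $43/216$ (sharp at $[4,-6,9]$). Your explicit handling of $m_{13}=4$ via coprimality correctly supplies the step the paper leaves implicit.
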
 
\begin{proof}  Suppose $\frak a$ does not contain a pair of elements $a$, $-ka$ with $2\le k \le 5$.  If all three elements of $\frak a$ have the same 
sign, then clearly $N(\frak a) \ge 1/4$.   Without loss of generality we 
can now assume that $\frak a = [a, -b, c]$ with $a$, $b$, $c$ all positive.  From the assumption on $\frak a$ we have that 
$(a,b)^2/(ab) \le 1/6$ and $(b,c)^2/(bc) \le 1/6$, and note that $(a,c)^2/(ac) \ge (a,b)^2 (b,c)^2/(ab b c)$.   From these observations 
we see easily that 
$$ 
N([a,-b,c]) = \frac 1{12} \Big( 3 - \frac{2(a,b)^2}{ab} - \frac{2(b,c)^2}{bc} + \frac{2(a,c)^2}{ac} \Big) \ge\frac 1{12} \Big( 3- \frac 26-\frac 26 +\frac 2{36}\Big) 
= \frac{43}{216}. 
$$ 
This is a sharp bound, attained by $\frak a = [4,-6,9]$. 
\end{proof} 
 
Suppose that $\frak a$ contains two elements of the form $a$, $-pa$ for some prime $p$.  Thus $\frak a = [a, -pa, b]$ with $(a,b)=1$.  
Here we can compute easily that 
\begin{equation} 
\label{4.2} 
N([a,-pa,b]) = \begin{cases} 
\frac{1}{4} - \frac 1{6p} + \frac{p-1}{6pab} &\text{if } p\nmid b \\ 
\frac 14 - \frac{1}{6p} - \frac{p-1}{6ab} &\text{if } p|b.\\
\end{cases}
\end{equation}

Finally, suppose $\frak a$ contains two elements $a$, $-4a$, and the third element is $b$ coprime to $a$.  Here we can compute that 
$$
N([a,-4a,b]) = \begin{cases} 
\frac{5}{24} + \frac{1}{8ab} &\text{if } 2\nmid b\\ 
\frac 5{24} &\text{if } 2 \Vert b\\ 
\frac{5}{24} - \frac{1}{2ab} &\text{if } 4|b. \\
\end{cases} 
$$ 

We give some examples of small norms from these families.   From the family $[a,-2a,b]$ (which is the family of Type A lists): 
$$ 
{\text{ Norm }} \tfrac 18: [1,-2,4]; \ \ \text{Norm }   \tfrac{5}{36}:  [1,-2,-3],\  [2,3,-6],\  [1,-3,6], \ [1,-2,6]);
$$
$$
\text{Norm } \tfrac{7}{48}: [1,-4,8], \ [1,-2,8]; \ \ \text{Norm } \tfrac{3}{20}: 
[1,-2,-5],\   [1,-5,10],\  [1,-2,10],\  [2,5,-10].
$$ 
This family contains the list with smallest norm $G(3)=1/8$, and the limiting value in this family is $1/6$ 
which equals $G(3;1)$.  

From the family $[a,-3a,b]$ (but not included in the previous family), we have 
$$ 
\text{Norm } \tfrac{17}{108}:  [1,-3,9]; \ \ \text{Norm } \tfrac 16: [1,-3,-4], \ [3,4,-12];
$$
$$
\text{Norm } \tfrac{31}{180}:  [1,-3,-5], \ [3,5,-15], \ [1,-3,15], \ [1,-5,15]; \ \ \text{Norm } \tfrac{19}{108}:  [2,-3,9],\  [2, -6,9]; 
$$
$$  
\text{Norm } \tfrac{5}{28}: [1,-3,-7], \ [1,-7,21], \ [1,-3,21], \ [3,7,-21]. 
$$ 

Finally, a couple of small examples from the family $[a,-4a,b]$ (but not included in the previous two families): 
$$ 
\text{Norm } \tfrac{17}{96}: [1,-4,16]; \ \ \text{Norm } \tfrac{11}{60}: \ [1,-4,-5], \ [4,5,-20]. 
$$




\subsection{Length $4$}   First we deal with the Type A lists; these are non-degenerate lists $\frak a$ of the form $[a, -2a, b, -2b]$ with $(a,b)=1$.  
A small calculation gives 
\begin{equation} 
\label{4.3} 
N([a,-2a, b, -2b]) = \begin{cases} 
\frac 16 + \frac{1}{6ab} &\text{ if } 2\nmid ab\\ 
\frac 16-\frac{1}{12ab} &\text{ if } 2|ab.\\ 
\end{cases}
\end{equation}
Note the similarity with norms of the two term list $[a,b]$ (see \eqref{4.1}) and the three term list $[a,-2a, b]$ (see \eqref{4.2} with $p=2$ there), which is explained by the involution described in Section 4.1.   
The smallest norm among these lists is attained for $[1,-2,-3,6]$ which has norm $1/9$, and in fact $G(4)=1/9$.  Further the norms in this family have a limit point $1/6$, 
and in fact $G(4;1) =1/6$.  

The next lemma determines all the Type B lists of length $4$  with norm below $11/60$. 

\begin{lemma} \label{lem4.2}  Suppose $\frak a$ is a primitive list of length $4$ not of the form $[a,-2a,b,-2b]$.  Then $N(\frak a) \ge 11/60$ unless 
$\frak a$ is one of the lists given below.   
$$ 
\text{Norm } \tfrac 16: \ \ [1,-3,6,-12], \  [1,-3,-4,6], \ [1,-3,-4,12], \  [1,-2, 4,-12], \  [1,-2,-3,4],
$$
$$ 
\ \ \ \ [1,-2,-3,12], \ [1,-4,-6,12], \ [2,-3,-4,12], \  [3,-4,-6,12].  
$$ 
$$ 
\text{Norm } \tfrac{19}{108}: \ \ [1,-3,9,-18], \ [1,-2,6,-18], \  [1,-2,-3,9], \ [2,-6,-9,18]. 
$$ 
$$
\text{Norm } \tfrac{17}{96}: [1,-2,4,-16], [1,-4,8,-16]. \qquad \text{Norm } \tfrac{8}{45}: [1,-3,-5,15].
$$
$$
\text{Norm } \tfrac{13}{72}: [1,-3,-4,8], \ [1,-3,12,-24], \  [1,-2,8,-24], \ [3,-6,-8,24].
$$ 
\end{lemma}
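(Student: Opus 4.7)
My plan is to mirror the strategy used in Lemma~\ref{lem4.1}, but now leaning on the $k$-separatedness machinery of Section~2, since length-$4$ lists admit a much richer range of shapes. The argument splits into two regimes depending on the maximal $k$ for which $\frak a$ is $k$-separated.

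For the ``small-separation'' regime, I would fix a threshold $K$ (chosen large enough that every exceptional list of norm below $11/60$ is captured) and invoke Proposition~\ref{prop2.4}: the entries of any at-most-$K$-separated primitive list of length~$4$ are divisors of $\prod_{p^{r}\le K} p^{3r}$. This reduces the problem to finitely many tuples, whose norms are computed directly from \eqref{2.1}. Filtering out Type~A lists $[a,-2a,b,-2b]$ and selecting those with $N(\frak a) < 11/60$ produces the enumerated table. The introduction indicates that such enumerations are performed by computer, so this step is a finite (though non-trivial) calculation.

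For the ``large-separation'' regime $k > K$, the list $\frak a$ admits a splitting $\frak a = B\frak b + C\frak c$ with $(\ell(\frak b), \ell(\frak c)) \in \{(1,3), (2,2)\}$. Proposition~\ref{prop2.2} gives
\[
N(\frak a) = \Big(1-\frac{1}{k}\Big)\bigl(N(\frak b)+N(\frak c)\bigr) + \frac{1}{k}\, N(\widetilde{\frak b}+\widetilde{\frak c}).
\]
In the $(1,3)$ split, Corollary~\ref{cor3.5} yields $N(\frak b)+N(\frak c) \ge \tfrac{1}{12}+\tfrac{1}{8} = \tfrac{5}{24}$; since $N(\widetilde{\frak b}+\widetilde{\frak c}) \ge 0$, once $k$ passes a computable threshold one obtains $N(\frak a) \ge 11/60$. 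In the $(2,2)$ split, the minimum-norm length-$2$ list is $[1,-2]$ with norm $1/12$, and the extreme case $\frak b = \frak c = [1,-2]$ reproduces exactly the Type~A family $[B,-2B,C,-2C]$, which is excluded by hypothesis. In every other configuration one factor has norm at least $1/9$, the next-smallest value of $N$ on length-$2$ lists attained by $[1,-3]$, so that $N(\frak b)+N(\frak c) \ge 7/36$, and again $N(\frak a) \ge 11/60$ for sufficiently large $k$.

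The principal obstacle is reconciling the two regimes: $K$ must be small enough for the enumeration to be tractable, yet large enough that the crude $(2,2)$ bound $(1-1/k)\cdot 7/36 \ge 11/60$ (which requires roughly $k \ge 18$) covers everything not enumerated. A natural way to bridge the gap is to exploit the residual term $N(\widetilde{\frak b}+\widetilde{\frak c})/k$ more carefully in the intermediate range, noting that $\widetilde{\frak b}+\widetilde{\frak c}$ has even length $0$, $2$, or $4$ and applying Corollary~\ref{cor3.5} to bound its norm from below. Alternatively, since the suspected Type~B extremizers are all supported on the primes $2$, $3$, $5$, one can pre-restrict the enumeration to lists with small prime support and separately argue that any length-$4$ list of norm below $11/60$ cannot contain a prime larger than a small explicit bound, by examining the contribution of such an element via \eqref{2.1}.
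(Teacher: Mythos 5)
Your overall two-regime strategy (computer enumeration of lists with bounded separation, plus Proposition \ref{prop2.2} for highly separated lists) is exactly the paper's, and your $(1,3)$ split is fine. The genuine gap is in the $(2,2)$ split, and you have correctly located it but not closed it. The problematic configuration is $\{\frak b,\frak c\}=\{[1,-2],[1,-3]\}$, where $N(\frak b)+N(\frak c)=7/36$ exactly: even using the best possible lower bound $N(\widetilde{\frak b}+\widetilde{\frak c})\ge 1/12$ on the residual (your fix (a)), Proposition \ref{prop2.2} only gives $N(\frak a)\ge 7/36-\tfrac{1}{9k}$, which is below $11/60$ for every $k\le 9$; without the residual you need $k\ge 18$, as you note. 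But raising the enumeration threshold to $K=9$ (let alone $17$) forces the entries to range over the divisors of $2^{9}3^{6}5^{3}7^{3}$ and beyond, and a brute-force search over $4$-element signed lists of such divisors is far out of reach, so neither end of your bridge meets the other. Your fix (b) is not a proof: the assertion that a norm below $11/60$ excludes large prime support is essentially the statement you are trying to prove in this configuration (note $[-1,2,p,-3p]$ has norm $7/36-\tfrac{1}{18p}$, which equals $11/60$ at $p=5$ and only barely exceeds it afterwards). A second, related flaw: in the configuration $\frak b=\frak c=[1,-3]$ the residual list $\widetilde{\frak b}+\widetilde{\frak c}$ can be \emph{empty} (take $B=\pm k$, $C=\mp 1$), so Corollary \ref{cor3.5} gives you nothing there, and at $k=5$ this produces $[1,-3,-5,15]$ with norm $8/45<11/60$ --- a genuine exception in the statement. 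So your blanket claim that every non--Type-A $(2,2)$ configuration eventually satisfies $N(\frak a)\ge 11/60$ is false at $k=5$, and this exception is only caught by your enumeration if you commit to $K\ge 5$, which you never do.

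The paper closes the gap differently and you should adopt its device: when the generic bound fails, the pair $(\frak b,\frak c)$ is pinned down completely, so $\frak a$ is forced into an explicit two-parameter family --- $[a,-3a,b,-3b]$ when $\frak b=\frak c=[1,-3]$, and $[a,-2a,b,-3b]$ when $\{\frak b,\frak c\}=\{[1,-2],[1,-3]\}$ --- whose norm has a closed-form expression in $a$ and $b$ (in the spirit of \eqref{4.2} and \eqref{4.3}). Checking that formula directly over all coprime $a,b$ handles every $k\ge 5$ at once, yields the single new exception $[1,-3,-5,15]$ from the first family and nothing from the second, and lets the computer enumeration stay at the modest threshold $K=4$ (entries dividing $1728$). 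Without this step, or some substitute for it, your argument does not establish the lemma.
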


\begin{proof} Suppose first that $\frak a$ is a primitive list of Type B which is at most $4$ separated.  Then from Proposition \ref{prop2.4} we know that 
the elements of $\frak a$ must be divisors of $4^3 \times 3^3 =1728$.   A small computer calculation shows that there are 
exactly nineteen such lists with norm below $11/60$; these account for all the lists given in the lemma with the exception of the list with norm $8/45$.

Suppose now that $\frak a$ is $k$--separated with $k\ge 5$, and let $\frak b$, $\frak c$, $\widetilde{ \frak b}$, $\widetilde{\frak c}$ have 
the meanings of Definition \ref{def2.1}.  The argument that follows will recur several times in our subsequent work.   
There are  two cases: either $\ell(\frak b) =1$ and $\ell(\frak c)=3$, or  $\ell(\frak b) = \ell(\frak c) =2$.

Consider the former case first.   Here $N(\frak b) = 1/12$, and $\widetilde{\frak b}+\widetilde {\frak c}$ is non-empty and therefore 
has norm at least $1/12$.   Since every list of length $3$ has norm at least $1/8$, we have $N(\frak c) \ge 1/8$, and we conclude from Proposition 
\ref{prop2.2} that 
$$
N(\frak a) \ge \Big(1- \frac 1k\Big) \Big( \frac{1}{12} + \frac{1}{8}\Big) + \frac{1}{k} \frac{1}{12} \ge \frac {11}{60}.
$$

Now consider the case where $\frak b$ and $\frak c$ are both of length $2$.   Here we distinguish two further possibilities: either $\frak b= \frak c$ or $\frak b \neq \frak c$.  
Consider the first possibility.   Since $\frak a$ is given to be of Type B, the case $\frak b=\frak c = [1,-2]$ is ruled out.  If $\frak b= \frak c= [1,-3]$ then 
$\frak a$ is of the form $[a,-3a, b, -3b]$ a simple calculation shows that 
$$ 
N([a,-3a,b,-3b]) = \begin{cases} 
\frac{2}{9} - \frac{2}{9ab} &\text{if } 3|ab\\ 
\frac 29+\frac{2}{9ab} &\text{if  }3\nmid ab. \\
\end{cases} 
$$
This produces one new example with norm below $11/60$; namely the list $[1,-3,-5,15]$ with norm $8/45$.  
 If $\frak b=\frak c$ is not $[1,-2]$ or $[1,-3]$ then its norm is at least $1/8$, 
 and by Proposition \ref{prop2.2} we have $N(\frak a) \ge \frac 45 (\frac 18 +\frac 18) =\frac 15$.

Finally suppose $\frak b \neq \frak c$,  in which case $N(\widetilde{\frak b}+\widetilde{\frak c}) \ge 1/12$.      If now $N(\frak b) + N(\frak c) 
\ge 5/24$ then it follows that $N(\frak a) \ge \frac 45 \frac 5{24} +\frac 1{60} = \frac{11}{60}$.   On the other hand, if $N(\frak b) + N(\frak c)< 5/24$, 
then we must have one of $\frak b$ or $\frak c$ being $[1,-2]$ and the other being $[1,-3]$, so that $\frak a$ is of the form $[a,-2a, b,-3b]$.  
Computing this norm, we can check that there are no new examples of lists with norm below $11/60$.  
 \end{proof}


  \section{Integral factorial ratios for $D=1$:  Toward the proof of Theorem \ref{thm1.1}}  
  

\noindent  In this section, we prepare the foundations for the proof of Theorem \ref{thm1.1}.   Here $D=L-K=1$, and we are looking for primitive 
tuples $(a_1, \ldots, a_K, b_1, \ldots, b_{K+1})$ that lead to integral factorial ratios.  Recall from the introduction that we can associate to such a tuple 
the primitive list $\frak a = [a_1, \ldots, a_K, -b_1, \ldots, -b_{K+1}]$ which has odd length $2K+1$, and has sum  $s(\frak a)= 0$.  We remarked in the introduction 
that $\frak a(x)$ takes the values $-1/2$, $1/2$ (away from finitely many points) so that $N(\frak a) = 1/4$.

In fact, if $\frak a$ is any primitive list of odd length and with sum $s(\frak a) =0$, then the values of $\frak a(x)$ may be seen to be in ${\Bbb Z}+1/2$, 
so that $N(\frak a) \ge 1/4$.   If $N(\frak a) =1/4$, then it follows that $\frak a(x) = \pm 1/2$ (apart from finitely many points).  Further, if $\frak a$ has 
$K$ positive entries and $L$ negative entries, then for suitably small but positive $x$, one has $\frak a(x) = (K-L)/2$, so that one necessarily has $|L-K|=1$.  
By flipping the sign of $\frak a$ if necessary, we see that primitive lists of odd length $2K+1$, with sum $0$, and norm $1/4$ correspond exactly to integral factorial ratios 
with $K$ factors in the numerator, and $K+1$ factors in the denominator.

Thus from now on we focus on the equivalent problem of determining all primitive lists $\frak a$ with $\ell(\frak a)$ odd, $s(\frak a) =0$ and $N(\frak a) = \frac 14$.  
If $\ell(\frak a) =3$, then the condition $s(\frak a)=0$ means that $\frak a$ is of the form $[a+b, -a, -b]$ for coprime positive integers $a$ and $b$.  This list has 
norm $\tfrac 14$ and corresponds to the integrality of the binomial coefficients $\binom{(a+b)n}{an}$.   If $\ell(\frak a) \ge 11$ is odd, then by Corollary \ref{cor3.5}, 
$N(\frak a) > \frac  14$, and so there are no integral factorial ratios with $D=1$ and $K\ge 5$.  Thus we are left with the cases $\ell(\frak a) = 5$, $7$, $9$, and 
in Sections 6, 8, and 10 we shall classify all the integral factorial ratios with $D=1$ and these values for $2K+1$.  These results (which will fully establish Theorem 
\ref{thm1.1}) will rely on an understanding of lists with small norm (described in Sections 4, 7, and 9) obtained by combining our ideas in Sections 2 and 3 together with 
some computer calculations.   In this section we give a quick proof of a qualitative version of Theorem \ref{thm1.1}, showing that there are only finitely many sporadic 
examples.  

\begin{theorem} \label{thm5.1} In the case $D=1$, apart from the three infinite families given in Theorem \ref{thm1.1} there are only finitely many 
sporadic examples of primitive integral factorial ratios.  
\end{theorem}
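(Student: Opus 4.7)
The plan is to combine the finiteness statement of Proposition \ref{prop2.4} with the norm decomposition of Proposition \ref{prop2.2} to reduce the theorem to a finite enumeration. By Corollary \ref{cor3.5}, every primitive list of odd length $n\ge 11$ has $G(n)>1/4$, ruling these lengths out; and at length $n=3$ the only primitive list with $s(\frak a)=0$ is $[a+b,-a,-b]$ with $\gcd(a,b)=1$, giving the binomial family.  So it suffices to show, for each $n\in\{5,7,9\}$, that only finitely many primitive lists of length $n$ with $s(\frak a)=0$ and $N(\frak a)=1/4$ lie outside the two known length-$5$ infinite families.

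By Proposition \ref{prop2.4}, for any fixed $k_0$ the primitive lists of length $n$ that are at most $k_0$--separated are all divisors of an explicit integer, and so finite in number.  I therefore only need to control lists of arbitrarily high separation.  If $\frak a=B\frak b+C\frak c$ is $k$--separated, Proposition \ref{prop2.2} gives
$$
\tfrac 14 \;=\; N(\frak a) \;=\; \Big(1-\tfrac 1k\Big)\big(N(\frak b)+N(\frak c)\big) + \tfrac 1k\,N(\widetilde{\frak b}+\widetilde{\frak c}) \;\ge\; \Big(1-\tfrac 1k\Big)\big(G(\ell(\frak b))+G(\ell(\frak c))\big).
$$
If there were infinitely many sporadic solutions $\frak a_j$, I could extract a subsequence with separations $k_j\to\infty$ and a fixed splitting shape $(\ell(\frak b_j),\ell(\frak c_j))$; then in the limit $G(\ell(\frak b))+G(\ell(\frak c))\le 1/4$, and since $N(\widetilde{\frak b_j}+\widetilde{\frak c_j})$ stays bounded (its length is at most $n$), one also obtains $N(\frak b_j)+N(\frak c_j)\to 1/4$.

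For $n=9$ the table in Corollary \ref{cor3.5} shows that every admissible shape satisfies $G(\ell_1)+G(\ell_2)>1/4$ strictly (e.g.\ $G(4)+G(5)\ge 5/18$), so sufficiently separated lists of length $9$ cannot achieve $N(\frak a)=1/4$, and finiteness follows at once.  For $n=5,7$ some shapes satisfy the inequality only weakly, so one must use the extra information that $N(\frak b_j)+N(\frak c_j)\to 1/4$ exactly, the sum-zero compatibility $B_j s(\frak b_j)+C_j s(\frak c_j)=0$, and the gcd constraints from Definition \ref{def2.1}.  Together with the explicit classifications of small-norm lists of lengths $\le 4$ from Section 4 (extended to lengths $5$ and $6$ in Section 7), these constraints drive $\frak b_j$ and $\frak c_j$ into a finite number of one-parameter configurations.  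For $n=5$ these configurations reproduce precisely the two infinite families $[2a,2b,-a,-b,-(a+b)]$ and $[2a,b,-a,-2b,b-a]$; for $n=7$ I must check that no infinite family survives.

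The main obstacle will be the length-$7$ analysis, driven by the critical shape $(\ell(\frak b),\ell(\frak c))=(2,5)$, which saturates $G(2)+G(5)=1/4$ and so forces $\frak b=[1,-2]$ together with $\frak c$ an extremal length-$5$ list of norm exactly $1/6$.  Combined with the non-saturating shapes $(1,6)$ and $(3,4)$, I must verify that no continuous family of compatible $(B,C,\frak b,\frak c)$ can simultaneously produce $s(\frak a)=0$ and $N(\frak a)=1/4$.  Any such continuum would furnish a genuine infinite length-$7$ family, which the classification of small-norm lists in Sections 4 and 7 (together with a short case analysis to be carried out in Section 8) will show cannot occur, leaving only a finite pool of candidates for the explicit sporadic enumeration.
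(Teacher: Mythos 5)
Your overall strategy is the paper's: reduce to highly separated lists via Proposition \ref{prop2.4}, split $\frak a = B\frak b + C\frak c$ and use Proposition \ref{prop2.2} to force $N(\frak b)+N(\frak c) \le \tfrac14 + \epsilon$, then invoke the classifications of short lists of small norm. Your treatment of length $9$ is complete and matches the paper (all shapes have $G(\ell_1)+G(\ell_2) \ge 55/216 > \tfrac14$). But there is a genuine gap at the heart of the reduction for lengths $5$ and $7$: finiteness of the pool of admissible pairs $(\frak b,\frak c)$ does \emph{not} by itself give finiteness of $\frak a$, because each pair can a priori be recombined with infinitely many coprime coefficient pairs $(B,C)$. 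You write down the relation $B\,s(\frak b)+C\,s(\frak c)=0$ but never close the loop. The paper's Lemma \ref{lem5.2} does exactly this: if $s(\frak b)$ and $s(\frak c)$ are both nonzero, primitivity forces $\pm B = s(\frak c)/(s(\frak b),s(\frak c))$ and $\pm C = s(\frak b)/(s(\frak b),s(\frak c))$, so each pair yields at most one $\frak a$; and the degenerate case $s(\frak b)=s(\frak c)=0$ (where $B,C$ would be unconstrained) is impossible because $\ell(\frak a)$ odd forces one of $\frak b,\frak c$ to have odd length and sum zero, hence norm $\ge \tfrac14$, giving $N(\frak a)>\tfrac14$. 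Without this lemma your "finite number of one-parameter configurations" conclusion does not follow.

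A secondary inaccuracy: for $n=7$ you describe the shapes $(1,6)$ and $(3,4)$ as handled alongside the saturating shape $(2,5)$, but their $G$-sums are $\tfrac1{12}+\tfrac{17}{108}=\tfrac{13}{54}$ and $\tfrac18+\tfrac19=\tfrac{17}{72}$, both \emph{below} $\tfrac14$, so the limiting inequality $N(\frak a)\ge(1-\tfrac1k)(G(\ell_1)+G(\ell_2))$ gives nothing for them. What is actually needed (and what the paper uses) are the $G(\cdot;1)$ bounds of Corollary \ref{cor3.5}: e.g.\ $G(6;1)\ge \tfrac7{36}$ means that after discarding finitely many choices of $\frak c$ the sum $\tfrac1{12}+N(\frak c)$ exceeds $\tfrac14$, and similarly $G(3;1),G(4;1)\ge\tfrac16$ confine both $N(\frak b)$ and $N(\frak c)$ to ranges containing only finitely many lists in the $(3,4)$ case. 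Your appeal to "classifications of small-norm lists" could be shaped into this, but as written the logic for these shapes is missing.
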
  

To prove Theorem \ref{thm5.1}, and for our subsequent work toward Theorem \ref{thm1.1}, we require the following lemma, which 
adds to Definition \ref{def2.1} in the situation where $\ell(\frak a)$ is odd and $s(\frak a)=0$.  
\begin{lemma} \label{lem5.2}  Suppose $\frak a$ is a primitive list with $s(\frak a)=0$, $\ell(\frak a)$ odd, and $N(\frak a) \le \frac 14$.   
Suppose $\frak a$ is $k$--separated, with $\frak b$, $\frak c$, $B$, $C$, $\widetilde{\frak b}$ and $\widetilde{\frak c}$ as in Definition \ref{def2.1}.  
Then $s(\frak b)$ and $s(\frak c)$ are non-zero, and one has 
$$ 
\pm B = -\frac{s({\frak c})}{(s({\frak b}),s({\frak c}))}, \qquad \pm C = \frac{s({\frak b})}{(s({\frak b}),s({\frak c}))}.
$$
\end{lemma}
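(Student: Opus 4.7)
The plan is to first derive the linear relation $B \cdot s(\frak b) + C \cdot s(\frak c) = 0$ from the vanishing of $s(\frak a)$, then show both terms on the left must individually be nonzero by combining Proposition \ref{prop2.2} with the lower bound $N \geq 1/4$ for odd-length lists of sum zero, and finally extract the explicit formulas for $B$ and $C$ using $(B,C)=1$. The relation itself is immediate: concatenation followed by degeneracy removal preserves the sum, since each removed pair $\{a,-a\}$ contributes zero, so $0 = s(\frak a) = B \cdot s(\frak b) + C \cdot s(\frak c)$.

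The heart of the argument, and the main obstacle, is the non-vanishing of $s(\frak b)$ and $s(\frak c)$. Suppose for contradiction that $s(\frak b) = 0$; since $C \neq 0$, the relation forces $s(\frak c) = 0$ as well. Because $\ell(\frak a) = \ell(\frak b) + \ell(\frak c) - 2d$ for some number $d \geq 0$ of degeneracies, and $\ell(\frak a)$ is odd, exactly one of $\ell(\frak b), \ell(\frak c)$ is odd; say $\ell(\frak b)$. As noted at the start of Section 5, any primitive list of odd length with zero sum satisfies $N \geq 1/4$, so $N(\frak b) \geq 1/4$. Next, both $\widetilde{\frak b}$ and $\widetilde{\frak c}$ still have zero sum (they differ from $B\frak b$, $C\frak c$ only by rescaling one of them by $1/k$, which rescales an already-zero sum), so their concatenation $\widetilde{\frak b}+\widetilde{\frak c}$ has zero sum; moreover its length shares parity with $\ell(\frak a)$ and so is again odd and nonempty, giving $N(\widetilde{\frak b}+\widetilde{\frak c}) \geq 1/4$. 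Together with $N(\frak c) > 0$ (which holds because $\frak c$ is primitive and non-degenerate), Proposition \ref{prop2.2} then yields
$$ N(\frak a) \;\geq\; \Bigl(1-\tfrac{1}{k}\Bigr)\Bigl(\tfrac{1}{4} + N(\frak c)\Bigr) + \tfrac{1}{k}\cdot\tfrac{1}{4} \;=\; \tfrac{1}{4} + \Bigl(1-\tfrac{1}{k}\Bigr) N(\frak c) \;>\; \tfrac{1}{4}, $$
contradicting $N(\frak a) \leq 1/4$. The case $s(\frak c)=0$ is symmetric, so both sums are indeed nonzero.

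With both sums nonzero I would set $d = (s(\frak b), s(\frak c))$ and write $s(\frak b) = d\alpha$, $s(\frak c) = d\beta$ with $(\alpha,\beta) = 1$, so the relation becomes $B\alpha = -C\beta$. Since $(B,C) = 1$ forces $B \mid \beta$, writing $\beta = Bm$ gives $\alpha = -Cm$, and then $(\alpha,\beta) = |m|(B,C) = |m|$ together with $(\alpha,\beta) = 1$ pins down $m = \pm 1$. In either case one obtains $\pm B = -s(\frak c)/d$ and $\pm C = s(\frak b)/d$ with matching signs, which is the statement of the lemma.
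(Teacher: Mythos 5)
Your proposal is correct and follows essentially the same route as the paper: derive $Bs(\frak b)+Cs(\frak c)=0$, rule out $s(\frak b)=s(\frak c)=0$ by noting that the odd-length piece and the odd-length, sum-zero list $\widetilde{\frak b}+\widetilde{\frak c}$ each have norm at least $\tfrac14$, and then feed these bounds into Proposition \ref{prop2.2} to contradict $N(\frak a)\le\tfrac14$. The only cosmetic differences are that the paper uses $N(\frak c)\ge\tfrac1{12}$ where you use $N(\frak c)>0$, and that you write out the final gcd computation that the paper leaves implicit.
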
 
\begin{proof}  Since $s(\frak a) = Bs(\frak b) + C s(\frak c) =0$ and $\frak a$ is primitive, if we knew that $s(\frak b)$ and $s(\frak c)$ are non-zero, then 
the values of $B$ and $C$ would be specified as stated in the lemma.  Further, note that if one of $s(\frak b)$ or $s(\frak c)$ is 
zero, then the other must also be zero.  

Suppose then that $s(\frak b) = s(\frak c) =0$.  Since $\ell(\frak a) = \ell(\frak b) + \ell(\frak c)$ is odd, one of $\ell(\frak b)$ or $\ell(\frak c)$ must be odd and 
the other even.  As remarked above, a list of odd length with sum zero has norm at least $1/4$.  It follows that $N(\frak b) + N(\frak c) \ge \frac 14 + \frac 1{12}$.  
Further $\widetilde{\frak b} + \widetilde{\frak c}$ is also a list of odd length with sum zero, and thus has norm at least $\frac 14$.  By Proposition \ref{prop2.2} we 
conclude that 
$$ 
N(\frak a) = \Big( 1- \frac 1k \Big) \big( N(\frak b) + N(\frak c) \big) + \frac 1k N (\widetilde{\frak b} + \widetilde{\frak c}) 
\ge \Big( 1-\frac 1k\Big) \Big( \frac 14 +\frac{1}{12} \Big) +\frac{1}{k} \frac{1}{4} > \frac 14,
$$ 
which contradicts our assumption that $N(\frak a) \le \frac 14$.   
\end{proof}

\begin{proof}[Proof of Theorem \ref{thm5.1}]  We are interested in primitive lists $\frak a$ with $s(\frak a)=0$, $\ell(\frak a) =5$, $7$, or $9$, and $N(\frak a) =\frac 14$.  
We may assume that $\frak a$ is $k$--separated with $k$ sufficiently large, since by Proposition \ref{prop2.4} there are only finitely many lists with specified 
length and bounded separation.  Given that $\frak a$ is $k$--separated, let $\frak b$ and $\frak c$ have the meanings of Definition \ref{def2.1}.  Given a sufficiently small 
positive $\epsilon$, by ensuring that $k$ is large enough, we may assume that $N(\frak b) + N(\frak c) \le \tfrac 14+\epsilon$.  If $\frak b$ and $\frak c$ are 
specified primitive lists, then Lemma \ref{lem5.2} shows that there is at most one way to combine these to give $\frak a$.

Suppose now that $\ell(\frak a)=5$.  The Type A lists $[a, -2a, b, -2b, (a+b)]$ account for the infinite families of length $5$ given in Theorem \ref{thm1.1}, and 
we now suppose that $\frak a$ is of Type B.   Then at least one of $\frak b$ or $\frak c$ must also be of Type B.  There now arise two possibilities:  $\ell(\frak b)=1$ and 
$\ell(\frak c) =4$, or $\ell(\frak b)=2$ and $\ell(\frak c)=3$.   

If $\ell(\frak b) =1$ (so $\frak b =[1]$) then 
$\frak c$ must be a Type B list of length $4$, and by Lemma \ref{lem4.2} there are only finitely many such $\frak c$ with norm below $11/60$.  Thus in this 
case there are only finitely many choices for $\frak b$ and $\frak c$ with $N(\frak b) + N(\frak c) \le 1/12 +11/60=1/4+1/60$, whence there are only finitely many $\frak a$ 
arising from this case.   

Now consider the possibility $\ell(\frak b) =2$ and $\ell(\frak c)=3$.  If $\frak b=[1,-2]$, then $\frak c$ must be of Type B, and from our 
work in Section 4.3, there are only finitely many Type B lists $\frak c$ with norm below $31/180$.  Thus in this situation also there are only finitely many 
choices for $\frak b$ and $\frak c$ with $N(\frak b)+N(\frak c) \le \frac 14+\epsilon$, and hence for $\frak a$.  Finally suppose $\frak b \neq [1,-2]$ so that $N(\frak b) \ge 1/9$.  From our work in Section 4.3, 
there are only finitely many lists $\frak c$ with norm below $1/6-\epsilon$ for any $\epsilon >0$.  Once again this implies that there are only finitely many choices 
for $\frak b$ and $\frak c$ with $N(\frak b) + N(\frak c) \le \frac 14+\epsilon$, which completes our proof for the case $\ell(\frak a) =5$.

We now turn to the situation $\ell(\frak a) =7$, where there are three possibilities $\ell(\frak b)=1$, $\ell(\frak c)=6$; or $\ell(\frak b)=2$, $\ell(\frak c)=5$; or 
$\ell(\frak b)=3$, $\ell(\frak c)=4$.  In the first case, we have $\frak b=[1]$, and by Corollary \ref{cor3.5} we know that $G(6;1) \ge 7/{36}$ which 
means that there are only finitely many $\frak c$ with $N(\frak c) \le 7/36 -\epsilon$.  But this means that there are only finitely many 
choices for $\frak b$ and $\frak c$ with $N(\frak b) + N(\frak c) \le 1/12+ 7/36-\epsilon = \frac 14 +\frac{1}{36}-\epsilon$.  

Now take the second case $\ell(\frak b)=2$ and $\ell(\frak c)=5$.  If $\frak b=[1,-2]$ with norm $1/12$, then again by Corollary \ref{cor3.5} we know that $G(5;1) \ge 7/36$, 
so that again there are only finitely many choices for $\frak c$ with $N(\frak b) + N(\frak c) \le 1/12 +7/36 -\epsilon$.  If $\frak b \neq [1,-2]$ then $N(\frak b)\ge 1/9$, and 
$N(\frak c) \ge 1/6$ by Corollary \ref{cor3.5}.  So in this case $N(\frak b) + N(\frak c) \ge 1/9+1/6 =\frac 14+\frac{1}{36}$, and again we are done.  

In the third case $\ell(\frak b) =3$ and $\ell(\frak c) =4$, since $N(\frak b) \ge 1/8$ we must have $N(\frak c) \le 1/8+\epsilon$.  Similarly, since $N(\frak c) \ge 1/9$ 
we must have $N(\frak b) \le 5/36+\epsilon$.  Now by Corollary \ref{cor3.5}, or by our more precise work in Sections 4.3 and 4.4, we know that 
there are only finitely many $\frak b$ with $1/8 \le N(\frak b) \le 5/36+\epsilon$, and finitely many $\frak c$ with $1/9 \le N(\frak c)\le 1/8+\epsilon$.  This 
finishes the proof for $\ell(\frak a) =7$.  

Lastly, suppose $\ell(\frak a) =9$.  Here we can even ignore the condition that $s(\frak a)=0$, because there are only finitely many primitive lists of 
length $9$ with norm at most $\tfrac 14$.  This follows from the table in Corollary \ref{cor3.5}, which reveals that $G(9;1)\ge 55/216 > \frac 14$.  
 \end{proof}

\section{Classifying integral factorial ratios of length $5$}  


\subsection{Finding the examples}   Our goal is to find all primitive lists $\frak a$ with  length $5$, with $s(\frak a)=0$ and $N(\frak a)=\frac 14$.  
 Recall that there are infinite families of examples arising from 
$$ 
\frak a =[a, -2a, b, -2b, a+b]
$$ 
where $a$ and $b$ are integers with $(a,b)=1$ (and omitting degenerate examples).  
We now find the twenty nine sporadic examples of integral factorial ratios of length $5$, and then demonstrate 
in the next two subsections that there are no further examples.  
 
Now consider the two parameter family 
$$ 
\frak a= [a,-2a, b, -3b, c=a+2b], \qquad \text{with } (a,b)=1. 
$$ 
Direct calculation gives, making use of $(a,b)= (b,c)=1$ and $(a,c) = (a,2)$ and so on,  that $N(\frak a)$ equals 
$$ 
\frac 14 + \frac{1}{12} \Big(\frac 13+ \frac{2}{ab} \Big( 1 - \frac{(a,3)^2}{3} - \frac{(2,b)^2}{2} +\frac{(2,b)^2(3,a)^2}{6}\Big) 
+\frac{2}{ac} \Big( (a,2)^2 -\frac{(2a,c)^2}{2}\Big) + \frac 2{bc} \Big( 1- \frac{(3,c)^2}{3}\Big) \Big). 
$$    
A little calculation, considering the possible values of $(a,3)$, $(b,2)$ etc, gives the lower bound 
$$ 
N(\frak a) \ge \frac 14 + \frac{1}{12} \Big( \frac 13 -\frac{4}{|ab|} - \frac{4}{|bc|} - \frac{8}{|ac|}\Big), 
$$ 
so that in order to get an integral factorial ratio, one must have $|ab| \le 36$, or $|bc|\le 36$, or $|ac|\le 72$.  Since $|a|$, $|b|$, and $|c|$ 
are all integers at least $1$, the condition $|ab| \le 36$ implies that $|a|$ and $|b|$ are at most $36$.  Similarly $|bc| \le 36$ implies 
$|b| \le 36$ and $|c| = |a+2b| \le 36$ which together imply $|a| \le 108$.  Finally $|ac|\le 72$ implies that $|a|$ and $|c|$ are below $72$, 
and so $|b| \le (|a|+|c|)/2 \le 72$.  Thus in all cases we see that $|a|\le 108$ and $|b|\le 72$.   It is a simple matter to check these cases on 
a computer, and leads to the following nineteen examples: 
$$
[1,15, -2,-5,-9]; \qquad [1, 9, -2,-3,-5]; \qquad [2,12,-1,-4,-9]; \qquad [4,9,-2,-3,-8]
$$
$$
[4, 15,-2, -5,-12]; \qquad [3, 12, -4, -5, -6]; \qquad [4, 15, -5, -6, -8]; \qquad   [3, 20, -1, -10, -12]
$$ 
$$ 
[5, 12, -10, -4, -3]; \qquad [6, 10, -2, -5,-9]; \qquad [1,12,-3,-4,-6]; \qquad [3, 12, -1, -6,-8] 
$$ 
$$ 
[7,15,-3,-5,-14]; \qquad [3,14, -1, -7, -9]; \qquad [1, 14,-3,-5,-7]; \qquad [4, 18, -1, -9, -12]
$$ 
$$
[2, 18, -5, -6, -9]; \qquad [3, 20, -4, -9, -10]; \qquad [1, 20, -3, -8, -10]. 
$$

Next we considered all lists $\frak{a} = [a,b,c,d, e=-(a+b+c+d)]$ where four of the elements (say, $a$, $b$, $c$, and $d$) are 
divisors of $2^6 \times 3^3 \times 5^3$.  This gives the remaining ten examples: 
$$ 
[1,12, -2, -3,-8], \qquad  [2,12, -3, -4, -7], \qquad [1, 18, -4,- 6, -9], \qquad [1, 20, -4, -7, -10], 
$$
$$ 
 [2, 15, -3, -4, -10], \qquad  [1, 24,-5, -8,-12], \qquad [2, 15,-1, -6, -10], \qquad [1,15,-3, -5,-8], 
$$ 
$$
[1,30, -6, -10, -15]; \qquad [2,9,-1,-4,-6].
$$ 
  
Note that our check above includes all lists $\frak a$ that are at most $6$--separated.  We may see this as in Proposition \ref{prop2.4} (and see the remarks following it), and noting 
further that at least two of the elements of $\frak a$ must be indivisible by any prime because the sum of the elements of $\frak a$ is zero.  
Indeed a list that is at most $6$--separated must have all elements being divisors of $2^6 \times 3^3 \times 5^3$, and in our check we did not 
insist that $e$ must also be a divisor of $2^6 \times 3^3 \times 5^3$.

 To summarize, we may assume that $\frak a$ is $k \ge 7$ separated, and that $\frak a$ is not of the form $[a,-2a,b,-2b,a+b]$ or of the form $[a,-2a,b,-3b,a+2b]$, and we 
 wish to show that there are no further examples apart from the twenty nine mentioned above.   Below $\frak b$, $\frak c$, $\widetilde{\frak b}$ and $\widetilde{ \frak c}$ have the 
 meanings assigned in Definition \ref{def2.1}. 
 
\subsection{The case $\ell(\frak b)=2$ and $\ell(\frak c)=3$}   Since $\frak a$ is at least $7$--separated, we see that 
$$
\tfrac 14 =N(\frak a) \ge \tfrac 67 (N(\frak b) + N(\frak c)) + \tfrac{1}{84},
$$ 
so that 
\begin{equation} 
\label{6.1} 
N(\frak b) + N( \frak c) \le \tfrac{5}{18}.
\end{equation} 

If $\frak b = [1,-2]$ then \eqref{6.1} gives $N(\frak c) \le \frac{7}{36}$, and Lemma \ref{lem4.1} shows that $\frak c$ must be 
of the form $[a,-\ell a, b]$ with $2\le \ell \le 5$.   Further, the possibilities $\ell =2$ and $3$ are excluded from our work in the 
Section 6.1 above.   Thus by our work in Section 4.3 there are only a small number of possibilities for $\frak c$;  for example, in the case 
$\ell=4$ we only need to consider lists $[a,-4a,b]$ with $|ab|\le 36$ when $4|b$, and with $|ab|\le 9$ when $b$ is odd, and there are no possibilities 
with $2 \Vert b$.    Checking these cases we obtain no new examples of five term factorial ratios.  

If $\frak b =[1,-3]$ (with norm $1/9$) then \eqref{6.1} gives $N(\frak c) \le \frac 16$.   The possibility that $\frak c$ is of the form $[a,-2a,b]$ 
is excluded, and therefore by our work in Section 4.3 there are only three possibilities for $\frak c$, which are easily checked to produce 
no new examples. 

Finally we may assume that $N(\frak b) \ge \tfrac 18$ which implies that $N(\frak c) \le \tfrac{11}{72}$.  Further, since $N(\frak c) \ge \frac 18$ we 
also have that $N(\frak b) \le \frac{11}{72}$.  Thus, by the work in Sections 4.3 and 4.4, 
 there are only a small number of possibilities for $\frak b$ and for $\frak c$, and these are easily checked.

\subsection{The case $\frak b =[1]$ and $\ell(\frak c)=4$}   From our work in Section 6.1, we may assume 
that not all the elements of $\frak c$ are divisors of $2^6 \times 3^3 \times 5^3$, so that the list $\frak c$ must be at least $7$--separated.  
Furthermore, since $\ell(\widetilde{\frak b} + \widetilde{\frak c}) \ge 3$ we must have 
$$ 
\tfrac 14 = N(\frak a) \ge \tfrac 67 (\tfrac{1}{12}  + N(\frak c)) + \tfrac{1}{56},
$$  
so that $N(\frak c) \le \frac{3}{16}$.   The next lemma shows that there are no lists $\frak c$ that are at least $7$--separated that are 
not of the form $[a,-2a, b,-2b]$ or $[a,-2a, b,-3b]$ and with norm at most $\frac 3{16}$, and this will finish our classification of the five term factorial ratios. 

\begin{lemma} \label{lem6.1}  If $\frak c$ is a primitive list of length $4$ which is at least $7$--separated and $\frak c$ is 
not of the form $[a,-2a, b,-2b]$ or $[a,-2a, b,-3b]$ then $N(\frak c) > \frac{3}{16}$.  
\end{lemma}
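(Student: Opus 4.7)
The plan is to apply Proposition~\ref{prop2.2} to a $k$--separation $\frak c = B\frak b' + C\frak c'$ with $k \ge 7$, which yields
$$
N(\frak c) = \Big(1 - \tfrac{1}{k}\Big)\bigl(N(\frak b') + N(\frak c')\bigr) + \tfrac{1}{k} N(\widetilde{\frak b'} + \widetilde{\frak c'}) \ge \tfrac{6}{7}\bigl(N(\frak b') + N(\frak c')\bigr),
$$
and to split on the lengths of $\frak b'$ and $\frak c'$. Since $\ell(\frak c)=4$, either $\{\ell(\frak b'),\ell(\frak c')\}=\{1,3\}$ or $\ell(\frak b')=\ell(\frak c')=2$. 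In the first case, $N(\frak b')+N(\frak c') \ge 1/12 + G(3) = 5/24$ by Corollary~\ref{cor3.5}, and the parity/length bound of Proposition~\ref{prop2.2} gives $\ell(\widetilde{\frak b'}+\widetilde{\frak c'})\ge 2$, hence $N(\widetilde{\frak b'}+\widetilde{\frak c'})\ge G(2)=1/12$; combining these yields $N(\frak c)\ge (6/7)(5/24)+(1/7)(1/12) = 4/21 > 3/16$.

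For the case $\ell(\frak b')=\ell(\frak c')=2$, I would use the length-$2$ norm formula from Section~4.2: every primitive length-$2$ list other than $\pm[1,-2]$ (norm $1/12$) and $\pm[1,-3]$ (norm $1/9$) has norm at least $1/8$. If both of $\frak b',\frak c'$ lie in $\{\pm[1,-2]\}$, then $\frak c$ has the form $[a,-2a,b,-2b]$; if one lies in $\{\pm[1,-2]\}$ and the other in $\{\pm[1,-3]\}$, then $\frak c$ has the form $[a,-2a,b,-3b]$; both are excluded by hypothesis. Among the remaining subcases: both in $\{\pm[1,-3]\}$ gives $N(\frak c)\ge (6/7)(2/9)=4/21$; one in $\{\pm[1,-3]\}$ and the other with norm $\ge 1/8$ gives $N(\frak c)\ge (6/7)(17/72)=17/84$; both of norm $\ge 1/8$ gives $N(\frak c)\ge (6/7)(1/4)=3/14$; and each of these exceeds $3/16$.

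The main obstacle is the final subcase, where WLOG $\frak b'\in \{\pm[1,-2]\}$ and $\frak c'\notin\{\pm[1,-2],\pm[1,-3]\}$ with $N(\frak c')\ge 1/8$: here the direct bound $(6/7)(5/24)=5/28$ falls below $3/16$, so I must exploit the $(1/k)N(\widetilde{\frak b'}+\widetilde{\frak c'})$ term. Since the concatenation has even length in $\{0,2,4\}$, length at least $2$ gives $N\ge G(2)=1/12$ and we again obtain $N(\frak c)\ge 4/21>3/16$. The hard part is ruling out full cancellation (length $0$): taking WLOG $k\mid B$ and writing $m=B/k$ and $\frak c'=[x,y]$, I would enumerate the three partitions of $\{m,-2m,Cx,Cy\}$ into sign-opposite pairs. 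The partition $\{\{m,-2m\},\{Cx,Cy\}\}$ fails since $m\ne 0$, and the other two each force $\{Cx,Cy\}=\{-m,2m\}$; from $(B,C)=1$ and $k\mid B$ one has $(m,C)=1$, so integrality of $x,y$ forces $|C|=1$ and primitivity of $\frak c'$ then forces $|m|=1$, giving $\frak c'\in\{\pm[1,-2]\}$ and contradicting the subcase hypothesis. Therefore $N(\widetilde{\frak b'}+\widetilde{\frak c'})\ge 1/12$ in this subcase as well, and $N(\frak c)\ge 4/21>3/16$.
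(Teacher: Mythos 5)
Your proof is correct and follows essentially the same route as the paper: apply Proposition \ref{prop2.2} with $k\ge 7$, split on the lengths $\{1,3\}$ versus $\{2,2\}$, and in the latter case use the classification of small-norm length-$2$ lists together with the two excluded forms. The only difference is organizational — the paper splits the $\{2,2\}$ case into ``same'' versus ``different'' sublists while you split by norm thresholds — and your explicit argument ruling out full cancellation of $\widetilde{\frak b'}+\widetilde{\frak c'}$ in the borderline subcase is in fact more careful than the paper's bare assertion of the $+\tfrac{1}{84}$ term there.
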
 
\begin{proof}  If $\frak c$ splits into lists of length $1$ and $3$ then $N(\frak c) \ge \tfrac{6}{7} (\tfrac{1}{12} + \frac{1}{8}) + \frac{1}{84}>\frac 3{16}$.  

Suppose now that $\frak c$ splits into two lists of length $2$.  If these two sublists are the same, then they must each have norm at least $1/9$ (since $\frak c$ is 
not of the form $[a,-2a,b,-2b])$ and then $N(\frak c) \ge \frac 67 ( \frac 19 + \frac 19) > \frac{3}{16}$.  If these two sublists are different then their norms must 
add up to at least $\frac{1}{8}+\frac{1}{12}$ (since $\frak c$ is not of the form $[a,-2a, b,-3b]$) and one has $N(\frak c) \ge \frac 67 (\frac 18+\frac{1}{12}) + \frac{1}{84} > \frac{3}{16}$.  
\end{proof}

\section{Lists of length $5$ and $6$} 


\noindent We prepare for the work in the next section (classifying factorial ratios of length $7$) by identifying the lists of length 
$5$ and $6$ with small norm. 

\subsection{Length $5$}  


\begin{lemma} \label{lem7.1}  Suppose $\frak a$ is a Type A primitive list of length $5$; that is, $\frak a$ is of the form $[a, -2a, b, -2b, c]$.   Then $N(\frak a) > 31/168$ 
except for the following lists: 
$$ 
\text{Norm } \tfrac 16: \ \  [1, -2, -3, 4, 6],  \ [1, -2, -3, 6, -12], \ [ 2, 3, -4, -6, 12], \ [1, -2, 4, 6, -12]. 
$$
$$ 
\text{Norm } \tfrac{19}{108}: \  \ [1,-2,-3,6,9], \  [1,-2,-3,6,-18], \   [1,-3,6,9,-18], \ [2,3,-6,-9,18].
$$ 
$$ 
\text{Norm } \tfrac{17}{96}: \ \ [1,-2,4,-8,16]. 
$$ 
$$
\text{Norm } \tfrac{13}{72}: \ \ [1,-2,-3,6,8], \ [1,-2,-3,6,-24], \ [1,-4,8,12,-24], \  [3,4,-8,-12,24]. 
$$
$$ 
\text{Norm } \tfrac{11}{60}: \ \  [1,-2,4,10,-20], \  [1,-2,-5,10,-20], \ [1,-2,4,-5,10], \  [1,-2,-3,6,15], 
$$ 
$$ 
\hskip .6 in [1,-2,-3,6,10], \  [1,-2,-3,-5,6], \  [1, -2,-3,6,-30], \  [1, -5, 10,15,-30],
$$
$$
\hskip .6 in [2,-4,5,-10,20], \  [2,5,-10,-15,30], \  [3,5,-10,-15,30], \ [5,-6,-10,-15,30].
$$ 
\end{lemma}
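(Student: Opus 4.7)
The plan is to mirror the approach of Lemma \ref{lem4.2}, combining a computer enumeration of Type A lists with bounded separation with an analytic lower bound for highly separated lists, via the splitting formula of Proposition \ref{prop2.2}. Write $\frak a = [a,-2a,b,-2b,c]$ with $\gcd(a,b,c)=1$ and no degeneracies.

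First, I would fix a threshold $k_0$ (in practice $k_0=6$, since the tabulated exceptions all have separation at most $5$) and enumerate all primitive Type A lists of length $5$ that are at most $k_0$-separated. By the Type A refinement of Proposition \ref{prop2.4} described after Definition \ref{def2.5}, the pairing $a\leftrightarrow -2a$ packs consecutive $2$-adic valuations, and for each prime $p\le k_0$ the $p$-adic valuations of the entries are confined to a bounded range; this forces $a$, $b$, $c$ to lie among the divisors of a specific integer $M(k_0)$. A direct computer search over $(a,b,c)$ in these divisors, computing $N(\frak a)$ via \eqref{2.1}, should extract exactly the lists tabulated in the statement.

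Next, I would show that any Type A list of length $5$ that is at least $7$-separated satisfies $N(\frak a)>31/168$. Write $\frak a = B\frak b + C\frak c$ with separation parameter $k\ge 7$. Within any Type A pair $(a,-2a)$ the $p$-adic valuations agree for odd $p$ and differ by exactly one for $p=2$, so any separation by $k\ge 4$ keeps each pair intact in a single sublist; only the splits $(\ell(\frak b),\ell(\frak c))=(1,4)$ or $(2,3)$ survive. Proposition \ref{prop2.2} gives
$$
N(\frak a) = \Bigl(1-\tfrac{1}{k}\Bigr)\bigl(N(\frak b)+N(\frak c)\bigr) + \tfrac{1}{k}\,N(\widetilde{\frak b}+\widetilde{\frak c}).
$$
In the $(1,4)$ split, $N(\frak b)=1/12$, and $\frak c$ of length $4$ is either Type A with $N(\frak c)\ge 1/9$ by \eqref{4.3}, or it lies in the finite list of Lemma \ref{lem4.2} exceptions and is paired with $[1]$ and checked directly. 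Using $N(\widetilde{\frak b}+\widetilde{\frak c})\ge G(3)=1/8$ (since $\ell(\widetilde{\frak b}+\widetilde{\frak c})\ge 3$ with odd parity) gives $N(\frak a)>31/168$ for $k\ge 8$. In the $(2,3)$ split, \eqref{4.1}, Lemma \ref{lem4.1}, and the length-$3$ formulas of Section 4.3 give the same conclusion already for $k\ge 7$.

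The main obstacle is the borderline $k=7$ case of the $(1,4)$ split, where the generic estimate evaluates to exactly $31/168$. Here one must observe that the extremizer $N(\frak c)=1/9$ forces $\frak c=[1,-2,-3,6]$, compute the actual norm of the resulting length-$3$ list $\widetilde{\frak b}+\widetilde{\frak c}$ for each choice of the singleton element it cancels against, and verify the minimum is $5/36>1/8$, which strictly beats $31/168$. Calibrating $k_0$ is routine: smaller $k_0$ weakens the analytic lower bound, larger $k_0$ inflates the enumeration, and $k_0=6$ sits comfortably between the two.
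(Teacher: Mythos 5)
Your proposal is correct and follows essentially the same route as the paper: a computer enumeration of Type A lists of bounded separation (the paper enumerates lists that are at most $7$--separated, whose elements divide $2^6\times 3^2\times 5^2\times 7^2$) combined with the splitting bound of Proposition \ref{prop2.2} for the remaining, highly separated lists. The only difference is your threshold $k_0=6$ rather than $7$, which forces you to resolve the borderline $(1,4)$-split at $k=7$ where the crude estimate equals exactly $31/168$; the paper sidesteps this by taking $k\ge 8$ in the analytic step (getting the clean bounds $107/576$ and $37/192$), but your patch --- noting that equality would force $N(\frak c)=1/9$, hence $\frak c=[1,-2,-3,6]$, and then $N(\widetilde{\frak b}+\widetilde{\frak c})\ge 5/36>1/8$ --- is valid.
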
 
\begin{proof}    Suppose first that $\frak a$ is at most $7$ separated.   Then, arguing as in Proposition \ref{prop2.4},
 the elements of $\frak a$ must be divisors of $2^6 \times 3^2 \times 5^2 \times 7^2$, and a computer calculation produced 
 the catalogue of lists with norm at most $31/168$ given in the lemma.

Now suppose that $\frak a$ is $k$-separated with $k \ge 8$.  If it splits into lists $\frak b$ of length $1$ and $\frak c$ of length 
$4$ then (since $\ell(\widetilde{\frak b}+\widetilde{\frak c})$ is odd and at least $3$, and so $N(\widetilde{\frak b} +\widetilde{\frak c})\ge \frac 18$) 
$$ 
N(\frak a) \ge \frac{7}{8} \Big( \frac {1}{12} +\frac{1}{9}\Big) + \frac 18 \frac{1}{8} = \frac{107}{576}. 
$$ 
If $\frak a$ splits into lists of length $2$ and $3$ then 
$$ 
N(\frak a) \ge \frac{7}{8} \Big( \frac{1}{12} + \frac {1}{8} \Big) + \frac{1}{8} \frac{1}{12} =\frac{37}{192}. 
$$ 
Both these values are $>31/168$, completing our proof.  \end{proof} 

\begin{lemma} \label{lem7.2}  If $\frak a$ is a primitive Type B list of length $5$ (thus not of the form $[a,-2a,b,-2b,c]$) then 
$N(\frak a) \ge  5/24$. 
\end{lemma}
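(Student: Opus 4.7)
The plan is to mirror the proof of Lemma \ref{lem7.1}, dividing the analysis according to the level of separation of $\frak a$. For $\frak a$ that is $k$-separated with $k$ above a suitable threshold $k_0$, I would decompose $\frak a = B\frak b + C\frak c$ via Proposition \ref{prop2.2} and bound $N(\frak a)$ using norms of shorter lists. For $\frak a$ of small separation (at most $k_0$-separated), Proposition \ref{prop2.4} bounds the elements of $\frak a$ by divisors of a fixed integer, reducing to a finite computer enumeration of primitive Type B lists of length $5$.

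In the large-separation regime, the length split is $(\ell(\frak b), \ell(\frak c)) = (1,4)$ or $(2,3)$, and by the parity clause of Proposition \ref{prop2.2} the list $\widetilde{\frak b} + \widetilde{\frak c}$ has odd length (same parity as $\ell(\frak a) = 5$). For the $(1,4)$ split, $\frak b = [1]$, and the Type B hypothesis on $\frak a$ forces $\frak c$ not to be of the form $[a, -2a, b, -2b]$ (otherwise $\frak a = [B, Ca, -2Ca, Cb, -2Cb]$ would be Type A). Lemma \ref{lem4.2} then gives $N(\frak c) \ge 1/6$, and $N(\widetilde{\frak b} + \widetilde{\frak c}) \ge G(3) = 1/8$ (since the length is odd and at least $3$). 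Proposition \ref{prop2.2} yields $N(\frak a) \ge (1 - 1/k)(1/4) + 1/(8k)$, which is $\ge 5/24$ for all $k \ge 3$.

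For the $(2,3)$ split, $N(\widetilde{\frak b} + \widetilde{\frak c}) \ge 1/12$ (odd length at least $1$), and I would further sub-divide. If $\frak b = [1, -2]$, the Type B hypothesis excludes $\frak c$ from being of the form $[a, -2a, b]$ (otherwise $\frak a = [B, -2B, Ca, -2Ca, Cb]$ would be Type A), so the classification of length-$3$ lists in Section 4.3 gives $N(\frak c) \ge 17/108$. If $\frak b \ne [1, -2]$, then \eqref{4.1} gives $N(\frak b) \ge 1/9$, and $N(\frak c) \ge G(3) = 1/8$. A short arithmetic check of Proposition \ref{prop2.2} shows that $N(\frak a) \ge 5/24$ as soon as $k \ge 6$ in each of these sub-cases. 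Hence taking $k_0 = 5$ suffices to settle every $k \ge k_0 + 1$ sub-case of the large-separation regime.

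It remains to treat primitive Type B lists of length $5$ that are at most $5$-separated, in which case Proposition \ref{prop2.4} forces the elements to be divisors of $2^8 \cdot 3^4 \cdot 5^4$, and a computer enumeration over such lists verifies the bound. The main obstacle I anticipate is the size of this enumeration; should direct search prove unwieldy, I would raise $k_0$ to $7$ (the large-separation bounds remain comfortably above $5/24$ under this change) at the cost of allowing primes up to $7$ in Proposition \ref{prop2.4}, and then exploit the Type B structural constraints to prune the search space, exactly in the spirit of the small-separation step of Lemma \ref{lem7.1}.
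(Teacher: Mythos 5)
Your proposal is correct and follows essentially the same route as the paper: split according to the separation of $\frak a$, reduce the at most $k_0$--separated Type B lists to a finite computer check via Proposition \ref{prop2.4}, and handle the highly separated lists through Proposition \ref{prop2.2} together with Lemma \ref{lem4.2} and the length $2$ and $3$ classifications, using in each sub-case that one of $\frak b$, $\frak c$ inherits Type B. The only substantive difference is that the paper invokes the sharper bound $N(\widetilde{\frak b}+\widetilde{\frak c})\ge 1/8$ (odd length at least $3$) in the tight $(2,3)$ sub-cases, which lets it take $k_0=4$ and search only over divisors of $2^8\cdot 3^4$, whereas your weaker $1/12$ bound forces $k_0=5$ and a noticeably larger search over divisors of $2^8\cdot 3^4\cdot 5^4$; note also that your contingency of raising $k_0$ to $7$ would enlarge, not shrink, that finite computation, so the right fix if the search is unwieldy is to sharpen the separated-case inequalities and lower $k_0$, as the paper does.
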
 
\begin{proof}   First we computed primitive Type B lists of length $5$ that are at most 
$4$ separated and found that the smallest attained norm is $5/24$, and there are $8$ lists attaining that norm.  
Now suppose $\frak a$ is primitive of Type B, and $k$--separated with $k\ge 5$.   Here we will check that 
the norm is $>5/24$.    If $\frak b$ and $\frak c$ have their usual meanings, then note that (since $\ell(\frak a)= 5$ is odd, and 
$\frak a$ is of Type B) one of $\frak b$ or $\frak c$ must be of Type B.   As in earlier situations, our argument splits into cases 
depending on the lengths of $\frak b$ and $\frak c$.  

 If $\ell(\frak b)=1$ and $\ell(\frak c)= 4$, then $\frak b=[1]$ and $\frak c$ must be of Type B.  By Lemma \ref{lem4.2} we know that  $N(\frak c) \ge 1/6$, and so by 
 Proposition \ref{prop2.2} 
 $$ 
N(\frak a) \ge \frac{4}{5} \Big( \frac{1}{12} + \frac{1}{6} \Big) + \frac{1}{5} \frac{1}{8} > \frac{5}{24}. 
$$ 

We may now suppose that $\ell(\frak b)= 2$ and $\ell(\frak c)=3$ and that (at least) one of $\frak b$ or $\frak c$ is of Type B.   
If $N(\frak b) + N(\frak c) 
\ge 1/4$ then 
$$ 
N(\frak a) \ge \frac{4}{5} \frac{1}{4} + \frac{1}{5} \frac{1}{12} = \frac{13}{60} > \frac{5}{24}. 
$$ 
Thus we may suppose that $N(\frak b) + N(\frak c) < 1/4$, which means that we must be in one of the following two cases: 
either $\frak b=[1,-2]$ and $\frak c$ is of Type B (with norm $\ge 17/108$ from the work in Section 4.3), or $\frak b=[1,-3]$ and $\frak c=[1,-2,4]$.   
In both cases $\widetilde{\frak b} +\widetilde{\frak c}$ has length odd and at least $3$, so that its norm is $\ge 1/8$.  
Therefore in the first case we have 
$$ 
N(\frak a) \ge \frac{4}{5} \Big( \frac{1}{12} + \frac{17}{108}\Big) + \frac{1}{5}\frac{1}{8}  > \frac{5}{24},  
$$ 
and in the second case we have 
$$ 
N(\frak a) \ge \frac{4}{5} \Big( \frac{1}{9} + \frac{1}{8} \Big) + \frac{1}{5} \frac{1}{8}  >\frac{5}{24}.  
$$ 
\end{proof} 

\subsection{Length $6$}   

\begin{lemma} \label{lem7.3}  The primitive lists of length $6$ of the form $[a,-2a, b,-2b, c,-2c]$ with norm $< 11/60$ are given as follows: 
$$ 
\text{Norm } \tfrac{17}{108}: \ \ [1,-2,-3,6,9,-18];
$$
$$
\text{Norm } \tfrac{31}{180}: \ [1,-2,-3,-5,6,10], \ [3,5,-6,-10,-15,30];
$$ 
$$ 
\text{Norm } \tfrac{13}{72}:  \  \ [1,-2,4,-8,-12,24], \ [3,-4,-6,8,12,-24],
$$ 
$$ 
\hskip 1 in \ [1,-2,-3,4,6,-8], \ [1,-2,-3,6,-12,24]; 
$$ 
$$ 
\text{Norm } \tfrac{5}{28}: \ \ [1,-2,-3,6,-7,14], \ [1,-2,-7,14,21,-42]; 
$$ 
$$
\text{Norm } \tfrac{59}{324}: \ \ [1,-2,-9,18,27,-54]. 
$$
\end{lemma}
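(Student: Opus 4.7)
The proof plan is to mimic the two-stage template used in Lemmas~\ref{lem4.2}, \ref{lem7.1} and \ref{lem7.2}: first exhaust lists of bounded separation by a direct computer enumeration, then rule out lists of large separation by applying Proposition~\ref{prop2.2}. A short calculation, performed below, pins the threshold at $k_0 = 10$: we handle by computer all lists that are at most $10$--separated, and treat lists that are at least $11$--separated analytically.

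For the enumeration step, we apply the Type~A refinement of Proposition~\ref{prop2.4} (illustrated in the paragraph preceding Section~3) to primitive lists of the form $\frak a = [a,-2a,b,-2b,c,-2c]$ with $\gcd(a,b,c)=1$. Since for each odd prime $p$ the six entries of $\frak a$ have only the three distinct $p$--adic valuations of $a$, $b$, $c$, and for $p=2$ at most six, the divisor support of $\frak a$ under the at-most-$10$--separated hypothesis is substantially smaller than the generic bound from Proposition~\ref{prop2.4}: the three generators $a$, $b$, $c$ each divide a specific product of small prime powers with exponents forced by the pigeonhole argument of Lemma~\ref{lem2.3}. A direct search over admissible triples $(a,b,c)$, evaluating $N(\frak a)$ via formula~\eqref{2.1}, returns precisely the lists recorded in the statement.

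For the analytic step, suppose $\frak a$ is $k$--separated with $k\ge 11$ and write $\frak a = B\frak b + C\frak c$ as in Definition~\ref{def2.1}. The partition $(\ell(\frak b),\ell(\frak c))$ must be one of $(1,5)$, $(2,4)$, or $(3,3)$. Using the bounds $G(1)=G(2)=1/12$, $G(3)=1/8$, $G(4)=1/9$ and $G(5)\ge 1/6$ from Corollary~\ref{cor3.5}, together with the length--parity constraints on $\widetilde{\frak b}+\widetilde{\frak c}$ supplied by Proposition~\ref{prop2.2}, we obtain a lower bound
\[
N(\frak a) \;\ge\; \Bigl(1-\tfrac{1}{k}\Bigr)\bigl(G(\ell(\frak b))+G(\ell(\frak c))\bigr) + \tfrac{1}{k}\,G^{*},
\]
where $G^{*}$ is the minimum of $G(\ell)$ over $\ell\ge |\ell(\frak b)-\ell(\frak c)|$ of even parity (or $0$ if $\widetilde{\frak b}+\widetilde{\frak c}$ may be empty). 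The tightest case is the $(2,4)$ split, where this becomes $\bigl(1-\tfrac{1}{k}\bigr)\cdot \tfrac{7}{36} + \tfrac{1}{k}\cdot \tfrac{1}{12}$; at $k=10$ this equals $11/60$ exactly, while for $k\ge 11$ it is strictly greater, completing the argument. The $(1,5)$ and $(3,3)$ splits admit considerable slack already at $k=11$.

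The main obstacle is the very narrow margin in the $(2,4)$ split: the gap $G(2)+G(4)-11/60 \;=\; 7/36 - 11/60 \;=\; 1/90$ is only just enough to push the separation threshold to $k_0 = 10$, rather than allowing the smaller $k_0=7$ that sufficed in Lemma~\ref{lem7.1}. This forces the computer enumeration to cover a noticeably larger search space; fortunately the Type~A constraint reduces the free parameters from six to three, so the search remains well within reach.
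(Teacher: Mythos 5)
Your argument is essentially correct, but it takes a genuinely different (and computationally heavier) route than the paper. The paper proves Lemma \ref{lem7.3} with no new enumeration at all: it applies the norm-preserving involution $\frak a \mapsto \overline{\frak a}$ of Section 4.1. For a list $[a,-2a,b,-2b,c,-2c]$, each pair $x,-2x$ with $x$ odd collapses under the involution to the single entry $-x$ (since $x \mapsto 2x,-x$ and the $2x$ cancels against $-2x$), while pairs with $x$ even are untouched. Since primitivity forces at least one of $a,b,c$ to be odd, the list reduces to an already-classified list of length $5$ (Type A, Lemma \ref{lem7.1}), length $4$ (Section 4.4), or length $3$ (Section 4.3) of the same norm, and the answer is simply read off. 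Your separation dichotomy is sound: the splits $(1,5)$, $(2,4)$, $(3,3)$ are exhaustive, the $(2,4)$ bound $(1-\tfrac1k)\cdot\tfrac{7}{36}+\tfrac1k\cdot\tfrac1{12}$ does equal $\tfrac{11}{60}$ at $k=10$ and exceeds it for $k\ge 11$ (and is increasing in $k$, so taking the minimal separation value is legitimate), and the other two splits have ample slack. The Type A refinement of Proposition \ref{prop2.4} does make the at-most-$10$-separated search feasible (the entries are forced to divide $2^{9}\cdot 3^{4}\cdot 5^{2}\cdot 7^{2}$, which you should state explicitly rather than gesture at). What your approach costs is a fresh, fairly large computer search whose correctness must be trusted independently; what the paper's approach buys is that the length-$6$ Type A case is a corollary of work already done, with the computational burden confined to the shorter lists. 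Your version is self-contained modulo the general separation machinery and would serve as an independent check on the paper's table.
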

\begin{proof}    Suppose exactly one of $a$, $b$, $c$ is odd (say $a$).  Then, by the involution of Section 4.1, this list has the same norm as 
$[-a,b,-2b,c,-2c]$, and we can use our work from Section 7.1 in tabulating these six term lists with small norm.   This contributes the first two lists with norm $13/72$ given above.  
 
Now suppose $a$ and $b$ are odd, but $c$ is even.  Applying the involution, this has the same norm as  $[-a,-b,c,-2c]$, and from our work in Section 4.4, we 
obtain the second two lists  with norm $13/72$. 

Finally if $a$, $b$, and $c$ are all odd, then the involution gives $[-a,-b,-c]$, with all entries odd.  From our work in Section 4.3 we 
can find all such lists with norm below $11/60$, obtaining all the remaining lists given in the lemma. 
 \end{proof}

\begin{lemma} \label{lem7.4}   If $\frak a$ is a primitive list of length $6$ not of the form $[a,-2a,b,-2b,c,-2c]$ then $N(\frak a) >7/36$ except 
for 
$$ 
\text{Norm } \tfrac 16: \ \ [1,-2, -3,4,6,-12]; 
$$ 
$$
\text{Norm } \tfrac{7}{36}: \ \ [1,-2,-3,6,8,-24], \ [1,-3,-4,8,12,-24].  
$$ 
\end{lemma}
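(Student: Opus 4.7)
The plan mirrors the structure of Lemmas 7.1 and 7.2: an exhaustive computer search handles lists of bounded separation, and Proposition 2.2 together with the already-established bounds for shorter lists handles the high-separation regime.

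First I would fix a threshold $k_{0}$ (taking $k_{0}=7$ is convenient) and enumerate all primitive Type B lists of length $6$ that are at most $k_{0}$-separated. By Proposition 2.4, the elements of any such list divide $\prod_{p\le k_{0}} p^{r(p)\cdot 5}$, where $p^{r(p)}$ is the largest power of $p$ not exceeding $k_{0}$; the enumeration is therefore finite and can be carried out in Python with exact fraction arithmetic, as done throughout the paper. The output should be exactly the three exceptional lists claimed in the lemma, together with a certificate that every other at-most-$k_{0}$-separated primitive Type B list of length $6$ has norm strictly above $7/36$.

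For the analytic step, let $\frak a$ be $k$-separated with $k > k_{0}$, and write $\frak a = B\frak b + C\frak c$ as in Definition 2.1. I would split on $(\ell(\frak b),\ell(\frak c)) \in \{(1,5), (2,4), (3,3)\}$. In $(1,5)$: $\frak b = [1]$, and Lemmas 7.1 and 7.2 give $N(\frak c)\ge 1/6$ regardless of whether $\frak c$ is Type A or Type B; since $\widetilde{\frak b}+\widetilde{\frak c}$ is non-degenerate of even length at least $4$, its norm is at least $G(4)=1/9$, and Proposition 2.2 yields
\[
N(\frak a) \ge (1-1/k)(1/12+1/6) + (1/k)(1/9) > 7/36 \quad \text{for all } k\ge 3.
\]
In $(3,3)$: both $N(\frak b), N(\frak c) \ge G(3)=1/8$, so $N(\frak b)+N(\frak c)\ge 1/4$, and for $k\ge 5$ one has $N(\frak a)\ge (1-1/k)(1/4) > 7/36$. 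In $(2,4)$: if $\frak b = [1,-2]$ then $\frak c$ must be Type B (else $\frak a$ would be Type A), so Lemma 4.2 gives $N(\frak c)\ge 1/6$ and $N(\frak b)+N(\frak c)\ge 1/4$, matching the bound of $(3,3)$; otherwise $\frak b$ is Type B of length $2$ so $N(\frak b) \ge 1/9$ by $(4.1)$, and with $N(\frak c)\ge G(4)=1/9$ we obtain $N(\frak b)+N(\frak c)\ge 2/9$.

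The hard part will be the borderline subcase of $(2,4)$ with $(\frak b,\frak c) = ([1,-3], [1,-2,-3,6])$, where $N(\frak b)+N(\frak c)$ is exactly $2/9$ and the first-term contribution of Proposition 2.2 only barely misses $7/36$. There I would exploit the $(1/k)\,N(\widetilde{\frak b}+\widetilde{\frak c})$ term: the gcd condition in Definition 2.1 prevents $k$ from being divisible by $2$ on the $[1,-3]$ side (as $\frak c$ contains even elements while $\frak b$ has only odd ones), which precludes maximal cancellation in $\widetilde{\frak b}+\widetilde{\frak c}$ and forces its length (after removing degeneracies) and its norm to be bounded below via the values of $G(2)$, $G(4)$, $G(6)$ from Corollary 3.5 by enough to push $N(\frak a)$ strictly past $7/36$. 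Once this last case is handled, the combination of computer search and analytic bound yields the stated classification.
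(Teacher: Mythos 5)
Your overall strategy (a finite search for bounded separation plus Proposition \ref{prop2.2} for large separation) is the same as the paper's, and your analytic treatment of the $k\ge 8$ cases is essentially sound. But there are two concrete problems. First, the computational step as you specify it would fail in practice: Proposition \ref{prop2.4} with $k_{0}=7$ and $n=6$ only constrains the elements to divide $2^{10}\cdot 3^{5}\cdot 5^{5}\cdot 7^{5}$, which has $2376$ divisors, so ``enumerate all primitive Type B lists of length $6$ that are at most $7$--separated'' means sifting through on the order of $\binom{4752}{6}\approx 10^{19}$ candidate lists --- far beyond the scale of any computation in the paper. The paper avoids this by pushing the analytic argument down to separation $4$: in the $(2,4)$ case the inequality $N(\frak b)+N(\frak c)\le 25/108$ forces $\frak b=[1,-3]$ and $\frak c=[1,-2,-3,6]$, and in the $(3,3)$ case $N(\frak b)+N(\frak c)\le 7/27$ forces $\frak b=\frak c=[1,-2,4]$, so all lists that are at least $4$--separated reduce to the one-parameter families $[a,-2a,-3a,6a,b,-3b]$ and $[a,-2a,4a,b,-2b,4b]$, which are checked by a small direct calculation (and this is where the exceptional lists appear); the only remaining computer search is over exactly $3$--separated lists, whose elements divide $2^{5}\cdot 3^{4}$. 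You would need a comparable reduction to make your search feasible.

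Second, your handling of the borderline subcase $(\frak b,\frak c)=([1,-3],[1,-2,-3,6])$ rests on a false claim: the gcd condition does not prevent $k$ from being even on the $[1,-3]$ side. Take $B=16$ and $k=8$; then $(16,-2)=2=(2,-2)$, $(16,6)=2=(2,6)$, and so on, so Definition \ref{def2.1} is satisfied with $\widetilde{\frak b}=[2,-6]$. Fortunately no parity argument is needed: Proposition \ref{prop2.2} already guarantees $\ell(\widetilde{\frak b}+\widetilde{\frak c})\ge |\ell(\frak b)-\ell(\frak c)|=2$, so $N(\widetilde{\frak b}+\widetilde{\frak c})\ge G(2)=1/12$ and
$N(\frak a)\ge (1-\tfrac 1k)\tfrac 29+\tfrac 1k\tfrac 1{12}=\tfrac 29-\tfrac{5}{36k}>\tfrac 7{36}$
for every $k\ge 6$. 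With that correction your analytic step is fine; the size of the computational step is the genuine gap.
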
 
\begin{proof}  If the elements of $\frak a$ are not divisible by any prime apart from $2$, then by Lemma \ref{lem3.3} we have $N(\frak a)> \frac{7}{36}$.  
Thus we may assume that $\frak a$ is at least $3$--separated.     If $\frak a$ is split into lists of length $1$ and $5$ then 
$$ 
N(\frak a) \ge \tfrac 23 ( \tfrac{1}{12} + \tfrac{1}{6} ) + \tfrac{1}{3} \tfrac{1}{9} > \tfrac{7}{36}. 
$$ 

Now suppose that $\frak a$ is at least $4$--separated, and splits into $\frak b$ with length $2$ and $\frak c$ with length $4$.  
Then $N(\frak a) >7/36$ unless $N(\frak b) + N(\frak c) \le 25/108$.  
Now note that one of $\frak b$ or $\frak c$ must be of Type B.  Using our knowledge of lists with length $2$ and $4$ (see Sections 4.2 and 4.4), this 
forces $\frak b = [1,-3]$ and $\frak c= [1,-2,-3,6]$.  Thus $\frak a$ is of the form $[a,-2a,-3a,6a,b,-3b]$, and a small calculation shows that 
the two lists with norm $7/36$ given in the lemma are the only possibilities.  

Now suppose that $\frak a$ is at least $4$--separated, and splits into two lists of length $3$.   If $N(\frak a) \le 7/36$ we must then have $N(\frak b) + N(\frak c) \le 7/36 \times 4/3= 7/27$, 
and this forces $\frak b = \frak c= [1,-2,4]$ (by our work in Section 4.3).   Therefore $\frak a$ is of the form $[a,-2a,4a,b,-2b,4b]$, and a small 
calculation shows that the only possibility is the list of norm $1/6$ given in the lemma.  

It remains lastly to consider the case when $\frak a$ is exactly $3$--separated, and splits either into lists of length $2$ and $4$, or into two lists of 
length $3$.  Such lists have at at least two elements that are powers of $2$ (up to sign), and at least one element that is $\pm 3$ times a power of $2$, 
and with all elements being divisors of $2^5 \times 3^4$.   Direct computer calculation of the norms of such lists now verifies the lemma.   
\end{proof} 


\section{Classifying integral factorial ratios of length $7$} 


\subsection{Finding the lists}   First we find the twenty one examples of integral factorial ratios of length $7$. 
 Eighteen of these twenty one examples are Type A lists of the form $[a,-2a, b, -2b, c, -2c, d=a+b+c]$.  
We first found, by a computer calculation, all primitive Type A lists that are at most $7$--separated; by a variant of Proposition \ref{prop2.4} 
all elements of such lists  are divisors of $2^6\times 3^2\times 5^2 \times 7^2$.   A straightforward computer program allows us to enumerate all such solutions.  These are: 
%
$$ 
[4, 6, 14, -2, -3, -7, -12] ; \qquad [1, 6, 10, -2, -3, -5, -7]; \qquad [1,6, 20, -2, -3, -10, -12]
$$ 
$$
[4,5,30, -2, -10, -12, -15]; \qquad [3, 4, 18, -2, -6, -8, -9]; \qquad [1, 4, 24, -2, -7, -8, -12]
$$
$$
[2, 7, 20, -1, -4, -10, -14]; \qquad [4, 7, 24, -1, -8, -12, -14]; \qquad [3, 5, 30, -6, -7, -10, -15] 
$$ 
$$
[3,5, 18,-1, -6, -9, -10]; \qquad [2, 3, 12, -1, -4, -6, -6]; \qquad [2, 5, 24, -1, - 8, -10, -12] 
$$ 
$$
[2,3,18,-1,-6,-7,-9]; \qquad [2, 3, 20,-1,-6, -8, -10]; \qquad [5, 9, 30, -1, -10, -15, -18]
$$
$$ 
[4, 5, 30, -6, -8, -10, -15]; \qquad [2, 3, 30, -4, -6, -10, -15]; \qquad [6, 9, 20, -3, -4, -10,-18]. 
$$

Similarly, we consider lists of the form $[a,-2a, b, -2b, c, -3c, d=a+b+2c]$ that are at most $5$ separated.  
Checking these lists on a computer, we discover three more solutions: 
$$ 
[3, 5, 30, -1, -10, -12, -15]; \qquad [1, 6,15, -2, -3, -5, -12]; \qquad [3, 4, 24, -2, -8, -9, -12]. 
$$ 

It remains now to show that there are no further solutions, and we split this into two cases: when $\frak a$ is of Type A (and we 
may assume at least $8$--separated), and when $\frak a$ is of Type B.     

\subsection{The case $\frak a$ of Type A}  Since our computer calculation covers all Type A examples that are at most $7$--separated, we 
may assume that $\frak a$ is at least $8$--separated.  If $\frak b$ and $\frak c$ have their usual meanings, note that since $\frak a$ is of Type A 
one must have both $\frak b$ and $\frak c$ being of Type A (else property (iii) in Definition \ref{def2.1} cannot be met).   As before, our argument 
divides into cases depending on the lengths of $\frak b$ and $\frak c$.

Suppose that $\frak b=[1]$ and $\frak c$ (which is of Type A) has length $6$.   
If $N(\frak c) <\frac {11}{60}$ then we  can use our work in Section 7.2 to obtain all such Type A lists, and directly check that 
there are no new solutions.  If $N(\frak c)\ge11/60$ then (noting that $\ell(\widetilde{\frak b}+\widetilde{\frak c}) = 5$ or $7$)
$$ 
N(\frak a) \ge \frac{7}{8} \Big( \frac{1}{12}+ \frac{11}{60} \Big) + \frac{1}{8} \frac{1}{6} > \frac 14,
$$ 
and again we are done.  

Next suppose $\ell(\frak b)=2$ (so that the Type A list $\frak b$ must be $[1,-2]$) and $\ell(\frak c) =5$.  
   In section 7.1 we obtained all the Type A lists of length $5$ with norms at most $31/168$, and checking these 
cases we found no new examples.  If the norm of the length $5$ list is $>31/168$ then 
$$ 
N(\frak a) > \frac 78 \Big( \frac{1}{12} + \frac{31}{168}\Big) + \frac 18 \frac 18 = \frac 14,
$$ 
and so we are done with this case. 

We are left with the last case $\ell(\frak b) = 3$ and $\ell(\frak c)=4$.   Since $N(\frak b) \ge 1/8$, we must have 
  $N(\frak c) \le 25/168$, else 
$$ 
N(\frak a) > \frac 78 \Big( \frac 18 +\frac {25}{168} \Big) + \frac 18 \frac 1{12} =\frac 14. 
$$ 
Similarly, since $N(\frak c) \ge 1/9$ we find that $N(\frak b) \le 41/252$.  Our work in Sections 4.3 and 4.4 gives us all 
the Type A lists of lengths $3$ and $4$ with norms in these ranges, and checking these we verify that no new solutions are 
obtained.  

This completes our treatment of Type A lists.

\subsection{The case $\frak a$ of Type B}  If $\frak a$ is at most $4$ separated then (by a variant of Proposition \ref{prop2.4}, using the additional 
fact $s(\frak a)=0$) the elements of $\frak a$ must be divisors of $2^{10} \times 3^5$.  By a computer calculation 
 we checked   that there are no new integral factorial ratios arising from such Type B lists.  
   So we may assume that $\frak a$ is of Type B and is $k$--separated 
with $k\ge 5$.  Let $\frak b$ and $\frak c$ have their usual meanings, and note that at least one of $\frak b$ or $\frak c$ must be of 
Type B.  

If $\frak b=[1]$ and $\ell(\frak c)=6$, then 
$$ 
\tfrac 14 = N(\frak a) \ge \tfrac 45 \big( \tfrac{1}{12} + N(\frak c)\big) + \tfrac{1}{5} \tfrac 16,
$$ 
which forces $N(\frak c) \le 9/48$. There is only one six term list of Type  B with norm below $9/48$, 
namely $[1,-2,-3,6,8,-24]$ and this does not give a solution. 

If $\ell(\frak b) = 2$ and $\ell(\frak c)=5$ then 
$$ 
\tfrac 14 = N(\frak a) \ge \tfrac 45 \big( \tfrac{1}{12} + N(\frak c) \big) + \tfrac 15 \tfrac 18, 
$$ 
which forces $N(\frak c) \le 19/96$, and our work in Section 7.1 shows that there are no Type B lists 
of length $5$ with such small norm.  So $\frak c$ is of Type A, which forces $\frak b$ to be of Type B whence $N(\frak b)\ge 1/9$.  Repeating our 
argument we obtain that $N(\frak c) \le 49/288$, leaving us with only the four lists of length $5$ and norm $1/6$.  
Further given one of those lists, a similar calculation gives $N(\frak b) \le 11/96$, so that $\frak b$ is forced to be $[1,-3]$.  
A quick check of these four cases produced no new examples.

Lastly consider the case $\ell(\frak b)=3$ and $\ell(\frak c)=4$.  If $\frak c$ is of Type B, then using $N(\frak b) \ge 1/8$ we 
can check that $N(\frak c) \le 1/6$, forcing $\frak c$ to be one of the $9$ lists of norm $1/6$, and in turn forcing $\frak b$ to be $[1,-2,4]$.  
These cases are easily checked.  On the other hand, if  $\frak b$ is of Type B, then using $N(\frak c) \ge 1/9$ we are 
forced to have $N(\frak b) \le 13/72$.   In turn, since $\frak b$ is of Type B, we must have $N(\frak b) \ge 17/108$ which forces $N(\frak c) \le 29/216$.  So once again 
we know all the possible choices for $\frak b$ and $\frak c$, and a simple check reveals that these lead to no new solutions.

\section{Understanding lists of length $7$ and $8$} 


\noindent We pave the way for classifying integral factorial ratios of length $9$ in the next section, by determining $G(7)$ and $G(8)$ here.

\begin{lemma} \label{lem9.1}  We have $G(7) = 5/24$ and $G(7;1)  =17/72$.  
\end{lemma}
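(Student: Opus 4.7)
My plan is to handle the two equalities in parallel. The value $G(7;1) = 17/72$ follows immediately from Proposition \ref{prop3.1}, which gives $G(7;1) = \min_{1 \le i < 7}(G(i) + G(7-i))$. Substituting the known small-length values $G(1) = G(2) = 1/12$, $G(3) = 1/8$, $G(4) = 1/9$, $G(5) = 1/6$ (from Sections 4 and Corollary \ref{cor3.5}), and $G(6) = 17/108$ (attained by $[1,-2,-3,6,9,-18]$ from Lemmas \ref{lem7.3} and \ref{lem7.4}), the minimum is attained at $i = 3$ with value $1/8 + 1/9 = 17/72$.

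For the upper bound $G(7) \le 5/24$, I would exhibit an explicit witness. A natural candidate is ${\frak a} = [1, -2, 4, -8, -3, 6, -12]$, which decomposes as ${\frak a} = {\frak b} + (-3){\frak c}$ with ${\frak b} = [1,-2,4,-8]$ and ${\frak c} = [1,-2,4]$. This shows ${\frak a}$ is $3$-separated (the gcd condition in Definition \ref{def2.1} is immediate since every entry of ${\frak b}$ is coprime to $3$). Then $\widetilde{\frak b} = {\frak b}$ and $\widetilde{\frak c} = -{\frak c}$, so their concatenation $[1,-2,4,-8,-1,2,-4]$ collapses to $[-8]$ after three pairs of degeneracies cancel. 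Applying Proposition \ref{prop2.2} with $N({\frak b}) = 7/48$, $N({\frak c}) = 1/8$, and $N([-8]) = 1/12$ yields $N({\frak a}) = \tfrac{2}{3}(7/48 + 1/8) + \tfrac{1}{36} = 5/24$.

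For the lower bound $G(7) \ge 5/24$, the key observation is that high separation forces high norm. If ${\frak a}$ is a primitive length-$7$ list that is $k$-separated for some $k \ge 9$, then Proposition \ref{prop2.2} (dropping the nonnegative second term) gives $N({\frak a}) \ge (1 - 1/k)(N({\frak b}) + N({\frak c}))$. Since $\ell({\frak b}) + \ell({\frak c}) = 7$, we have $N({\frak b}) + N({\frak c}) \ge G(7;1) = 17/72$ (using the value just established), so $N({\frak a}) \ge \tfrac{8}{9}\cdot\tfrac{17}{72} = \tfrac{17}{81} > \tfrac{5}{24}$. Therefore any length-$7$ list with norm strictly below $5/24$ must be at most $8$-separated, and by Proposition \ref{prop2.4} has all entries dividing $2^{18}\cdot 3^6 \cdot 5^6 \cdot 7^6$, a finite family.

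The main obstacle is controlling the resulting computer enumeration, since the raw divisor bound yields an unmanageably large naive search space. I would reduce it by partitioning cases according to the split $(\ell({\frak b}), \ell({\frak c})) \in \{(1,6), (2,5), (3,4)\}$ at the maximum separation, and in each case using the precise classifications of small-norm sublists developed in Sections $4$ and $7$ together with the Type A versus Type B dichotomy of Definition \ref{def2.5} to restrict ${\frak b}$ and ${\frak c}$ to a small catalogue of candidates; this mirrors the inductive case analyses carried out for the length-$5$ and length-$7$ factorial ratios in Sections $6$ and $8$. Since $(1-1/k)(N({\frak b})+N({\frak c}))$ must be below $5/24$ for each candidate split, only finitely many sublist pairs survive the classification bounds, and these can then be enumerated directly.
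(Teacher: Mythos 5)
Your computation of $G(7;1)=17/72$ via Proposition \ref{prop3.1} is correct and is exactly what the paper does, and your witness $[1,-2,4,-8,-3,6,-12]$ for the upper bound $G(7)\le 5/24$ checks out (the paper uses the different witness $[1,-2,-3,6,9,-18,36]$, but your norm computation via Proposition \ref{prop2.2} is valid). The problem is the lower bound $G(7)\ge 5/24$, where there is a genuine gap: you drop the term $\frac1k N(\widetilde{\frak b}+\widetilde{\frak c})$ from Proposition \ref{prop2.2}, and without it the argument cannot close. Dropping that term forces your separation threshold up to $k\ge 9$ (since you need $(1-1/k)\cdot\frac{17}{72}>\frac{5}{24}$), which by Proposition \ref{prop2.4} leaves lists whose entries divide $2^{18}3^65^67^6$ --- a finite but computationally hopeless search space. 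Your proposed rescue, that ``$(1-1/k)(N({\frak b})+N({\frak c}))<5/24$ leaves only finitely many sublist pairs,'' is false for small $k$: take $k=3$ and the split $(2,5)$ with ${\frak b}=[1,-2]$; the constraint becomes $N({\frak c})<\frac{3}{2}\cdot\frac{5}{24}-\frac{1}{12}=\frac{11}{48}$, and there are infinitely many length-$5$ lists with norm below $11/48$ (e.g.\ $[1,-2,-3,6,c]$ for varying $c$, whose norms accumulate at $7/36$). The finiteness in your argument comes only from Proposition \ref{prop2.4}, not from the sublist norm bounds, so the ``small catalogue of candidates'' does not materialize.

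The paper avoids this by keeping the full identity of Proposition \ref{prop2.2}, including both the term $\frac1k N(\widetilde{\frak b}+\widetilde{\frak c})$ and the constraint that $\ell(\widetilde{\frak b}+\widetilde{\frak c})$ is odd and at least $|\ell({\frak b})-\ell({\frak c})|$. This closes the cases at separation as low as $k=3$: for the split $(2,5)$ one gets $N({\frak a})\ge\frac23(\frac1{12}+\frac16)+\frac13\cdot\frac18=\frac5{24}$ exactly, and for $(1,6)$ one gets $\frac23(\frac1{12}+\frac{17}{108})+\frac13\cdot\frac16>\frac5{24}$; the at most $2$--separated lists (all powers of $2$) are dispatched by Lemma \ref{lem3.3}. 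Even then the split $(3,4)$ is genuinely hard --- the naive bound $\frac23(\frac18+\frac19)+\frac13\cdot\frac1{12}=\frac5{27}<\frac5{24}$ fails --- and the paper resolves it with the Type A/Type B dichotomy plus a small computer check of at-most-$4$--separated Type A lists (entries dividing $2^9\cdot3^3$). Your proposal gestures at this case analysis but, as formulated without the $\frac1k N(\widetilde{\frak b}+\widetilde{\frak c})$ term, the individual cases do not yield bounds reaching $5/24$, so the proof does not go through.
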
 
\begin{proof}  From our knowledge of $G(n)$ for $n\le 6$ and Proposition \ref{prop3.1} we 
may easily evaluate $G(7;1)$.   Now we turn to the evaluation of $G(7)$.  
 
 First note that if the list $\frak a$ is at most $2$--separated then by Lemma \ref{lem3.3}, $N(\frak a) \ge 89/384 > 5/24$.  
 So we may now suppose that $\frak a$ is $k$--separated with $k\ge 3$, and maintain our usual notation.  

If $\ell(\frak b) =1$ and $\ell(\frak c)=6$ then (using that $\widetilde{\frak b} + \widetilde{\frak c}$ has length $5$ or $7$, and so $N(\widetilde{\frak b}+\widetilde{\frak c}) \ge 1/6$)  
$$ 
N(\frak a) \ge \tfrac 23 \big( \tfrac{1}{12} + \tfrac{17}{108}\big) + \tfrac{1}{3}\tfrac16 = \tfrac{35}{162} > \tfrac{5}{24}. 
$$ 

If $\ell(\frak b)=2$ and $\ell(\frak c)=5$ then 
$$ 
N(\frak a) \ge \tfrac 23 \big( \tfrac{1}{12} +\tfrac 16\big) + \tfrac 13\tfrac 18 = \tfrac{5}{24},  
$$ 
and this is attained in the example $[1,-2,-3,6,9,-18,36]$.

The last case $\ell(\frak b)=3$ and $\ell(\frak c)=4$ is   more involved.  Suppose first that $\frak a$ is 
of Type A, so that both $\frak b$ and $\frak c$ are of Type A.  First we checked the cases when $\frak a$ is at most $4$--separated by direct computation 
(the elements of such lists must be divisors of $2^9 \times 3^3$).   Suppose therefore that $\frak a$ is of Type A and 
at least $5$ separated.  If either $\frak b\neq [1,-2,4]$ or $\frak c \neq [1, -2, -3, 6]$ then we have 
$N(\frak b) + N(\frak c) \ge \min (\frac 19 +\frac{5}{36}, \frac 18 + \frac{2}{15}) = \frac 14$ and so 
$$
N(\frak a) \ge \tfrac 45 \tfrac 14 + \tfrac{1}{5} \tfrac{1}{12} > \tfrac 5{24}. 
$$ 
If $\frak b = [1,-2,4]$ and $\frak c = [1,-2,-3,6]$ then $\ell(\widetilde{\frak b} + \widetilde{\frak c}) =3$, $5$, or $7$ and so $\widetilde{\frak b} +\widetilde{\frak c}$ has 
norm at least $1/8$.   Therefore
$$ 
N(\frak a) \ge \tfrac 45 \big( \tfrac 18+\tfrac 19\big) + \tfrac 15 \tfrac 18 > \tfrac{5}{24}. 
$$ 

Now suppose that $\frak a$ is of Type B, so that at least one of $\frak b$ or $\frak c$ must be of Type B.   If both $\frak b$ and $\frak c$ are of Type B then 
$$ 
N(\frak a) \ge \tfrac 23 \big( \tfrac{17}{108} + \tfrac 16 \big) +\tfrac 1{3} \tfrac{1}{12} > \tfrac 5{24} .
$$
If exactly one of $\frak b$ and $\frak c$ is of Type A and the other is of Type B then 
$$ 
N(\frak b) +  N( \frak c) \ge \min \big( \tfrac 18 + \tfrac 16, \tfrac {17}{108} +\tfrac{1}{9} \big) = \tfrac{29}{108}.
$$ 
Further, here we must have $\ell(\widetilde{\frak b} + \widetilde{\frak c})$ being odd and at least $3$ so that 
$N(\widetilde{\frak b} + \widetilde{\frak c} ) \ge 1/8$.  Therefore  
$$ 
N( \frak a ) \ge \tfrac 23 \tfrac{29}{108} + \tfrac 13 \tfrac 18 > \tfrac {5}{24}, 
$$ 
completing our proof.  
 \end{proof}

\begin{lemma} \label{lem9.2}  We have $G(8) = 8/45$ and $G(8;1)=2/9$.  
\end{lemma}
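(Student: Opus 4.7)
The plan is as follows. First, I would establish $G(8;1) = 2/9$ by applying Proposition \ref{prop3.1} and computing $\min_{1 \le i < 8}(G(i) + G(8-i))$ using the values of $G(n)$ for $n \le 7$ from Corollary \ref{cor3.5} and Lemma \ref{lem9.1}; the minimum is attained at $i = 4$, giving $2G(4) = 2/9$. For the upper bound $G(8) \le 8/45$, I would exhibit the Type A list $\frak a = [1, -2, -3, 6, -5, 10, 15, -30]$, which is the image under the involution of Section 4.1 of the length-$4$ list $[1, -3, -5, 15]$; since the involution preserves norms and $N([1,-3,-5,15]) = 8/45$ was established in Section 4.4, we obtain $N(\frak a) = 8/45$.

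For the lower bound $G(8) \ge 8/45$, I would proceed by the inductive framework as in Lemma \ref{lem9.1}. If $\frak a$ is at most $2$-separated, Lemma \ref{lem3.3} gives $N(\frak a) \ge G_1(8) > 8/45$ by direct computation. Otherwise I decompose $\frak a = B\frak b + C\frak c$ with $k \ge 3$ via Definition \ref{def2.1} and apply Proposition \ref{prop2.2}, splitting into cases by $(\ell(\frak b), \ell(\frak c))$. For the asymmetric splits $(1,7)$, $(2,6)$, and $(3,5)$, I lower-bound $N(\frak b) + N(\frak c)$ by the sum of the corresponding $G$ values, and $N(\widetilde{\frak b} + \widetilde{\frak c})$ (which must have even length) by $G$ at its smallest possible length; the worst case $k = 3$ will yield bounds comfortably exceeding $8/45$ in each (for example, $(3,5)$ gives $(2/3)(1/8+1/6) + (1/3)(1/12) = 2/9 > 8/45$).

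The delicate case will be $(4, 4)$. Here I would subdivide by whether $\widetilde{\frak b} + \widetilde{\frak c}$ is empty. If it is, $(B/k)\frak b = -C\frak c$ forces $\frak b = \pm \frak c$ and $|B/k| = |C| = 1$, and condition (iii) of Definition \ref{def2.1} then requires $k$ to be coprime to every element of $\frak c$. Letting $m(k)$ denote the minimum norm of a primitive length-$4$ list with elements coprime to $k$, formula \eqref{4.3} and Section 4.4 give $m(3) = 2/15$ (attained by $[1,-2,-5,10]$), $m(k) = 1/9$ for $k$ coprime to $6$ (attained by $[1,-2,-3,6]$), and $m(k) \ge 8/45$ in all other cases; then $(1-1/k)\cdot 2 m(k) \ge 8/45$, with equality only at $k = 3$ and $k = 5$. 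If $\widetilde{\frak b} + \widetilde{\frak c}$ is non-empty, its norm is at least $G(2) = 1/12$, and the only worrisome subcase is $k = 3$ with $N(\frak b) + N(\frak c) = 2/9$; but that equality would force $\frak b = \frak c = [1,-2,-3,6]$, which violates condition (iii) for $k = 3$, so instead $N(\frak b) + N(\frak c) \ge 1/9 + 2/15 = 11/45$ and $(2/3)(11/45) + (1/3)(1/12) > 8/45$ closes the case.

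The hard part will be the $(4, 4)$ analysis: multiple distinct decompositions simultaneously attain $8/45$ (namely $k = 3$ with $\frak b = \frak c = [1,-2,-5,10]$ and $k = 5$ with $\frak b = \frak c = [1,-2,-3,6]$), so the argument must exploit the uniqueness of $[1,-2,-3,6]$ as the $G(4)$-minimizer together with the prime-structure restriction that condition (iii) imposes on $k$-separated decompositions.
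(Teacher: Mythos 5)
Your evaluation of $G(8;1)$, the upper bound via $[1,-2,-3,6,-5,10,15,-30]$, the asymmetric splits, and the empty-$\widetilde{\frak b}+\widetilde{\frak c}$ subcase of the $(4,4)$ split are all sound; the values $m(3)=2/15$, $m(k)=1/9$ for $(k,6)=1$, and $m(k)\ge 8/45$ for even $k$ do check out against Section 4.4. But there is a genuine gap in the non-empty $(4,4)$ subcase with $k=3$: you dismiss $\frak b=\frak c=[1,-2,-3,6]$ on the grounds that condition (iii) of Definition \ref{def2.1} must fail, and it need not. Condition (iii) compares $(kb,c)$ with $(b,c)$ for $b$ running over $\widetilde{\frak b}=(B/k)\frak b$, not over $\frak b$ itself; if $9\mid B$ then every $b\in\widetilde{\frak b}$ is already divisible by $3$ and the condition holds. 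Concretely, $\frak a=[9,-18,-27,54,1,-2,-3,6]=9\cdot[1,-2,-3,6]+1\cdot[1,-2,-3,6]$ is a legitimate $3$--separated decomposition of type $(4,4)$ with $\frak b=\frak c=[1,-2,-3,6]$, and for it your argument yields only $\tfrac23\cdot\tfrac29+\tfrac13\cdot\tfrac1{12}=\tfrac{19}{108}<\tfrac{8}{45}$. The subcase is salvageable: writing $\widetilde{\frak b}=\mu[1,-2,-3,6]$ and $\widetilde{\frak c}=\nu[1,-2,-3,6]$ with $(\mu,\nu)=1$ and $\mu\neq-\nu$, one checks that at most two cancellations occur, so $\ell(\widetilde{\frak b}+\widetilde{\frak c})\ge 4$, $N(\widetilde{\frak b}+\widetilde{\frak c})\ge\tfrac19$, and $N(\frak a)\ge\tfrac{4}{27}+\tfrac1{27}=\tfrac{5}{27}>\tfrac{8}{45}$ --- but that extra step is required and is missing from your argument.

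The paper's proof avoids this delicacy by a different choice of decomposition: it takes $p$ to be the \emph{largest} prime dividing any element of $\frak a$ and splits $\frak a$ into the multiples and non-multiples of $p$. Then $p=2$ is disposed of by Lemma \ref{lem3.3}, $p\ge 5$ gives $N(\frak a)\ge\tfrac45\cdot\tfrac29=\tfrac8{45}$ at once (no $\widetilde{\frak b}+\widetilde{\frak c}$ term needed), and in the remaining case $p=3$ with a $(4,4)$ split one of the two sublists consists only of powers of $2$ and so has norm at least $7/48$ by Lemma \ref{lem3.3}, which closes the case without any analysis of extremal length-$4$ lists. Your route is workable, and its $(4,4)$ analysis is more informative about where equality occurs, but you should either repair the $k=3$ subcase as indicated or switch to the largest-prime decomposition.
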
 
\begin{proof}  Determining $G(8;1)$ is easy by Proposition \ref{prop3.1}, and we focus on evaluating $G(8)$.   Note that the list 
$[1,-2,-3,6,-5,10,15,-30]$ has norm $8/45$, and we need only show that no smaller norm is possible.   Let 
$\frak a$ be a primitive list of length $8$ and let $p$ denote the largest prime dividing some element of $\frak a$. 
Clearly $\frak a$ is $p$--separated, and divide $\frak a$ into the sublists consisting of the multiples of $p$ and the non-multiples of $p$. 

If $p=2$ then by Lemma \ref{lem3.3} we have $N(\frak a) \ge 199/768 > 1/4$ and there is nothing to prove.  
If $p\ge 5$ then $N(\frak a) \ge \frac 45 \frac 29 =\frac{8}{45}$ because $G(i) + G(8-i) \ge \frac 29$ for all $1\le i<8$. 
Thus it remains only to consider the case $p=3$.  

If $\ell(\frak b) =1$, $2$ or $3$ (and so $\ell( \frak c) =7$, $6$, or $5$)  then 
 $$ 
N(\frak b) + N(\frak c) \ge \min \big( \tfrac{1}{12} + \tfrac{5}{24}, \tfrac{1}{12}+ \tfrac{17}{108}, \tfrac{1}{8} +\tfrac 16\big) =\tfrac{13}{54}, 
$$
so that 
$$
N(\frak a) \ge \tfrac{2}{3} \tfrac{13}{54} + \tfrac{1}{3} \tfrac{1}{12} > \tfrac{8}{45}. 
$$ 

We are left with the case $p=3$ and   $\frak b$ and $\frak c$ both have length $4$.  One of these two lists of 
length $4$ must contain only powers of $2$ and hence has norm $\ge 7/48$ by Lemma \ref{lem3.3}.  If $\frak b =\frak c$ then we find $N(\frak a) \ge \frac 23 (N(\frak b) +N(\frak c)) 
\ge \frac{7}{36}$, which suffices.   If $\frak b \neq \frak c$ then $\widetilde{\frak b} + \widetilde{\frak c}$ is non-empty and we have 
$$ 
N(\frak a) \ge \tfrac 23 \big( \tfrac 19 + \tfrac{7}{48}\big) + \tfrac{1}{3} \tfrac{1}{12} > \tfrac{8}{45}. 
$$
\end{proof}

\section{Classifying integral factorial ratios of length $9$}  


\subsection{Lists that are at least $5$ separated}  We first determine the integral factorial ratios arising from primitive lists $\frak a$ of length $9$ that 
are $k\ge 5$ separated.  Let $\frak b$ and $\frak c$ have their usual meanings.   

If $\ell(\frak b)=1$ and $\ell(\frak c)=8$ then $N(\frak b) \ge 1/12$ and by Lemma \ref{lem9.2} $N(\frak c) \ge 8/45$.  
Further $N(\widetilde{\frak b} +\widetilde{\frak c}) \ge 5/24$ and so we conclude that 
$$
N(\frak a) \ge \tfrac 45\big( \tfrac{1}{12} + \tfrac{8}{45} \big) + \tfrac{1}{5} \tfrac{5}{24} > \tfrac 14. 
$$ 
Similarly if $\ell(\frak b)=2$ and $\ell(\frak c)=7$  then 
$$ 
N(\frak a) \ge \tfrac{4}{5} \big( \tfrac{1}{12} + \tfrac{5}{24}\big) + \tfrac{1}{5} \tfrac{1}{6} > \tfrac{1}{4}. 
$$ 
If $\ell(\frak b)=3$ and $\ell(\frak c)=6$ then 
$$ 
N(\frak a) \ge \tfrac 45 \big( \tfrac{1}{8} + \tfrac{17}{108} \big) + \tfrac{1}{5} \tfrac{1}{8} > \tfrac 14. 
$$ 

Lastly suppose that $\ell(\frak b) = 4$ and $\ell(\frak c)=5$.  Here we must have $N(\frak b) + N(\frak c) \le 7/24$, else 
$N(\frak a)$ would be $>\frac 14$.   Since $N(\frak c) \ge \frac 16$, this inequality implies that $N(\frak b) \le 1/8$ which 
forces $\frak b = [1,-2,-3,6]$ and $N(\frak b) = 1/9$.  In turn, since $\frak b$ is known, we infer that $N(\frak c) \le 13/72$.  
Now from our work in Section 7.1 we know all the primitive lists of length $5$ with norm at most $13/72$.  Checking these 
lists we discover exactly one factorial ratio arising in this manner: namely, 
$$ 
[2,3,5,30,-1,-6,-8,-10,-15]. 
$$ 

\subsection{Lists that are at most $4$ separated}   First we checked all   lists of length $9$ with sum $0$, of Type A that are at most $4$ separated. 
This produced one more integral factorial ratio:  namely, 
$[4,6,9,24,-2,-3,-8,-12,-18]$.   Now we show that there are no further solutions.  We assume below that $\frak a$ is of Type B and at most $4$ separated.

 If all the elements of $\frak a$ are powers of $2$, then by Lemma \ref{lem3.3} we know that $N(\frak a) \ge 147/512 >\frac 14$.   Therefore there 
 must be some elements of $\frak a$ that are multiples of $3$, so that $\frak a$ is $3$--separated,  and we split $\frak a$ into the multiples of $3$ and the non-multiples of $3$, and 
 keep our usual meanings for $\frak b$ and $\frak c$.  
 
 Suppose $\ell(\frak b)=1$ and $\ell(\frak c)=8$.  Then $\frak b=[1]$ so that $B{\frak b}$ must be the multiple of $3$, and so $C\frak c$  (and hence $\frak c$) 
 consists only of powers of $2$.  
 From Lemma \ref{lem3.3} it follows that $N(\frak c) \ge 199/768$ whence $N(\frak a) \ge \frac 23 ( \frac{1}{12} + \frac{199}{768}) + \frac{1}{3} \frac{5}{24} > \frac 14$.  
 
 Next suppose $\ell(\frak b)=2$ and $\ell(\frak c)=7$.    If $\frak b \neq [1,-2]$ then $N(\frak b) \ge \frac{1}{9}$, and 
 $$ 
 N(\frak a) \ge \tfrac 23 \big( \tfrac 19 + \tfrac{5}{24} \big) + \tfrac 13 \tfrac 16 > \tfrac 14. 
 $$ 
 If $\frak b =[1,-2]$ then $B\frak b$ must be the multiples of $3$, and therefore $\frak c$ consists of just powers of $2$ and 
 $N(\frak c) \ge 89/384$ by Lemma \ref{lem3.3}.  Therefore 
 $$ 
 N(\frak a) \ge \tfrac 23 \big( \tfrac{1}{12} + \tfrac{89}{384} \big) + \tfrac 13 \tfrac 16 >\tfrac 14.
 $$

 Suppose $\ell(\frak b)=3$ and $\ell(\frak c)=6$.  Since $\frak a$ is of Type B, either $\frak b$ or $\frak c$ must be of Type B.  If 
 $\frak b$ is of Type B, then $N(\frak b) \ge \frac{17}{108}$, and we find 
$$ 
N(\frak a) \ge \tfrac 23 \big( \tfrac{17}{108} + \tfrac{17}{108} \big) + \tfrac 13 \tfrac 18 > \tfrac 14. 
$$ 
So we may assume that $\frak b$ is of Type A, and therefore $\frak c$ is of Type B.   Since $N(\frak b) \ge \frac 18$, we find that 
$N(\frak c) \le \frac{3}{16}$ (else one would have $N(\frak a) > \frac 14$), and by Lemma \ref{lem7.4} this forces $\frak c= [1,-2,-3,4,6,-12]$ with norm $\frac 16$.  
In turn, knowing $\frak c$ we deduce that $\frak b$ must have norm at most $\frac{7}{48}$.   From our work in Section 4.3, there are only $7$ possibilities for 
$\frak b$, and checking these possibilities we find no new factorial ratios.  

Finally it remains to consider the case where $\ell(\frak b)=4$ and $\ell(\frak c)=5$.  If both $\frak b$ and $\frak c$ are of Type B then 
$N(\frak b) + N(\frak c) \ge \frac 16 +\frac 5{24} = \frac 3{8}$ and so $N(\frak a) \ge \frac 23 \frac 38 +\frac{1}{36} > \frac 14$.
Suppose now that exactly one of $\frak b$ or $\frak c$ is of Type A and the other is of Type B.  In this case $\widetilde{\frak b} +\widetilde{\frak c}$ 
has odd length at least $3$ and so has norm $\ge \frac 18$.  Therefore 
$$ 
N(\frak a) \ge \tfrac 23 \min \big( \tfrac 19+ \tfrac 5{24}, \tfrac 16 +\tfrac 16 \big) + \tfrac 13 \tfrac 18 = \tfrac 23 \tfrac {23}{72} + \tfrac 1{24} >\tfrac 14. 
$$ 

 
This completes our classification of the integral factorial ratios of length $9$.

  \section{Proof of Theorem \ref{thm1.3}}

\subsection{Determining $G(n;d)$} We may clearly assume that $1\le d <n$.  First suppose that  
  $\frak a_1$, $\ldots$, $\frak a_{d+1}$ are $d+1$ lists whose lengths add up to $n$.  Then for a generic choice of 
  integers $x_1$, $\ldots$, $x_{d+1}$ we may form the list $\frak a = x_1 \frak a_1 + x_2 \frak a_2 + \ldots + x_{d+1} \frak a_{d+1}$, 
  which will typically be a list of length $n$.  Further if we choose $x_1$, $\ldots$, $x_{d+1}$ to be large coprime integers, then 
  the set of such $\frak a$ will escape any finite collection of subspaces of ${\Bbb R}^n$ of dimension at most $d$, and moreover one has 
  $$ 
  N(\frak a) \ge N(\frak a_1 )  + \ldots +N(\frak a_{d+1}) - \epsilon 
  $$ 
  for any $\epsilon >0$.   This argument shows that 
  $$ 
  G(n;d) \le \min_{\ell_1 + \ldots + \ell_{d+1} = n} \big( G(\ell_1)  +\ldots + G(\ell_{d+1}) \big). 
  $$ 
  
  Now we establish the reverse inequality, which would prove the first part of Theorem \ref{thm1.3}.   Let $\epsilon >0$ be given, and suppose $\frak a$ is a 
  list of length $n$, which we may assume (by omitting finitely many primitive lists)  is $k$ separated with $k\ge 1/\epsilon$ being large.   Thus there are 
  primitive smaller lists $\frak b$ and $\frak c$  with $N(\frak a) \ge (1-\epsilon) \big( N(\frak b) + N(\frak c)\big)$.  Let $b$ denote the length of $\frak b$, 
  and suppose that $r$ is the smallest integer with $N(\frak b) \le G(b;r) -\epsilon$.  Then $N(\frak b) \ge G(b;r-1) -\epsilon$, and moreover $\frak b$ must 
  lie in one of finitely many vector spaces of dimension at most $r$; say these vector spaces are $V_1$, $\ldots$, $V_R$, and these vector spaces depend only on $b$, $r$ and $\epsilon$.  Now suppose that $N(\frak c) \le G(n-b;d-r) -\epsilon$.   Then $\frak c$ must lie in one of finitely many vector spaces $W_1$, $\ldots$, $W_S$ (depending only on $n-b$, $d-r$ and $\epsilon$) of dimension at most $d-r$.  But then $\frak a = B \frak b + C \frak c$ will lie in one of finitely many vector spaces of dimension $\le d$; namely 
  in one of the vector spaces arising from the direct sum of $V_j$ and $W_\ell$.  In other words, if we know that $\frak a$ does not lie in one of these finitely many 
  subspaces of dimension $\le d$, then we must have 
  $$ 
  N(\frak a) \ge (1-\epsilon) \big( G(b;r-1)-\epsilon + G(n-b;d-r) -\epsilon \big) = G(b;r-1) + G(n-b;d-r) - O(\epsilon). 
  $$ 
  Removing the vector spaces of dimension $\le d$ that arise as above for all choices of $b$ and $r$, and we conclude that 
  $$ 
  G(n;d) \ge \min_{1 \le b <n} \min_{1\le r \le d} \big( G(b;r-1) + G(n-b;d-r)\big) - O(\epsilon). 
  $$ 
  Letting $\epsilon \to 0$, it follows that 
  $$ 
    G(n;d) \ge \min_{1 \le b <n} \min_{1\le r \le d} \big( G(b;r-1) + G(n-b;d-r)\big).
  $$ 
  By induction we conclude that 
   $$ 
  G(n;d) \ge \min_{\ell_1 + \ldots + \ell_{d+1} = n} \big( G(\ell_1)  +\ldots + G(\ell_{d+1}) \big). 
  $$

  To complete our discussion on $G(n;d)$, since $G(\ell) \ge 1/12$ for all $\ell \ge 1$, clearly $G(n;d) \ge (d+1)/12$, 
 for all $n\ge d+1$.  When $d+1 \le n\le 2d+2$, one can write $n$ using $d+1$ ones and twos, so that the equality 
 $G(n;d) = (d+1)/12$ holds here.  If $n \ge 2d+3$, then one of the $\ell_j$'s that sum to $n$ must be at 
 least $3$, and using Corollary \ref{cor3.5} we conclude that $G(n;d) \ge d/12 +1/9$.  
  
  \subsection{Bounding ${\widetilde G}(n;d)$}    When $d=0$ the stated bound holds trivially, and henceforth 
  assume that $d\ge 1$.  The proof of the lower bound is a variant of the argument given above for $G(n;d)$.  
  By omitting finitely many lists, we may assume that $\frak a$ (of length $n$ and $s(\frak a)=0$) 
  is $k$ separated with $k$ sufficiently large.  Then there are smaller lists $\frak b$ and $\frak c$ with lengths 
  $b$ and $c$ (with $n=b+c$) such that $N(\frak a) \ge (1-\epsilon) (N(\frak b) + N(\frak c))$, and $\frak a = B\frak b +C \frak c$.   
  Now there are two cases: either $s(\frak b)$ and $s(\frak c)$ are both non-zero, or $s(\frak b) = s(\frak c) =0$.

  Consider first the case when both $s(\frak b)$ and $s(\frak c)$ are non-zero.  Let $r$ be the smallest integer with 
  $N(\frak b) \le G(b;r) -\epsilon$ so that $N(\frak b) \ge G(b;r-1) -\epsilon$, and moreover $\frak b$ must lie in one 
  of finitely many vector spaces of dimension at most $r$.  Let these subspaces be $V_1$, $\ldots$, $V_R$ 
  and we can assume that each of these subspaces is not contained in the hyperplane $s(\frak x) =0$.  Now suppose $N(\frak c) 
  \le G(n-b;d-r+1)-\epsilon$.  Then $\frak c$ must lie in one of finitely many subspaces $W_1$, $\ldots$, $W_S$ of dimension 
  at most $d-r+1$.  But then $\frak a = B \frak b+ C\frak c$ lies in a vector space arising as 
  the direct sum of $V_j$ and $W_\ell$  intersected with the hyperplane $s(\frak x) =0$, and these subspaces 
  have dimension $\le r + d-r+1 -1 = d$.  This is exactly as in our previous argument, with the added benefit 
  of intersecting with the hyperplane $s(\frak x) =0$ which allows for dimension $\le d-r+1$ in the treatment of $\frak c$ instead 
  of our earlier $\le d-r$.  Summarizing, in the case when $s(\frak b)$ and $s(\frak c)$ are non-zero, after removing finitely many 
  vector spaces of dimension at most $d$, we can conclude that 
  $$ 
  N(\frak a) \ge \min_{b, r}  \big( G(b;r-1) + G(n-b;d-r+1) \big) - O(\epsilon) = G(n;d+1) - O(\epsilon), 
  $$ 
  by our work on $G(n;d)$ above.

  Now we turn to the second case where $s(\frak b)  = s(\frak c)=0$.  Let now $r$ denote the smallest integer 
  such that $N(\frak b) \le \widetilde{G}(b;r) -\epsilon$, so that $N(\frak b) \ge \widetilde{G}(b;r-1) -\epsilon$ 
  and $\frak b$ lies in one of finitely many vector spaces of dimension at most $r$.  Now if 
  $N(\frak c) \le \widetilde{G}(n-b;d-r) -\epsilon$, then $\frak c$ would have to lie in one of finitely many vector 
  spaces of dimension at most $d-r$, and we would be able to conclude that $\frak a = B\frak b + C \frak c$ 
  must lie in one of finitely many vector spaces of dimension at most $d$.  Thus, in this case we can conclude that 
  after removing finitely many vector spaces of dimension at most $d$, 
  $$ 
  N(\frak a) \ge \min_{b,r} \big( \widetilde{G}(b;r-1) + \widetilde{G}(n-b;d-r) \big). 
  $$ 
  
  Putting both arguments together and letting $\epsilon \to 0$ it follows that 
  $$
  \widetilde{G}(n;d) \ge \min_{b,r}  \big( G(n;d+1) , \widetilde{G}(b;r-1) + \widetilde{G}(n-b;d-r)\big). 
  $$ 
By induction one deduces that 
\begin{equation} 
\label{11.1} 
    \widetilde{G}(n;d) \ge \min \big( G(n;d+1), \min_{\ell_1 +\ldots +\ell_{d+1} =n} \widetilde{G}(\ell_1) + \ldots + \widetilde{G}(\ell_{d+1}  )\big).
\end{equation}

  Now the second term in the right side of \eqref{11.1} is only relevant when all the $\ell_j$ are at least $3$ (because there are no 
  non-degenerate lists of length $1$ or $2$ with sum $0$) .  Thus when $n\le 3d+3$, the bound is simply $\widetilde{G}(n;d) \ge G(n;d+1)$.  In 
  any event, the second term in the right side of \eqref{11.1} exceeds $(d+1)/9$, and so the bounds stated for $\widetilde{G}(n;d)$ for small 
  values of $n$ follow.

  \section{Proof of Theorem \ref{thm1.4}}  
  
  \noindent From \eqref{3.7} we already know that $G(n)$ is as large as the asymptotic stated in the theorem.  It 
  remains therefore to establish the upper bound.  The following lemma gives a natural construction of examples 
  built out of the Liouville function.  
   
 \begin{lemma} \label{lem12.1}   Given a positive integer $N$ denote by $\frak L(N)$ the 
 list with elements $\lambda(d) d$ where $d$ runs over the divisors of $N$ and $\lambda(d)$ is the 
 Liouville function $(-1)^{\Omega(d)}$.  Then $\frak L(N)$ is a list with $d(N)$ elements 
 and 
 $$ 
 N(\frak L(N))  =  \frac{d(N)}{12} f(N), 
 $$ 
 where $f(N)$ is a multiplicative function defined on prime powers $p^k$ by 
 $$ 
 f(p^k)  = 1  + 2 \sum_{j=1}^{k} \frac{(k+1-j)}{k+1} \frac{(-1)^j}{p^j}. 
 $$ 
 \end{lemma}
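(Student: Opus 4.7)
The plan is to combine the explicit norm formula \eqref{2.1} with a factorization of the resulting double sum over divisors. First I would observe that the list $\frak L(N)$ is non-degenerate: the elements $\lambda(d)d$ for $d \mid N$ all have distinct absolute values $d$, so no element and its negative both appear. Hence by \eqref{2.1},
\begin{equation*}
N(\frak L(N)) = \frac{1}{12} \sum_{d_1 \mid N} \sum_{d_2 \mid N} \lambda(d_1) \lambda(d_2) \frac{(d_1,d_2)^2}{d_1 d_2}.
\end{equation*}
Denote this double sum by $S(N)$. The goal then reduces to showing $S(N) = d(N) f(N)$.

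Next I would establish that $S$ is multiplicative. If $N = N_1 N_2$ with $(N_1, N_2) = 1$, then every divisor of $N$ factors uniquely as $d = e\, f$ with $e \mid N_1$ and $f \mid N_2$; moreover $\lambda(ef) = \lambda(e)\lambda(f)$ and $(e_1 f_1, e_2 f_2) = (e_1, e_2)(f_1, f_2)$. Substituting, the double sum separates as a product, giving $S(N_1 N_2) = S(N_1) S(N_2)$. Since $d(N)$ is also multiplicative and $f$ is defined multiplicatively, it suffices to verify $S(p^k) = (k+1) f(p^k)$ for each prime power.

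The main step is the prime power computation. For $N = p^k$, the divisors are $p^j$ with $0 \le j \le k$, we have $\lambda(p^j) = (-1)^j$, and $(p^{j_1}, p^{j_2})^2/(p^{j_1}p^{j_2}) = p^{-|j_1 - j_2|}$. Thus
\begin{equation*}
S(p^k) = \sum_{j_1, j_2 = 0}^{k} (-1)^{j_1+j_2}\, p^{-|j_1-j_2|}.
\end{equation*}
Separating diagonal and off-diagonal terms, and grouping the off-diagonal terms by $j = |j_1 - j_2| \ge 1$ (for which there are $k+1-j$ unordered pairs, and $(-1)^{j_1+j_2} = (-1)^j$), I obtain
\begin{equation*}
S(p^k) = (k+1) + 2\sum_{j=1}^{k} (k+1-j)\frac{(-1)^j}{p^j} = (k+1)\Bigl(1 + 2\sum_{j=1}^{k}\frac{k+1-j}{k+1}\frac{(-1)^j}{p^j}\Bigr),
\end{equation*}
which is exactly $d(p^k)\, f(p^k)$. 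Combined with multiplicativity, this gives the claimed identity.

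I do not anticipate a serious obstacle here: the proof is essentially a direct calculation, and the only thing to be slightly careful about is the non-degeneracy of $\frak L(N)$ and the clean factorization of $(d_1,d_2)^2/(d_1 d_2)$ into prime-power pieces, both of which are straightforward.
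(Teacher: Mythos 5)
Your proposal is correct and follows essentially the same route as the paper: apply the norm formula \eqref{2.1}, factor the double divisor sum by multiplicativity, and evaluate the local factor at $p^k$ by grouping terms according to $|j_1-j_2|$ (the paper writes the summand as $p^{2\min(r,s)-r-s}$, which is the same as your $p^{-|j_1-j_2|}$). Your explicit remarks on non-degeneracy and on the gcd factorization are fine additions but do not change the argument.
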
  
 \begin{proof}  From \eqref{2.1} we have 
$$ 
N(\frak L(N)) = \frac{1}{12} \sum_{a, b|N} \lambda(a) \lambda(b) \frac{(a,b)^2}{ab} = \frac 1{12} \prod_{p^k \Vert N} \Big( \sum_{r, s= 0}^{k} (-1)^{r+s} 
\frac{p^{2\text{min}(r,s)}}{p^{r+s}} \Big), 
$$  
 where the second relation follows from multiplicativity, denoting by $p^k$ the exact power of $p$ dividing $N$, and by $p^r$ and $p^s$ the 
 corresponding powers of $p$ dividing $a$ and $b$.   Now 
 $$ 
 \sum_{r,s= 0}^k  (-1)^{r+s} 
\frac{p^{2\text{min}(r,s)}}{p^{r+s}}  = (k+1) + 2\sum_{j=1}^{k} \sum_{0\le r \le k-j} \frac{(-1)^j}{p^j} = (k+1) f(p^k), 
 $$
 and the lemma follows.  
 \end{proof}

 Let $k$ be large, and define $N_k$ as follows:
 $$ 
 N_k = \prod_{p\le k} p^r \qquad \text{  with  } r = \lfloor 2^{k/\pi(k)} \rfloor. 
 $$ 
A little calculation using the prime number theorem gives  $2^k \le d(N_k)  = (r+1)^{\pi(k)} < 2^{k+1}$.  
If we denote $d(N_k)$ by $n(k)$, then Lemma \ref{lem12.1} establishes that 
$$ 
G(n(k)) \le \frac{n(k)}{12} \prod_{p\le k} f(p^r) =\frac{n(k)}{12} \prod_{p\le k} \Big( 1- \frac{2}{p+1}  + O\Big( \frac{1}{r p} \Big) \Big) 
\sim \frac{n(k)}{12} \frac{\pi^2 e^{-2\gamma}}{6 (\log \log n(k))^2}. 
$$ 
Thus the asymptotic upper bound holds for a sequence $n(k)$ with $n(k) \in [2^k, 2^{k+1})$ for each large $k$.   For a general 
large $n$, use a greedy procedure to express $n$ as a sum of elements from the sequence $n(k)$ up to some bounded error.  The 
result now follows from the sub-additivity of the function $G$ (see Proposition \ref{prop3.1}).

 \section{Proof of Theorem \ref{thm1.2}}

 \noindent Theorem \ref{thm1.2} can be deduced easily from our other results.   Let $\frak a$ be a list of length $K+L$ 
 corresponding to a factorial ratio with $K$ terms in the numerator and $L=K+D$ terms in the denominator.  Then we must 
 have, using Theorem \ref{thm1.4},   
 $$ 
\frac{D^2}{4} \ge N(\frak a) \ge G(K+L) \sim \frac{\pi^2}{72 e^{2\gamma}} \frac{(K+L)^2}{(\log \log (K+L))^2}. 
$$ 
From this, part 1 of Theorem \ref{thm1.2} follows.  Note that one can also use the explicit bound in \eqref{3.6} to calculate 
numerical bounds for $K+L$ for any specified value of $D$.

 Theorem \ref{1.3} shows that ${\widetilde G}(K+L;3D^2-1) \ge (3D^2+1)/12 = D^2/4 +1/12$.   Thus if $\frak a$ of length $K+L$ 
 does not lie in finitely many vector spaces of dimension at most $3D^2-1$ then $N(\frak a) > D^2/4$, which proves part 2 of the theorem.  
 
Corollary \ref{cor3.5} shows that $G(n) > 1$ for $n\ge 82$, so that there are no factorial ratios with $D=2$ and $K+L\ge 82$.  Further the 
numerical table produced for Corollary \ref{cor3.5} also shows that $G(n;1) >1$ for $n\ge 76$, which proves the last assertion of Theorem \ref{thm1.2}.

  \ \

 \end{document}